\newtheorem{theorem}{Theorem}
\newtheorem{lemma}[theorem]{Lemma}
\newtheorem{question}[theorem]{Question}
\newtheorem{corollary}[theorem]{Corollary}
\newtheorem{proposition}[theorem]{Proposition}
\theoremstyle{definition}
\newtheorem*{example}{Example}
\newtheorem*{remark}{Remark}
\title{Ends of finite volume, nonpositively curved manifolds}
\author{Grigori Avramidi}
\def\ra{\rightarrow}
\def\beqa{\begin{eqnarray}}
\def\eeqa{\end{eqnarray}}
\def\beqa{\begin{eqnarray}}
\def\eeqa{\end{eqnarray}}
\DeclareMathOperator{\SL}{SL}
\DeclareMathOperator{\SO}{SO}
\begin{document}
\begin{abstract}
We study complete, finite volume $n$-manifolds $M$ of bounded nonpositive sectional curvature. A classical theorem of Gromov says that if such $M$ has {\it negative} curvature then it is homeomorphic to the interior of a compact manifold-with-boundary, and we denote this boundary $\partial M$. If $n\geq 3$, we prove that the universal cover of the boundary $\widetilde{\partial M}$ and also the $\pi_1M$-cover of the boundary $\partial\widetilde M$ have vanishing $(n-2)$-dimensional homology. For $n=4$ the first of these recovers a result of Nguy$\tilde{\hat{\mathrm{e}}}$n Phan saying that each component of the boundary $\partial M$ is aspherical. For any $n\geq 3$, the second of these implies the vanishing of the first group cohomology group with group ring coefficients $H^1(B\pi_1M;\mathbb Z\pi_1M)=0$. A consequence is that $\pi_1M$ is freely indecomposable. These results extend to manifolds $M$ of bounded nonpositive curvature if we assume that $M$ is homeomorphic to the interior of a compact manifold with boundary.
Our approach is a form of ``homological collapse'' for ends of finite volume manifolds of bounded nonpositive curvature. 
This paper is very much influenced by earlier, yet still unpublished work of Nguy$\tilde{\hat{\mathrm{e}}}$n Phan.

\end{abstract}
\maketitle

\section*{Introduction}
A classical theorem of Gromov \cite{gromovnegativelycurved} says that a complete, finite volume Riemannian $n$-manifold of bounded negative curvature ($-1<K<0$) is homeomorphic to the interior of a compact manifold with boundary $\partial M$. \begin{question} 
What can one say about the topology of the boundary $\partial M$? 
\end{question}
If $M$ has pinched negative curvature ($-1<K<-\varepsilon<0$), then $\partial M$ is homeomorphic to an infranil manifold\footnote{In particular, $\partial M$ is aspherical and its fundamental group contains a finite index nilpotent subgroup.} \cite{buserkarcher, eberleinlattices}. However, if the curvature is not pinched there are many more possibilities for $\partial M$   \cite{abreschschroeder,belegradek,fujiwara,nguyenphanfinite,nguyenphansol}, and the general picture is unclear, and one has only a few restrictions on the topology of the boundary (the Euler characteristic, $L^2$-Betti numbers\footnote{Here we mean the $L^2$-Betti numbers of the $\pi_1M$-cover of the boundary.} \cite{cheegergromovbounds} and simplicial volume \cite{gromovfilling} must vanish). Recently, T$\hat{\mathrm{a}}$m Nguy$\tilde{\hat{\mathrm{e}}}$n Phan discovered new constraints on the topology of the boundary in dimension four. In unpublished (forthcoming?) work, she shows that in dimension $n=4$ each component of the boundary $\partial M$ is an aspherical $3$-manifold, and goes on to give more information about which aspherical $3$-manifolds do and which do not occur as boundaries.  
\begin{theorem}[Nguy$\tilde{\hat{\mathrm{e}}}$n Phan]
\label{tamtheorem}
Let $M$ be a complete, finite volume, Riemannian $4$-manifold of bounded negative curvature $-1<K<0$. Then each connected component of $\partial M$ is aspherical. 
\end{theorem}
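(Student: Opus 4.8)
The plan is to reduce the asphericity of a boundary $3$-manifold to a single homology vanishing statement about the universal cover of the cross-section of an end, and then to prove that vanishing by collapsing the end onto the nerve of a cover by convex sets coming from the Margulis lemma.

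\textbf{Reduction.} Let $N$ be a connected component of $\partial M$ and let $E\subseteq M$ be a corresponding end neighbourhood, so $E$ is homeomorphic to $N\times[0,\infty)$ and $N$ is a closed $3$-manifold. Write $X=\widetilde M$, a Hadamard manifold with $-1<K<0$, and $\Gamma=\pi_1 M$, acting freely with finite covolume. By Gromov's theorem together with the thick--thin decomposition (the Margulis lemma being available since $|K|\le 1$), after deleting a compact thick part $M$ is a disjoint union of cusp neighbourhoods, one per end; the cusp containing $N$ has fundamental group $\Gamma_0:=\pi_1 N$, which injects into $\Gamma$, is virtually nilpotent, fixes a unique point $\xi\in\partial X$, and has no fixed point in $X$. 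A group acting on a Hadamard manifold without a fixed point is infinite, so $\pi_1 N$ is infinite and $\widetilde N$ is a non-compact simply connected $3$-manifold; such a manifold has $H_1=H_3=H_{\ge 4}=0$, so it is contractible as soon as $H_2(\widetilde N;\Z)=0$, and then $N=\widetilde N/\pi_1 N$ is aspherical. Finally, since the cusp is $\pi_1$-injective, the component $\widetilde E_0$ of the preimage of $E$ in $X$ stabilised by $\Gamma_0$ is the universal cover of $E$, hence $\widetilde E_0\cong\widetilde N\times[0,\infty)$ is homotopy equivalent to $\widetilde N$. Thus it suffices to show
\[
H_2(\widetilde E_0;\Z)=0 .
\]

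\textbf{The collapse.} For a small Margulis constant $\varepsilon>0$ one has $\widetilde E_0=\bigcup_{\gamma\in\Gamma_0\setminus\{1\}}U_\gamma$ with $U_\gamma:=\{x\in X: d(x,\gamma x)<2\varepsilon\}$. Each displacement function $x\mapsto d(x,\gamma x)$ is convex on the Hadamard manifold $X$, so every $U_\gamma$, and every finite intersection of $U_\gamma$'s, is convex, hence contractible or empty; by the nerve lemma $\widetilde E_0$ is homotopy equivalent to the nerve $\mathcal N$ of the cover $\{U_\gamma\}$. I would then try to contract $\mathcal N$ by exhibiting a vertex that spans a simplex with every simplex of $\mathcal N$. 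Every $\gamma\in\Gamma_0\setminus\{1\}$ is parabolic fixing $\xi$ and so preserves the horoballs at $\xi$; moreover the end $E$ converges to $\xi$, i.e. the Busemann function $b_\xi$ (which descends to $E$) tends to $-\infty$ along $E$. Choosing a nonzero $\gamma_0$ in the centre of a finite-index nilpotent subgroup of $\Gamma_0$, so that $U_{\gamma_0}$ is $\Gamma_0$-invariant, the scheme is to prove (i) that the deep part $\widetilde E_0\cap\{b_\xi\le c\}$ lies inside $U_{\gamma_0}$ for $c$ sufficiently negative, and (ii) that every nonempty finite intersection $U_{\gamma_1}\cap\cdots\cap U_{\gamma_k}$ contains points with $b_\xi$ arbitrarily negative. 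Given (i) and (ii), any simplex of $\mathcal N$ extends by $\gamma_0$, so $\mathcal N=\mathrm{St}(\gamma_0)$ is a cone; hence $\mathcal N$ is contractible, $H_2(\widetilde E_0)=0$, and in fact $\widetilde N$ is contractible and $N$ aspherical.

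\textbf{The main obstacle.} The difficulty is entirely in (i)--(ii). These are essentially automatic when the cusp is standard (e.g. under pinched curvature, where $\widetilde E_0$ is a horoball at $\xi$ and $d(x,\gamma x)\to 0$ as $x\to\xi$), but in variable, unpinched negative curvature the cusp may be non-standard, as in the examples of Abresch--Schr\"oder and others: $\widetilde E_0$ need not be convex, $\Gamma_0$ need not act cocompactly on a horosphere, and parabolic elements can have large displacement along horospheres. Controlling the functions $d(\cdot,\gamma\cdot)$ deep in the cusp in this generality -- using the full strength of the bound $-1<K<0$ together with $\vol M<\infty$ -- is the crux. I expect that in general one can only push a chain-level, $H_2$-version of this collapse through (rather than a genuine contraction of $\mathcal N$), producing exactly the conclusion $H_2(\widetilde E_0)=0$ that the general ``homological collapse'' provides; by the reduction above this is already enough to conclude that every component of $\partial M$ is aspherical.
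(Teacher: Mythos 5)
There is a genuine gap at the very first step of your reduction. You assert that the cusp group $\Gamma_0=\pi_1 N$ is virtually nilpotent and fixes a single point $\xi\in\partial X$, so that ``every $\gamma\in\Gamma_0\setminus\{1\}$ is parabolic fixing $\xi$.'' This is true only under \emph{pinched} negative curvature $-1<K<-\varepsilon<0$, where Gromov's theorem really does make the cusp a horoball quotient and $N$ an infranilmanifold. Under the hypotheses of the theorem ($-1<K<0$, no pinching), the cusp cross-section $N$ need not be an infranilmanifold and $\pi_1 N$ need not be virtually nilpotent at all -- this is the whole point of the examples of Abresch--Schr\"oder, Fujiwara, Nguy$\tilde{\hat{\mathrm{e}}}$n Phan (Sol manifolds, etc.) that the introduction cites. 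The Margulis lemma only gives you that $\Gamma_x(\varepsilon)=\langle\gamma: d(x,\gamma x)<\varepsilon\rangle$ is almost nilpotent for each \emph{fixed} $x$; as $x$ moves through the cusp these groups vary, generate $\Gamma_0$ collectively, and do not share a common fixed point at infinity. Consequently there is no single $\gamma_0$ whose sublevel set meets every $U_{\gamma_1}\cap\dots\cap U_{\gamma_k}$, and your plan of contracting the nerve $\mathcal N$ to the star of a single vertex cannot even get started. You correctly flag (i)--(ii) as the obstacle, but the deeper issue is that the preceding structural claims they rest on are already false.

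This is exactly the difficulty the paper's machinery is built to circumvent: instead of one nilpotent group and one horosphere, the argument tracks a whole family of nilpotent groups $\Gamma_\sigma$ attached to $\varepsilon$-patches, organizes them into maximal clumps whose associated nilpotent groups have strictly growing rank along chains of inclusions (Corollary \ref{growingrank}), proves a local vanishing statement for each clump by projecting to a horosphere adapted to \emph{that} clump's central parabolic (Lemma \ref{lvl}, using Proposition \ref{homologylemma}), and then assembles these local statements via the Mayer--Vietoris spectral sequence on the unfolding space $^uL$ (subsection \ref{parabolicunfolding}). No single vertex cones off the nerve; rather, the combinatorics of the reduced nerve together with the rank-growth forces all $E^2$ terms in degrees $\geq n-2$ to vanish. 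In dimension $n=4$ this gives $H_{\geq 2}(\widetilde{\partial M})=0$, which together with your (correct) observation that a noncompact simply connected $3$-manifold with vanishing $H_2$ is contractible yields asphericity of each boundary component. Your reduction paragraph apart from the virtual-nilpotence claims is fine and is essentially the remark in the paper following Theorem \ref{maintheorem3}; the missing content is the homological collapse itself, which cannot be reached by the single-horosphere cone argument you propose.
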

Inspired by this result, we find extensions of it, phrased in somewhat more homological form, to manifolds of dimension $n\geq 4$. Our methods also extend to 
bounded nonpositive curvature. In contrast to manifolds of bounded negative curvature, these manifolds are not always tame\footnote{Infinite type graph product examples are constructed in section 5 of \cite{gromovnegativelycurved}.}, so in this case we assume tameness\footnote{A manifold $M$ is called {\it tame} if it is the interior of a compact manifold with boundary.}
 as part of the hypotheses. Denote by $\partial\widetilde M$ the $\pi_1M$-cover of the boundary\footnote{Equivalently, it is the boundary of the universal cover $\widetilde M\ra M$, which justfies the notation $\partial\widetilde M$. 
}. Here is our main result.
\begin{theorem}
\label{maintheorem2}
Let $M$ be a complete, finite volume, Riemannian $n$-manifold, $n>2$, of bounded nonpositive curvature $-1<K\leq 0$. If $M$ is homeomorphic to the interior of a compact manifold with boundary $\partial M$ then 
\begin{equation}
\label{nocoeff}
H_{\geq n-2}(\partial\widetilde M)=0.
\end{equation}
\end{theorem}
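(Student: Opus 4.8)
The plan is to push the problem onto the ends, decompose them into Margulis regions, and finish with a Bass--Serre / cohomological-dimension computation; the geometric ingredients are the Cartan--Hadamard theorem, the Margulis lemma for bounded nonpositive curvature, and finiteness of volume. \emph{Step 1: reduction to the cover of a single boundary piece.} The covering $\partial\widetilde M\ra\partial M$ restricted to a boundary component $B$ is the connected cover $\widehat B$ of $B$ determined by $K_B:=\ker(\pi_1B\ra\pi_1M)$, so $\partial\widetilde M$ is a disjoint union of copies of the various $\widehat B$'s, and it suffices to prove $H_{\geq n-2}(\widehat B)=0$ for each $B$. Now $\widehat B$ is a connected, oriented, \emph{open} $(n-1)$-manifold: it would be compact only if the image of $\pi_1B$ in $\pi_1M$ were finite, whereas far out in the end over $B$ there are (by definition of the $\varepsilon$-thin part, which contains the ends for small $\varepsilon$) loops that are short and nontrivial in $\pi_1M$, while $\pi_1M$ is torsion-free because $M$ is aspherical by Cartan--Hadamard, so the image is infinite. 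Hence $H_{n-1}(\widehat B)=0$ automatically and the content is $H_{n-2}(\widehat B)=0$, equivalently $H^1_c(\widehat B)=0$ by Poincar\'e duality on $\widehat B$; this ``codimension one'' character is what the exponent $n-2$ records.

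\emph{Step 2: structure of the ends.} By Bishop--Gromov and finiteness of volume the injectivity radius tends to $0$ along the ends, so they lie in the $\varepsilon$-thin part for small $\varepsilon$, where by the Margulis lemma the loops of length $<\varepsilon$ at each point generate a torsion-free virtually nilpotent subgroup of $\pi_1M$. The homological-collapse input I would establish is that the end over $B$ decomposes as a graph of ``Margulis regions'' $R_1,\dots,R_m$ glued along $\pi_1$-injective aspherical pieces (infranilmanifolds, typically tori), in which each $R_i$ is aspherical and $\pi_1$-injective in $M$ --- the latter because the universal cover of $R_i$ is contractible, deformation retracting onto a convex invariant subset of $\widetilde M$ (a horoball, or a neighbourhood of a flat) attached to the virtually nilpotent group $\Gamma_i\leq\pi_1M$ of $R_i$. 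A graph of aspherical spaces glued along $\pi_1$-injective aspherical subspaces is aspherical, so $B$ is aspherical and $\widetilde B$ is contractible.

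\emph{Step 3: Bass--Serre and conclusion.} By Step 2, $\pi_1B$ is the fundamental group of a graph of groups whose vertex and edge groups --- the $\pi_1R_i$ and the fundamental groups of the gluing pieces --- all embed in $\pi_1M$. Since $K_B$ is normal in $\pi_1B$, its intersection with any conjugate of a vertex or edge group lies in the kernel of the injective map of that group into $\pi_1M$, hence is trivial; so $K_B$ acts freely on the Bass--Serre tree of the decomposition and is therefore a free group, whence $\cd K_B\leq 1$. As $B$ is aspherical, $\widehat B$ is aspherical with $\pi_1\widehat B=K_B$, so $\widehat B\simeq K(K_B,1)\simeq\bigvee S^1$, giving $H_{\geq 2}(\widehat B)=0$ and hence \eqref{nocoeff} when $n\geq 4$. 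For $n=3$ the boundary pieces are closed surfaces that are thin-end cross-sections, hence tori or Klein bottles, which (being virtually abelian) are single Margulis regions; then $K_B$ is trivial by the injectivity in Step 2, $\widehat B=\widetilde B\cong\R^2$, and $H_{\geq 1}(\widehat B)=0$.

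\emph{The main obstacle.} Steps 1 and 3 are essentially formal; the entire weight of the argument is Step 2 --- establishing the decomposition of the ends into $\pi_1$-injective aspherical Margulis regions, glued graph-fashion, in the full generality of bounded nonpositive curvature, without a pinching hypothesis. This is where the non-pinched phenomena (Abresch--Schroeder, Belegradek, Nguy$\tilde{\hat{\mathrm{e}}}$n Phan, and others: the region groups $\Gamma_i$ interact, $\pi_1B$ need not be virtually nilpotent, $B$ can be a nontrivial graph manifold or carry a $\Sol$-piece) must be confronted, and one needs both the geometry of a single region (that it is, up to the equivalence relevant for the homology in question, a convex quotient, hence aspherical and $\pi_1$-injective) and the combinatorics of how the regions fit together (that the nerve of the decomposition is a graph, with $\pi_1$-injective overlaps). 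I expect that in this generality one does not get a clean geometric decomposition but only a homological surrogate for it --- a Mayer--Vietoris pattern over the thin part sufficient to run the Bass--Serre bookkeeping at the level of homology --- and that turning ``the injectivity radius collapses along the ends'' into such a surrogate, uniformly and $\pi_1M$-equivariantly, is the hard step.
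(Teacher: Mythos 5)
Your Step~2 is the crux, as you note, but the claim it makes is actually false in general, and this breaks the whole argument. You assert that each boundary component $B$ is aspherical (a graph of aspherical $\pi_1$-injective Margulis regions), and Step~3 then concludes that $K_B=\ker(\pi_1B\to\pi_1M)$ is free, so $\widehat B\simeq\bigvee S^1$ and $H_{\geq 2}(\widehat B)=0$. This is much stronger than the theorem and is contradicted by the examples discussed in the paper itself: for $n\geq 6$ the boundary need not be aspherical (the cartesian product of three once-punctured tori is a complete finite-volume $6$-manifold of bounded nonpositive curvature whose boundary is not aspherical), and for a product of $q$ cusped hyperbolic surfaces the cover $\partial\widetilde M$ is homotopy equivalent to an infinite wedge $\bigvee S^{q-1}$, so for $q=3$ one has $H_2(\partial\widetilde M)\neq 0$. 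Thus no clean graph-of-groups decomposition with $\pi_1$-injective aspherical pieces exists, and the free-group / $\cd\leq 1$ conclusion for $K_B$ cannot hold. Even in dimension $4$, where the boundary \emph{is} aspherical (Nguy$\tilde{\hat{\mathrm{e}}}$n Phan), the proof is not via Bass--Serre bookkeeping: the ``Margulis regions'' overlap in ways with no vertex/edge structure, and their interiors are not convex quotients, so the $\pi_1$-injectivity you invoke is not available either.

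The ``homological surrogate'' you anticipate at the end is indeed what the paper does, but its shape is quite different from Bass--Serre. The actual mechanism is a local vanishing lemma: for a clump of patches preserved by a rank-$r$ nilpotent group $N$ containing a parabolic (resp.\ semisimple) element, one projects to a horosphere (resp.\ minset) to get a homotopy-equivalent open subset of $\mathbb R^{n-1}$ on which $N$ acts homotopically trivially, and the Leray--Serre spectral sequence of $W\to(W\times\R^{n-1})/N\to\R^{n-1}/N$ forces $H_{\geq n-1-r}(W)=0$. These local vanishings are then assembled over the nerve of a cover by maximal clumps using a Mayer--Vietoris spectral sequence, together with a ``growing ranks'' property (ranks of the nilpotent groups strictly increase as clumps intersect). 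The technical difficulty is that the $\varepsilon$-sublevel sets are not invariant under the nilpotent groups, which is handled by thickening to larger invariant sublevel sets and then constructing an ``unfolding space'' to undo the excess overlap; a separate semisimple reduction handles minsets. None of this yields asphericity or a free group; it yields exactly the homological bound $H_{\geq n-2}$, which is sharp. Your Step~1 (noncompactness of $\widehat B$ kills $H_{n-1}$) is fine and matches the paper's implicit use, but Steps~2 and~3 need to be discarded and replaced with this kind of horospherical-projection / spectral-sequence machinery.
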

The cover $\partial\widetilde M\ra\partial M$ might not be connected or simply connected. However, the same methods also prove a homology vanishing result for the universal cover of the boundary $\widetilde{\partial M}$.
\begin{theorem}
\label{maintheorem3}
With the same hypotheses as in Theorem \ref{maintheorem2}, we have
\begin{equation}
\label{coeff}
H_{\geq n-2}(\widetilde{\partial M})=0.
\end{equation}
\end{theorem}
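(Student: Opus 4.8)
The plan is to re-run the proof of Theorem~\ref{maintheorem2} with the universal cover of a boundary component in place of its $\pi_1M$-cover, after first recasting that argument so that it takes place entirely inside an end of $M$. Since homology respects disjoint unions it suffices to fix one connected component $N$ of $\partial M$ and prove $H_{\geq n-2}(\widetilde N)=0$. Using tameness, let $E\cong N\times[0,1)$ be a collar-type end of $M$ adjacent to $N$; the inclusion of the boundary makes $N$ a deformation retract of $E$, so $\widetilde E\simeq\widetilde N$, and after discarding a compact collar we may assume the ``deep end'' estimates used in the proof of Theorem~\ref{maintheorem2} already hold on all of $E$. In particular, since the thick part of a complete finite-volume manifold with $-1<K\le 0$ is compact (finitely many disjoint balls of fixed size would otherwise exhaust infinite volume), we may assume the injectivity radius of $M$ is everywhere below the Margulis constant on $E$, so that every point of $E$ lies on a short geodesic loop.

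By the Margulis lemma for bounded nonpositive curvature, the short loops based at a point of $E$ generate a virtually nilpotent group carrying a compact orbit of positive dimension and small diameter, and these orbits assemble into the positive-rank ``homological collapse'' of the end that drives Theorem~\ref{maintheorem2}; that proof uses this collapse to produce a vanishing in degrees $\geq n-2$ for the homology of (the relevant cover of) the end. I would carry out the identical construction over $\mathbb Z\pi_1N$. The only new point to verify is that the short geodesic loops furnishing the collapse represent nontrivial elements of $\pi_1N$: a geodesic loop that is contractible in $M$ lifts to a geodesic of the $\CAT(0)$ space $\widetilde M$ from a point to itself and is therefore constant, while any loop that is contractible in the end $E$ is a fortiori contractible in $M$; hence every short geodesic loop in $E$ is essential in $E$, i.e.\ in $N$, and so lifts to a nontrivial deck translation of $\widetilde E\simeq\widetilde N$. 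Granting this, the chain-level collapse of Theorem~\ref{maintheorem2} runs over $\mathbb Z\pi_1N$ and gives $H_{\geq n-2}(\widetilde E)=H_{\geq n-2}(\widetilde N)=0$.

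The crux is to make the proof of Theorem~\ref{maintheorem2} \emph{intrinsic} to the end. In that theorem one may legitimately use that the $\pi_1M$-cover of the end embeds in the contractible $\CAT(0)$ manifold $\widetilde M$, so ambient tools are available --- for instance Poincar\'e--Lefschetz duality in $\widetilde M$, which trades $H_{n-2}(\partial\widetilde M)$ for $H^1_c(\widetilde M)$. For Theorem~\ref{maintheorem3} no such ambient space is present: $\widetilde E$ need not be contractible and does not embed in $\widetilde M$, so the collapse must be used directly, as an honest $\mathbb Z\pi_1N$-chain contraction built from the nilpotent structure on $E$ together with the fact that its orbit loops are $\pi_1N$-essential. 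The delicate bookkeeping is exactly at the singular strata of that structure, where the orbit dimension jumps: one must check that the degree-$(\geq n-2)$ contraction can be assembled across these strata, uniformly over the noncompact end, with no reference to $\widetilde M$. Once the proof of Theorem~\ref{maintheorem2} is organized in this end-intrinsic way, replacing the $\pi_1M$-cover by the universal cover is automatic and yields Theorem~\ref{maintheorem3}.
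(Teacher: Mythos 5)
Your high-level strategy---re-run the Theorem~\ref{maintheorem2} argument with $\mathbb Z\pi_1N$ (equivalently $\mathbb Z\pi_1E$) coefficients in place of $\mathbb Z$---is indeed what the paper does. But you have misidentified where the difficulty lies, and the actual technical obstruction is missing from your proposal.

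Your diagnosis is that the Theorem~\ref{maintheorem2} proof leans on ambient tools in $\widetilde M$ such as Poincar\'e--Lefschetz duality, and that the work is in making the argument ``end-intrinsic.'' That is off the mark: the proof of Theorem~\ref{maintheorem2} does not use Poincar\'e duality (that appears only in Corollary~\ref{H^1}), and the geometry---horospherical projections, minsets, the Margulis lemma---continues to take place in $\widetilde M$ in the proof of Theorem~\ref{maintheorem3} as well; only the coefficient module changes. Likewise, your point that short geodesic loops in $E$ are essential and hence lift to nontrivial deck translations of $\widetilde E$ is true but not where the subtlety is: the relevant isometries are nontrivial covering translations of $\widetilde M$ by construction.

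The genuine gap is in the local vanishing step. Proposition~\ref{homologylemma} is proved by exhibiting a \emph{homotopically trivial} action of a rank-$r$ nilpotent group $N$ on an open $W\subset\mathbb R^{n-1}$ and running the Serre spectral sequence for $W\to(W\times\mathbb R^{n-1})/N\to\mathbb R^{n-1}/N$. To get the statement with $\mathbb Z\pi_1E$ coefficients one needs the corresponding conclusion for a regular cover $\hat W\to W$, which requires the $N$-action to lift to a homotopically trivial action on $\hat W$ commuting with the deck group. This is \emph{not} automatic for a homotopically trivial action: a $\mathbb Z/p$-rotation of $S^1$ is homotopically trivial but does not lift to an action on $\mathbb R$. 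The paper's Proposition~\ref{covervanish} supplies exactly this lifting, and the lift exists only because the $N$-action preserves a good cover of $W$ by contractible sets with contractible intersections (coming from the projected patches $p(L^z_i)$), so that the action can be defined sheet-by-sheet and checked to be well defined and $G$-equivariant. Your proposal does not address this, and without it the step $H_{\geq n-1-r}(Y^z_\alpha;\mathbb Z\pi_1E)=0$ is unjustified. (The other local lemma, Lemma~\ref{semisimplevanish}, upgrades for free because its proof actually shows $Y^k_\alpha$ is homotopy equivalent to an $(n-2-r)$-complex.) Everything else in your outline---Mayer--Vietoris assembly, the reduction to minsets, working one component at a time---does carry over formally, as the paper notes.
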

For $n=4$ this implies each component of the boundary is aspherical, recovering Theorem \ref{tamtheorem}.
\begin{remark}
The reader should not confuse $\partial\widetilde M$ and $\widetilde{\partial M}$.\footnote{The later only appears in section \ref{coversandcoefficients}.} 
\end{remark}
\begin{example}
Finite volume locally symmetric manifolds of non-compact type are prominent examples of manifolds to which Theorem \ref{maintheorem2} applies, but for which much finer information about the end is already known. 
If $M$ is such a locally symmetric manifold then (see e.g. \cite{pettetsouto})
\begin{itemize}
\item
$M$ is the interior of a compact manifold-with-boundary, and 
\item
either $M$ is a closed manifold or there is a positive number $q$ so that the $\pi_1M$-cover of the boundary $\partial\widetilde M$ is homotopy equivalent to an infinite wedge of $(q-1)$-spheres $\vee S^{q-1}$. 
\end{itemize}
A simple example is a cartesian product of $q$ non-compact, complete, finite volume, hyperbolic surfaces. A more interesting example is the quotient of $\SL_{q+1}\mathbb R/\SO(q+1)$ by a finite index torsion free subgroup of $\SL_{q+1}\mathbb Z$. Theorem \ref{maintheorem2} shows that a bit of the structure of the end possessed by these manifolds remains even in the absence of symmetry on the universal cover. 
\end{example}


\subsection*{Applications}
We can use Poincare duality to rephrase (\ref{nocoeff}) in a way that does not depend on the dimension of $M$ and only involves the fundamental group $\pi:=\pi_1M$. Let $\overline M$ be the compact $m$-manifold-with-boundary with interior $M$ and boundary $\partial M$. If $m\geq 3$ then 
$$
0=H_{m-2}(\partial\widetilde{M})\cong H_{m-1}(\widetilde{\overline M},\partial\widetilde{ M})\cong H^1_c(\widetilde{\overline M})\cong H^1(\overline M;\mathbb Z\pi)\cong H^1(B\pi;\mathbb Z\pi).
$$ 
If $N$ is {\it any} closed $n$-manifold with fundamental group $\pi$, then $H_{n-1}(\widetilde N)\cong H^1_c(\widetilde N)\cong H^1(N;\mathbb Z\pi)\cong H^1(B\pi;\mathbb Z\pi)=0$ because we can add cells of dimension $\geq 3$ to turn $N$ into a $B\pi$, and this does not affect the first cohomology group. In summary, we get the following corollary.
\begin{corollary}
\label{H^1}
Suppose $\pi$ is the fundamental group of a tame, finite volume, complete Riemannian manifold of bounded nonpositive curvature and of dimension $\geq 3$. Then $H^1(B\pi;\mathbb Z\pi)=0$. Consequently, if $N$ is any closed $n$-manifold with fundamental group $\pi$ then
$H_{n-1}(\widetilde N)=0$.
\end{corollary}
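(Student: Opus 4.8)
The plan is to turn the homological vanishing supplied by Theorem~\ref{maintheorem2} into the asserted vanishing of $H^1(B\pi;\Z\pi)$ by running the standard string of Poincare--Lefschetz duality isomorphisms on the universal cover. The mechanism is that bounded nonpositive curvature forces $M$ --- and hence the compact model $\overline M$ --- to be aspherical, so that $\widetilde{\overline M}$ is contractible and $\overline M$ is a finite model for $B\pi$.

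In detail, write $m=\dim M\ge 3$ and let $\overline M$ be the compact $m$-manifold-with-boundary with interior $M$ and boundary $\partial M$. First I would collect the curvature input: by Cartan--Hadamard $\widetilde M$ is diffeomorphic to $\R^m$ and hence contractible, so $M$ is aspherical; since $\overline M$ is homotopy equivalent to its interior $M$ (collar neighbourhood theorem), it is a finite $K(\pi,1)$ and its universal cover $\widetilde{\overline M}$ is contractible. Now apply Theorem~\ref{maintheorem2} to $M$: as $m\ge 3$ it gives $H_{m-2}(\partial\widetilde M)=0$. The remainder is the chain
$$0=H_{m-2}(\partial\widetilde M)\cong H_{m-1}(\widetilde{\overline M},\partial\widetilde M)\cong H^1_c(\widetilde{\overline M})\cong H^1(\overline M;\Z\pi)\cong H^1(B\pi;\Z\pi),$$
in which the first isomorphism comes from the long exact sequence of the pair $(\widetilde{\overline M},\partial\widetilde M)$ together with the contractibility of $\widetilde{\overline M}$ (valid since $m\ge 3$), the second is Poincare--Lefschetz duality (legitimate because $\widetilde{\overline M}$, being simply connected, is orientable), the third is the standard identification of the compactly supported cohomology of a cocompact free cover with cohomology of the base with $\Z\pi$-coefficients (using that $\overline M$ is a finite complex), and the last holds because $\overline M$ is a model for $B\pi$.

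For the final assertion, let $N$ be any closed $n$-manifold with $\pi_1N=\pi$. Poincare duality on the cover $\widetilde N$ (orientable, being simply connected) gives $H_{n-1}(\widetilde N)\cong H^1_c(\widetilde N)\cong H^1(N;\Z\pi)$, and $H^1(-;\Z\pi)$ depends only on the fundamental group: one builds a $B\pi$ from $N$ by attaching cells of dimension $\ge 3$, and such attachments change the cellular cochain complex only above degree $1$, hence leave $H^1$ unchanged. Therefore $H_{n-1}(\widetilde N)\cong H^1(B\pi;\Z\pi)=0$.

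Granting Theorem~\ref{maintheorem2}, there is no genuine obstacle here --- the corollary is a bookkeeping exercise with duality --- so the only points demanding care are: (i) keeping all degrees in the range where the reduced homology of a contractible space vanishes, which is exactly what pins down the hypothesis $\dim\ge 3$; (ii) the observation that a simply connected manifold is automatically orientable, so that no twisted coefficients enter the duality; and (iii) the compact-support identification in the third isomorphism, which needs a finite (or finite-type) classifying complex and is precisely what the tameness hypothesis provides. One may further note that $H^1(B\pi;\Z\pi)=0$ forces the infinite, finitely presented group $\pi$ to have exactly one end, hence (by Stallings) to be freely indecomposable, recovering the consequence advertised in the abstract.
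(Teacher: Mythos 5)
Your argument is correct and is essentially identical to the paper's: the same chain of isomorphisms $0=H_{m-2}(\partial\widetilde M)\cong H_{m-1}(\widetilde{\overline M},\partial\widetilde M)\cong H^1_c(\widetilde{\overline M})\cong H^1(\overline M;\mathbb Z\pi)\cong H^1(B\pi;\mathbb Z\pi)$, using the long exact sequence of the pair, Poincar\'e--Lefschetz duality, the cocompact-cover identification, and the fact that $\overline M$ models $B\pi$; and the same cell-attachment argument for the closed manifold $N$. Your added explicit justifications (Cartan--Hadamard contractibility of $\widetilde{\overline M}$, orientability from simple connectivity, the role of $m\ge 3$ in the long exact sequence) are correct and fill in details the paper leaves implicit, but the underlying proof is the same.
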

\begin{example}
We can always build a closed $4$-manifold $N^4$ with a given finitely presented fundamental group. If the fundamental group satisfies the hypotheses of the Corollary, then we see that the homology of the universal cover $\widetilde N^4$ of this manifold is concentrated in dimension two. 
\end{example}
\begin{remark}
Several comments about the Corollary are in order.
\begin{itemize}
\item
First, $H_n(\widetilde N)=0$ because $\widetilde N$ is non-compact. 
\item
Taking the connect sum $N\#(S^{n-2}\times S^2)$ changes $H_{n-2}$ of the universal cover but not the fundamental group (for $n\geq 4$), so the Corollary is the best one can hope for. 
\item
The connect sum $N\#(S^{n-1}\times S^1)$ changes $H_{n-1}$ of the universal cover, but it also changes the fundamental group.   
\end{itemize}
\end{remark}
Expanding on the last item, any connect sum $M\#N$ of two closed, non-simply connected $4$-manifolds has $3$-dimensional homology in its universal cover (a nontrivial cycle is represented by the lift of one of the connecting $3$-spheres) so its fundamental group $\pi_1M*\pi_1N$ cannot satisfy the hypotheses of Corollary \ref{H^1}. Since any finitely presented group is the fundamental group of a closed $4$-manifold, the group $\pi$ in Corollary \ref{H^1} cannot be expressed as a free product of two finitely presented groups. In fact, the finite presentation hypothesis is unnecessary\footnote{This was pointed out to me by Mike Davis.}.

\begin{corollary}
\label{indecomposable}
Any group $\pi$ satisfying the hypotheses of Corollary \ref{H^1} is freely indecomposable.\footnote{A group $\pi$ is {\it freely indecomposable} if it cannot be expressed as a free product $A*B$ of nontrivial $A$ and $B$.} 
\end{corollary}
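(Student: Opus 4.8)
\medskip
\noindent\textbf{Proof proposal.}
The plan is to show that a nontrivial free splitting of $\pi$ would force $H^1(B\pi;\mathbb Z\pi)\neq 0$, contradicting Corollary \ref{H^1}. Write $G:=\pi$; by that corollary $H^1(G;\mathbb Z G)=H^1(B\pi;\mathbb Z\pi)=0$. First I would record two elementary properties of $G$. It is infinite: the universal cover $\widetilde M$ is a complete, simply connected Riemannian manifold of nonpositive curvature, so by volume comparison its geodesic balls have at least Euclidean volume and hence $\vol(\widetilde M)=\infty$, whereas $\vol(\widetilde M)=|G|\cdot\vol(M)$ would be finite if $G$ were. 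And it is torsion-free: by the Cartan--Hadamard theorem $\widetilde M$ is $\CAT(0)$, and $G$ acts on it freely by deck transformations, so it can contain no nontrivial finite subgroup --- a finite group of isometries of a complete $\CAT(0)$ space fixes the circumcenter of one of its (finite) orbits.

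Next, assume for contradiction that $G\cong A*B$ with $A$ and $B$ nontrivial. Since the free factors of a free product embed in it, $A$ and $B$ are nontrivial torsion-free groups, hence infinite. The splitting yields a $G$-tree (Bass--Serre) whose quotient is a single edge with trivial edge group, and hence the associated Mayer--Vietoris sequence in group cohomology; with coefficients in the $\mathbb Z G$-module $\mathbb Z G$ it begins
\[
0\to H^0(G;\mathbb Z G)\to H^0(A;\mathbb Z G)\oplus H^0(B;\mathbb Z G)\to\mathbb Z G\to H^1(G;\mathbb Z G)\to\cdots.
\]
For any subgroup $H\le G$ the module $\mathbb Z G$ is free over $\mathbb Z H$ (freely on a set of coset representatives), and a free module over an infinite group has no nonzero invariants, so $H^0(H;\mathbb Z G)=(\mathbb Z G)^H=0$ for $H=A$ and $H=B$. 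Exactness of the displayed sequence then embeds $\mathbb Z G$ into $H^1(G;\mathbb Z G)$, so $H^1(G;\mathbb Z G)\neq 0$, contradicting Corollary \ref{H^1}. Hence $\pi$ admits no nontrivial free product decomposition.

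I do not expect a genuine obstacle here. The only non-formal input is the Mayer--Vietoris sequence for an amalgamated product, which is classical (it is read off the augmented chain complex $0\to\mathbb Z G\to\mathbb Z[G/A]\oplus\mathbb Z[G/B]\to\mathbb Z\to 0$ of the Bass--Serre tree); everything else is the infiniteness and torsion-freeness of $\pi$ together with the vanishing of invariants of a free module over an infinite group. The point worth flagging is that torsion-freeness is doing real work: it eliminates the one delicate case, a free product of two finite groups of coprime order, where the cokernel of $H^0(A;\mathbb Z G)\oplus H^0(B;\mathbb Z G)\to\mathbb Z G$ is not visibly nonzero and one would instead have to invoke Stallings' theorem to conclude that the group has infinitely many ends; since $\pi_1$ of a nonpositively curved manifold is automatically torsion-free, this case never arises. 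Equivalently, one could phrase the whole argument in terms of ends --- $H^1(G;\mathbb Z G)=0$ says the infinite group $G$ is one-ended, whereas a nontrivial free product has more than one end --- but the homological formulation above is cleaner and self-contained.
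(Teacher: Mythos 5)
Your proof is correct and takes essentially the same approach as the paper: both use the Mayer--Vietoris sequence in group cohomology for the free product $A*B$ with $\mathbb Z\pi$-coefficients, combined with the vanishing of $H^0(\cdot;\mathbb Z\pi)$ for the infinite free factors, to embed $\mathbb Z\pi$ into $H^1(B\pi;\mathbb Z\pi)$. The only difference is that you explicitly justify the torsion-freeness of $\pi$ (hence that nontrivial free factors are infinite), whereas the paper states the free factors are infinite without comment.
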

\begin{proof}
Assume $\pi=A*B$ is the free two product of two infinite groups. The $A$-invariant functions on $A*B$ are not compactly supported, so $H^0(BA;\mathbb Z[A*B])=(\mathbb Z[A*B])^A=0$ and similarly $H^0(BB;\mathbb Z[A*B])=0$. So, by the long exact cohomology sequence with $\mathbb Z\pi$-coeffficents\footnote{$H^0(BA;\mathbb Z\pi)\oplus H^0(BB;\mathbb Z\pi)\ra H^0(*;\mathbb Z\pi)\ra H^1(B\pi;\mathbb Z\pi)$}, $H^0(*;\mathbb Z\pi)=\mathbb Z\pi$ injects into $H^1(B\pi;\mathbb Z\pi)$, which contradicts Corollary \ref{H^1}. 
\end{proof}


\subsection*{Methods}
At a heuristic level, the method is to study a cycle $z\subset\widetilde M$ lying over the thin part of the manifold $M$ by viewing it from points at infinity. Sometimes the thick part obstructs the view and one can see only part of the cycle from any given point at infinity. So, one has to look from several different points and assemble the resulting information together via algebraic topology. This is illustrated by the picture at the end of the introduction.  
To figure out where to look from, one takes the isometries that move a point $x$ on the cycle $z$ only a little bit (less that a judiciously chosen $\varepsilon$)
and then finds a point at infinity $\xi$ fixed by all these isometries. Then a neighborhood of $x$ can be seen from the point at infinity $\xi$. In fact, one can pick a neighborhood invariant under these isometries and this leads to topological constraints on the neighborhood that let one fill in portions of the cycle $z$. If the cycle has high enough dimension, then one can assemble this local topological information to show the cycle $z$ bounds over the thin part.  The relevant groups of isometries are nilpotent, and higher rank nilpotent groups give more constraints which let one fill more cycles.


At a more technical level, the main ingredients involved in carrying this out are convexity of distance functions and projections to horospheres (nonpositively curved geometry), virtual nilpotence of groups generated by small isometries (bounded curvature) and a process for assembling topological information about open sets into topological information about their union (algebraic topology). These ingredients are already present in Nguy$\tilde{\hat{\mathrm{e}}}$n Phan's proof of Theorem \ref{tamtheorem}. Once one figures out the topological constraints imposed by a nilpotent group in higher dimensions, one begins to suspect that the higher dimensional analogue of Theorem \ref{tamtheorem} should be (\ref{coeff}) and that a homological proof of it would also give (\ref{nocoeff}). The main difficulty in establishing it is that one has to assemble regions corresponding to nilpotent groups of many different ranks, while in the $4$-dimensional case one can reduce to a problem that only involves assembling regions with abelian groups $\mathbb Z$ and $\mathbb Z^2$. 
For an isometry in a nonabelian nilpotent group, a sublevel set of the isometry is usually {\it not} preserved by the entire nilpotent group\footnote{It is only preserved by elements that commute with the isometry.}, and dealing with the issues that stem from this is one of the main technical points of this paper. One thickens sublevel sets to invariant sublevel sets---this was already done in the $4$-dimensional case, but at most one set had to be thickened then and there was no need to assemble it with anything else---and builds an {\it unfolding space} that unfolds excess overlap caused by the thickening process. Then one assembles everything together in this unfolding space and folds back up at the very end.    
The additional difficulty with extending the result to bounded nonpositive curvature is that one has to deal with groups of small semisimple isometries, while in the $-1<K<0$ case all sufficiently small isometries are parabolic. So, one studies unions of minsets of such semisimple isometries and fills cycles in them.

Below we discuss a special case and then explain how to deal with the difficulties in general. 

\subsection*{An illustrative example}
Suppose $\widetilde M^4$ is the universal cover of a $4$-manifold of bounded negative curvature ($-1<K<0$). Let $U=\cup U_i$ and $V=\cup V_i$ be finite unions of sublevel sets of infinite order, orientation preserving isometries $U_i=\{x\in\widetilde M\mid d(x,\gamma_ix)<\varepsilon\}, V_i=\{x\in\widetilde M\mid d(x,\rho_ix)<\varepsilon\}$ such that the $\gamma_i$ have a common power $\gamma=\gamma_i^{r_i}$ and the $\rho_i$ have a common power $\rho=\rho_i^{s_i}$. Decompose a $2$-cycle $z\subset U\cup V$ as a union $z=a\cup b$ with $a\subset U,b\subset V$ and $\partial a=-\partial b\subset U\cap V$. Notice that $U$ is a union of $\gamma$-invariant convex sets, and similarly $V$ is a union of $\rho$-invariant convex sets. 
Moreover, if $U_i$ and $V_j$ intersect and $\varepsilon$ is picked sufficiently small, then the Margulis lemma implies that $\gamma_i$ and $\rho_j$ generate a virtually nilpotent group. Let us {\it assume this is a free abelian group of rank two}, so that $\gamma_i$ and $\rho_j$ commute. Then $U\cap V$ is a union of convex sets $U_i\cap V_j$ each invariant under the free abelian group $\left<\gamma,\rho\right>$. 

We can find a point at infinity $\xi\in\partial_{\infty}\widetilde M$ such that all the $\gamma_i$ preserve horospheres $H(x,\xi)\cong\mathbb R^3$ centered at $\xi$. Let $p:\widetilde M\ra H(x,\xi)\cong \mathbb R^3$ be the horospherical projection (see subsection \ref{horosphericalprojection}). Then any geodesic ray $[y,\xi)$ starting in $U_i$ and going to $\xi$ remains in $U_i$, so each $U_i$ decomposes as a union of such geodesic rays $\mathbb R\times p(U_i)$ and $U=\cup U_i$ decomposes as $U=\mathbb R\times p(U)$. Since $\gamma$ preserves each of the $p(U_i)$, it acts {\it homotopically trivially} on $p(U)=\cup p(U_i)$. It follows from this that $H_{2}(U)=H_{2}(p(U))=0$.\footnote{The intuition behind this homology vanishing is as follows: If $z$ is a non-trivial homology $2$-cycle in $p(U)$ then it links a pair of points $x,y$ in $\mathbb R^3\setminus p(U)$. A sufficiently large translate $\gamma^Nz$ unlinks the cycle from $\{x,y\}$ and this cannot happen if the action of $\gamma$ on $p(U)$ is homotopically trivial.} A similar argument (for a possibly different point at infinity $\eta$) gives $H_2(V)=0$. On the intersection $U\cap V$ there is a homotopically trivial action of a rank two free abelian group $\left<\gamma,\rho\right>$, and it follows from this that $H_1(U\cap V)=0$.

Putting these ``local'' homology vanishing results together, we conclude that the cycle $z$ bounds in $U\cup V$: First, since $H_1(U\cap V)=0$ we find a $2$-chain $c$ in $U\cap V$ such that $\partial c=\partial a$. Then $z=(a-c)+(c+b)$ where $a-c$ is a cycle in $U$ and $c+b$ is a cycle in $V$.\footnote{This is shown in the left picture below.} 
Both of these bound, since $H_2(U)=0=H_2(V)$, so $z$ bounds inside $U\cup V$. 

\subsection*{The general argument}
The aim of this paper is to make the arguments sketched in this example work more generally. In case of bounded negative curvature this consists of two main steps corresponding to the second and third paragraph of the above example. The first step is a ``local vanishing lemma'' which captures the topology of the end that can be seen from a single point at infinity by projecting to a single horosphere and using the action of a single nilpotent group. 
The second step is a ``Mayer-Vietoris assembly'' argument  which organizes the nilpotent groups and corresponding sublevel sets into {\it clumps} whose 
ranks grow when one takes intersections and uses this to assemble the local information together.

\begin{figure}
\centering
\includegraphics[scale=0.28]{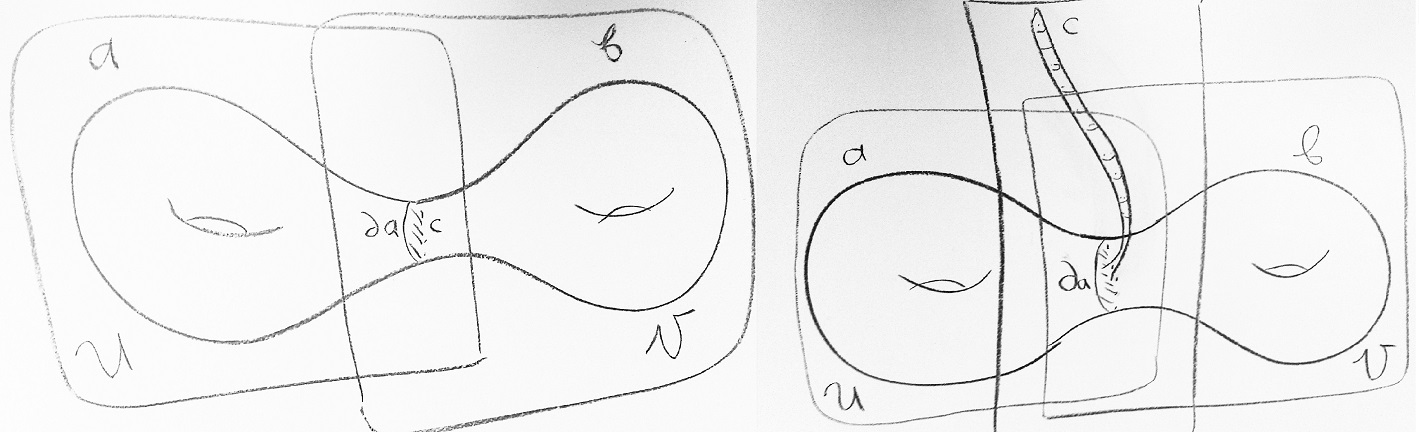}
\end{figure}

The principal difficulty is that, in general, if two sublevel sets $U_i$ and $V_j$ intersect then the group $\left<\gamma_i,\rho_j\right>$ generated by the corresponding isometries is virtually nilpotent (rather than abelian)\footnote{Even if $\varepsilon$ is picked to be very small.} 
and the intersection of sublevel sets $U_i\cap V_j$ is {\it not} invariant under this group. But, such invariance is crucial for the local vanishing portion of the argument. We deal with this problem by thickening the sublevel sets to larger, invariant sublevel sets defined via small central elements in the relevant nilpotent groups (see subsection \ref{patches}). This, in turn, makes the assembly portion of the argument more involved\footnote{Because the cycle $\partial a=-\partial b$ in $U\cap V$ only bounds a chain $c$ in the thickened version of $U\cap V$, and thus the cycles $a-c$ and $c+b$ do not lie in $U$ and $V$, respectively. This is illustrated in the right picture above.}
 and is dealt with by using an ``unfolding 
space construction'' to patch the local vanishing results 
together.

In the case of bounded nonpositive curvature one also needs to fill portions of the cycle $z$ that lie over regions that are ``thin for a semisimple reason''. The methods for dealing with parabolic isometries described above reduce the additional problems to a union of (intersections of) minsets of small semisimple isometries. For each minset there is an abelian group of semisimple isometries and if two minsets intersect then the abelian groups almost commute. A cycle of high enough dimension in such an ``almost abelian arrangement of minsets'' can be filled in a larger arrangement\footnote{We need to use larger minsets corresponding to finite index subgroups because the relevant groups of semisimple isometries only {\it almost} commute. Because of the curvature bound $(-1<K\leq 0)$ the relevant index is bounded by an (extremely large) constant that only depends on the dimension of the manifold $M$. This is why we can make sure that the larger arrangement also lies over the thin part.} which still lies over the thin part. This is proved by a local\footnote{Here, the word ``local'' means a union of minsets acted upon by a single abelian group.} 
vanishing result (Lemma \ref{semisimplevanish}, which boils down to a splitting of minsets of semisimple isometries and the fact that finite order orientation preserving isometries have fixed sets of codimension $\geq 2$) together with an assembly argument (Proposition \ref{almostabelianarrangements}). These two semisimple steps are similar in spirit to but different in detail from their parabolic counterparts. 

\subsection*{A guide to the proof of Theorem \ref{maintheorem2}} 
Some language ($\varepsilon$-patches and clumps) is introduced in section \ref{clumps} and subsections \ref{patches},\ref{small}, and \ref{big}. The special case involving rank $n-1$ nilpotent groups is dealt with in subsections \ref{end} through \ref{rankn-1}. This is the part of the argument where the tameness assumption enters. A very small $\varepsilon$ is picked in subsection \ref{smallepsilon}. With this out of the way, the two main steps of the argument in negative curvature (the local vanishing step and the assembly step) are contained in subsections \ref{parabolicclumps} and \ref{parabolicunfolding}, respectively. The remainder of section \ref{mainsection} deals with nonpositive curvature. Subsections \ref{semisimplereduction1} and \ref{semisimplereduction2} reduce the problem to an almost abelian arrangement of minsets. Subsections \ref{anothersemisimpleassemblylemma} and \ref{semisimpleassembly} show that a cycle of high enough dimension in such an arrangement of minsets bounds inside a larger arrangment of minsets, completing the proof. 

\subsection*{Homology of the universal cover of the boundary} In the example above, the homotopically trivial $\gamma$-action on the union $U=\cup U_i$ in $\partial\widetilde M$ lifts to a homotopically trivial $\gamma$-action on the inverse image of this union in $\widetilde{\partial M}$, and this leads to a local vanishing result for the homology of $\widetilde{\partial M}$. To prove Theorem \ref{maintheorem3} one does this more generally and checks that the assembly steps and semisimple reduction steps work the same way as in the proof of Theorem \ref{maintheorem2}.
%



\subsection*{Local collapse and nilpotent structures}
The idea that a space of bounded geometry has a thick-thin decomposition in which the thin part locally collapses to a lower dimensional space is a general theme (see \cite{cheegerfukayagromov,cheegergromovcollapse1}, section 5 of 
\cite{gromovicm}, and section 6 of \cite{gromovasymptotic}). Usually, some sort of local collapse structure or nilpotent structure is involved that encodes how the different nilpotent groups (obtained via the Margulis lemma in bounded geometry) fit together. The existence of such a collapse structure is known to have topological consequences, such as vanishing of the Euler characteristic, simplicial volume and $L^2$-Betti numbers. These are all consequences of the bounded geometry. In non-positive curvature, local collapse structures have also proved useful in work of Farrell and Jones on topological rigidity without compactness assumptions \cite{farrelljones,farrelljoneslocalcollapse}. From one point of view, Nguy$\tilde{\hat{\mathrm{e}}}$n Phan's result shows how negative curvature lets one extract additional global topological information out of the local collapse structure via horospherical projections. The present paper elaborates on this theme.  

\subsection*{A lack of examples}
There is a very wide gap between the constraints given by Theorem \ref{maintheorem2} and the examples of ends of manifolds with bounded nonpositve curvature constructed thus far (see \cite{abreschschroeder,fujiwara} and the survey \cite{belegradeksurvey}), especially in dimensions $n>4$. A promising tool for narrowing this gap might be the work of Ontaneda on smooth Riemannian hyperbolization, and especially its relative versions \cite{ontaneda}. So far, this method has been used to realize many almost flat manifolds  (Theorem A of \cite{ontaneda}), closed $3$-manifolds with Sol geometry \cite{nguyenphansol}, and products $S^1\times M$ where $M$ is a closed nonpositively curved manifold \cite{belegradek} as ends of complete, finite volume manifolds with bounded negative curvature. 

\begin{remark}
The Riemannian hyperbolization process naturally produces ends of manifolds with bounded negative curvature ($-1<K<0$) rather than bounded nonpositive curvature ($-1<K\leq 0$). However, it is not clear whether there is any difference between these two notions as far as ends are concerned\footnote{For instance, the author does not know whether the end of a Cartesian product of $n\geq 2$ punctured tori $(\dot{\mathbb T}^2)^n$ is also the end of a complete, finite volume, $2n$-manifold of bounded negative curvature.}.
\end{remark}

\begin{figure}
\centering
\includegraphics[scale=0.27]{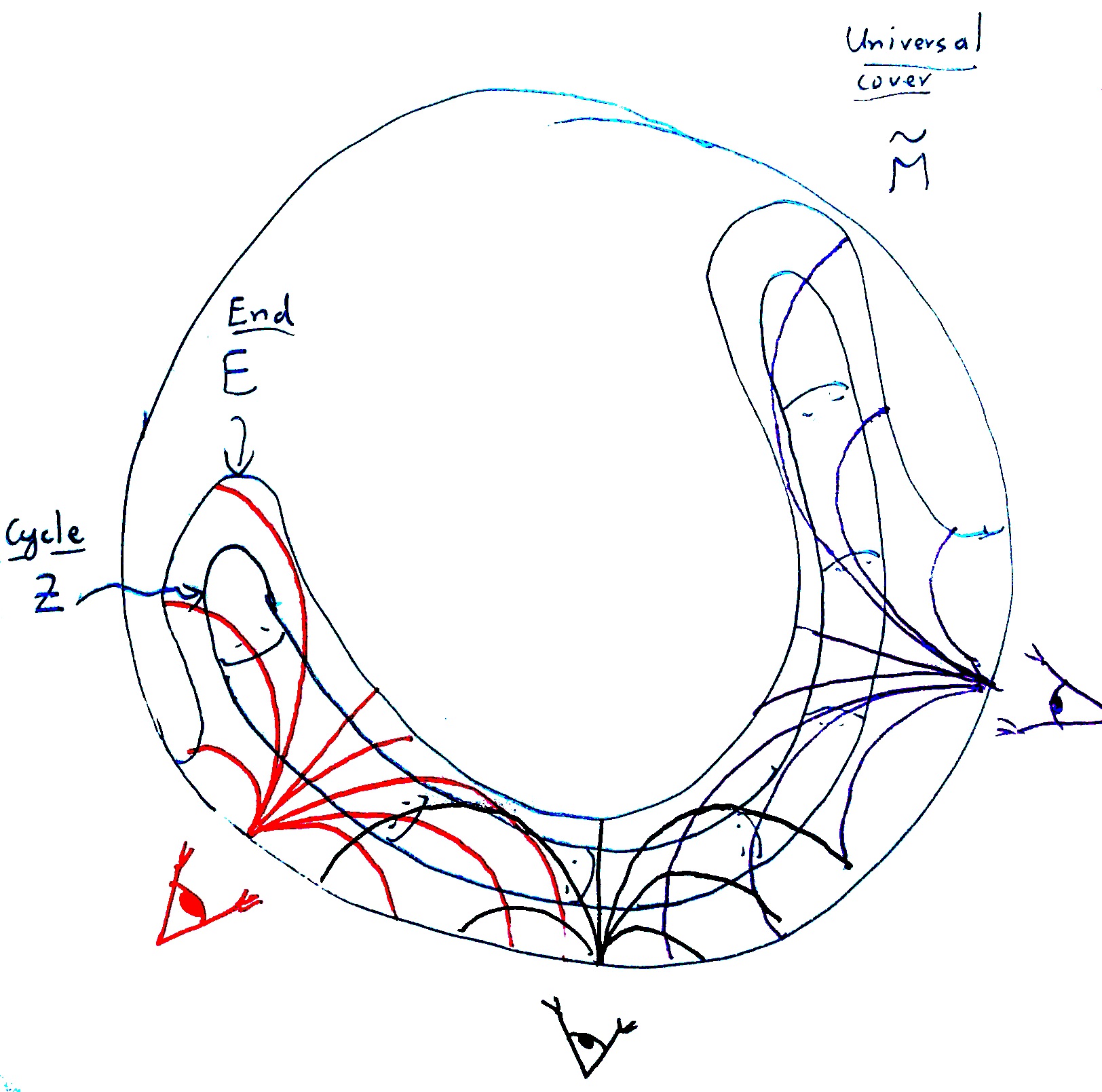}
\end{figure}

\subsection*{Contents} 
The first section sets up some notation and reviews background on nonpositively curved geometry. The second section contains background on algebraic topology along with the first instance of the fundamental ``local vanishing lemma'' (Proposition \ref{homologylemma}). The third section defines clumps. The fourth section proves the main result of the paper (Theorem \ref{maintheorem2}). The fifth section deals with coefficients and proves Theorem \ref{maintheorem3}. The last section contains a list of open questions. 

\subsection*{Acknowledgments}
I would like to thank T$\hat{\mathrm{a}}$m Nguy$\tilde{\hat{\mathrm{e}}}$n Phan for explaining her proof of Theorem \ref{tamtheorem}, discussing numerous aspects of the present paper and making crucial suggestions (e.g. the idea of how to organize patches into maximal clumps).

\section{Nonpositively curved geometry}
Good references for nonpositively curved geometry are the books \cite{ballman,ballmangromovschroeder,bridsonhaefliger} and \cite{eberlein}.
\subsection{Nilpotent groups and groups generated by small elements}

Let $N_1=N$ and $N_{i+1}=[N_i,N]$. The group $N$ is {\it nilpotent} if $N_m=0$ for some $m$. Associated to $N$ is the abelian group $\oplus_{i=1}^{m-1} N_i/N_{i+1}$. The {\it rank} of $N$ is the rank of the free abelian part of $\oplus_{i=1}^{m-1} N_i/N_{i+1}$.

\begin{lemma}[The Margulis lemma \cite{ballmangromovschroeder}]\label{lll}
Let $\widetilde{M}^n$ be a complete, simply connected manifold with sectional curvature $-1\leq K\leq 0$. There exist constants $\lambda_n$ and $I_n$ such that if $x\in \widetilde{M}$ and $\Gamma$ is a discrete group of isometries of $\widetilde{M}$ generated by isometries $\{\gamma_i\}$ with $d(x,\gamma_ix) < \lambda_n$, then $\Gamma$ has a finite index normal nilpotent subgroup of index $\leq I_n$.
\end{lemma}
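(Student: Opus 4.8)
The plan is to run the classical argument of Gromov (see Ballmann--Gromov--Schroeder). Fix the basepoint $x$ and attach to an isometry $\gamma$ with $d(x,\gamma x)$ small the pair $(\gamma x, R_\gamma)$, where $R_\gamma := P_\gamma\circ d\gamma_x$ is its \emph{rotational part}, with $P_\gamma\colon T_{\gamma x}\widetilde M\to T_x\widetilde M$ parallel transport along the unique minimizing geodesic. An isometry of a complete, simply connected manifold is determined by its $1$-jet at a point, so $\gamma\mapsto(\gamma x, R_\gamma)$ is injective and one may measure elements by
\[
\|\gamma\| := d(x,\gamma x) + \|R_\gamma - \id\|.
\]
The hypothesis $d(x,\gamma_i x)<\lambda_n$ imposes nothing on $\|R_{\gamma_i}\|$, so the rotational parts are what force the ``index $\leq I_n$'' clause; I would dispose of them at the very end using Jordan's theorem on finite subgroups of $\OO(n)$ (equivalently, the Bieberbach index bound).

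The geometric heart of the matter is a \emph{commutator contraction estimate}: there are constants $a_n,c_n>0$, depending only on $n$, with
\[
\|\gamma\|,\|\delta\|\leq a_n \;\Longrightarrow\; \|[\gamma,\delta]\|\leq c_n\|\gamma\|\,\|\delta\| \quad\text{and}\quad \|\gamma\delta\|\leq \|\gamma\|+\|\delta\|+c_n\|\gamma\|\,\|\delta\|.
\]
One would prove it by splitting the effect of a product or commutator into three pieces: the commutator of the two rotational parts inside the compact group $\OO(T_x\widetilde M)$, where $\|[A,B]-\id\|\lesssim_n\|A-\id\|\,\|B-\id\|$ is elementary; a mixed ``rotation of a translation'' piece, bounded by $\|R-\id\|$ times a displacement; and the commutator of two ``translations'', whose deviation from the identity is the holonomy around a geodesic loop of diameter $\lesssim\|\gamma\|+\|\delta\|$ bounding area $\lesssim\|\gamma\|\,\|\delta\|$, hence is at most $\sup|K|\cdot\|\gamma\|\,\|\delta\|\leq\|\gamma\|\,\|\delta\|$ by $-1\leq K\leq 0$. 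Making this picture into genuine inequalities is a Jacobi-field/Rauch-comparison computation, using $K\leq 0$ to keep minimizing geodesics between nearby points short and unique and $K\geq -1$ to bound holonomy; I expect this to be the main obstacle, and it is the only place both curvature bounds are genuinely used.

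Granting the estimate, pick $\delta_n\leq a_n$ with $c_n\delta_n\leq\tfrac12$ and let $\Lambda\leq\Gamma$ be generated by all $\gamma\in\Gamma$ with $\|\gamma\|\leq\delta_n$. Every iterated commutator (with any bracketing) of weight $k$ in these generators has norm $\leq 2^{-(k-1)}\delta_n$; since $\Gamma$ is discrete it acts properly discontinuously on $\widetilde M$, so only finitely many of its elements have norm $\leq\delta_n$, and hence all iterated commutators of sufficiently large weight vanish. This forces $\gamma_k(\Lambda)=1$ for some $k$, so $\Lambda$ is nilpotent (of class bounded in terms of $n$, as it lies in the Lie group $\Isom(\widetilde M)$ of dimension $\leq n(n+1)/2$). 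For the index bound: distinct cosets $\gamma\Lambda\neq\gamma'\Lambda$ satisfy $\|\gamma^{-1}\gamma'\|>\delta_n$, and $\Gamma=\langle\gamma_i\rangle$ equals the group generated by all $\gamma\in\Gamma$ with $\gamma x\in B(x,\lambda_n)$. The subgroup $\Gamma'$ generated by those $\gamma$ that \emph{also} have $\|R_\gamma-\id\|$ small then has $[\Gamma':\Lambda]\leq N_n$ by a packing count inside $B(x,\lambda_n)\times\OO(n)$ (volume comparison with $K\leq 0$ bounds the number of $\delta_n$-balls needed), while the remaining rotational freedom in $\Gamma$ is absorbed, using Jordan/Bieberbach, into an almost-nilpotent subgroup of index $\leq J(n)$. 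Combining these and passing to the normal core produces a normal nilpotent subgroup of index $\leq I_n$, a function of $N_n$, $J(n)$, and the nilpotency class---all depending only on $n$. (Alternatively, one may blow up the metric at $x$: the rescaled manifolds converge to Euclidean $\mathbb R^n$ and $\Gamma$ to a discrete isometry group of $\mathbb R^n$, virtually abelian with Bieberbach-bounded index, which transfers back---at the cost of an ultralimit compactness argument.)
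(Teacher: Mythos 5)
The paper does not prove this lemma at all; it is quoted with a citation to Ballmann--Gromov--Schroeder (where it is established in Chapter~8, following Gromov and Buser--Karcher), so there is no internal proof to compare against. Your sketch reconstructs that classical argument, and the two ingredients you identify as central---measuring an isometry by the ``1-jet norm'' $\|\gamma\|=d(x,\gamma x)+\|R_\gamma-\id\|$, and the commutator contraction estimate proved by a Jacobi-field/holonomy computation using both curvature bounds---are exactly the right ones. The observation that the hypothesis controls only displacement, not rotation, and that short elements (small in both) generate the nilpotent piece, is also correct, as is passing to a normal core at the end to secure normality.

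The place where the sketch is thin is the index bound, and ``Jordan's theorem'' is not quite the mechanism. What one actually does is a packing count: two elements $\gamma,\gamma'\in\Gamma$ with $d(x,\gamma x),\,d(x,\gamma'x)<\lambda_n$ and with $R_\gamma,R_{\gamma'}$ within a $\delta_n$-ball of $\OO(n)$ satisfy $\|\gamma^{-1}\gamma'\|$ small and hence $\gamma^{-1}\gamma'\in\Lambda$, so the short elements of $\Gamma$ meet at most $J(n)$ cosets of $\Lambda$. But this alone does not bound $[\Gamma:\Lambda]$---a group generated by $J(n)$ cosets can be infinite---so the final step requires establishing that $\Lambda$ is normal in $\Gamma$ (conjugation by a short-displacement element approximately conjugates the rotational part in $\OO(n)$, keeping it close to the identity) \emph{and} that every coset of $\Lambda$ in $\Gamma$ has a representative of uniformly bounded displacement. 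Your phrase about the ``remaining rotational freedom being absorbed'' is gesturing at this but elides the real work, and it is here, not in the contraction estimate, that the argument is most likely to come apart if the constants are not chosen carefully. One smaller point: the parenthetical claim that the nilpotency class is bounded in terms of $n$ is not needed for the statement (which only asserts a bounded index, not a bounded class), and your discreteness argument---which yields a class bound depending on $\Gamma$---already suffices for nilpotence.
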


\begin{lemma}[Small central elements]
\label{center}
Let $N$ be a nilpotent group with $N_m=0$. If elements $\{\gamma_i\}$ generate $N$ then there is a nontrivial element $\rho$ in the center of $N$ that can be expressed as a word in the generators $\{\gamma_i\}$ of length at most $3^m$. 
\end{lemma}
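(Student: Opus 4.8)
The plan is to realize $\rho$ as a single left-normed iterated commutator of the generators taken at the top of the lower central series, and then to bound its length by unwinding the brackets. Assuming $N\neq 1$, let $k$ be the largest index with $N_k\neq 0$; since $N_m=0$ we have $1\le k\le m-1$, and $[N_k,N]=N_{k+1}=0$ shows $N_k\subseteq Z(N)$. Thus it suffices to exhibit a nontrivial element of $N_k$ that is a short word in the $\gamma_i$.

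For this I would use the standard fact that, for each $j$, the term $N_j$ is generated together with $N_{j+1}$ by the left-normed weight-$j$ commutators $[[\dots[\gamma_{i_1},\gamma_{i_2}],\dots],\gamma_{i_j}]$ in the generators. This is proved by induction on $j$ (the case $j=1$ being the hypothesis that the $\gamma_i$ generate $N$): the inductive step reduces an arbitrary generator $[c,\gamma]$ of $N_j=[N_{j-1},N]$, modulo $N_{j+1}$, to a product of weight-$j$ left-normed commutators and their inverses, using the commutator identities $[xy,z]\equiv[x,z][y,z]$ and $[x,yz]\equiv[x,y][x,z]$ modulo $N_{j+1}$ for $x,y\in N_{j-1}$, together with the inclusion $[N_{j-1},N_j]\subseteq N_{2j-1}\subseteq N_{j+1}$, valid for $j\ge 2$. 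Applying this with $j=k$ and using $N_{k+1}=0$, the group $N_k$ is generated outright by the weight-$k$ left-normed commutators in the $\gamma_i$; since $N_k\neq 0$, at least one of them, call it $\rho$, is nontrivial. By the previous paragraph $\rho$ is a nontrivial central element.

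It remains to bound the length of $\rho$. Writing $[a,b]=a^{-1}b^{-1}ab$, a weight-$j$ left-normed commutator has the form $[c,\gamma_{i_j}]$ with $c$ of weight $j-1$, so a length bound $\ell_{j-1}$ for weight-$(j-1)$ commutators gives $\ell_j\le 2\ell_{j-1}+2$; with $\ell_1=1$ this yields $\ell_j+2\le 2^{j-1}(\ell_1+2)=3\cdot 2^{j-1}$, hence $\ell_j\le 3\cdot 2^{j-1}-2<3^j\le 3^m$. Since $k\le m$, the word $\rho$ has length at most $3\cdot 2^{k-1}-2<3^m$, as required; the case $k=1$ (so $N$ abelian, $\rho$ a generator) is consistent with $\ell_1=1$. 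The only delicate point is the group-theoretic generation statement of the second paragraph — the ``bilinearity of the commutator bracket modulo higher terms'' that forces the weight-$k$ left-normed commutators to span $N_k$; this is standard and could instead be quoted from a reference on nilpotent groups, with everything else being bookkeeping.
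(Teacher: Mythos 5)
Your proof is correct, but it takes a genuinely different route from the paper. The paper's argument is a greedy/iterative construction: set $\rho_1:=\gamma_1$; if $\rho_j$ is not central there is, because the centralizer is a subgroup, a \emph{generator} $\gamma_{i_{j+1}}$ with $\rho_{j+1}:=[\rho_j,\gamma_{i_{j+1}}]\ne 1$; one keeps going, noting $\rho_j\in N_j$ directly from the definition of the lower central series, so $N_m=0$ forces termination, and the last nonidentity $\rho_j$ is central by the stopping condition. The only estimate needed is $\|\rho_j\|\le 2(\|\rho_{j-1}\|+1)$, which is the same recursion you solve. By contrast, you argue ``top--down'': you identify the last nonzero term $N_k$, observe $N_k\subseteq Z(N)$, and then invoke the structural fact that $N_k=N_k/N_{k+1}$ is generated by weight-$k$ left-normed commutators of the generators (the standard ``commutator bilinearity modulo higher terms'' plus $[N_a,N_b]\subseteq N_{a+b}$). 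This works and gives the identical numerical bound, but it leans on a nontrivial result from the theory of nilpotent groups, whereas the paper's proof is self-contained and avoids any commutator calculus; the paper's element need not lie in $N_k$ (it may stop early), while yours always does. If you want to keep your structure, it would be cleanest to cite the generation-of-$N_k$ fact explicitly rather than resketch it, since proving it carefully is more work than the greedy argument you are replacing.
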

\begin{proof}
Either $\rho_1:=\gamma_1$ is in the center or there $\gamma_{i_2}$ such that $\rho_2:=[\gamma_1,\gamma_{i_2}]\not=1$. Either this is in the center or 
there is an element $\gamma_{i_3}$ such that $\rho_3:=[[\gamma_1,\gamma_{i_2}],\gamma_{i_3}]\not=1$ etc. The word length $||\rho_i||_{\{\gamma_i\}}$ of $\rho_i$ in the generating set $\{\gamma_i\}$ is $\leq 2(||\rho_{i-1}||_{\{\gamma_i\}}+1)$ so one easily checks that $||\rho_i||_{\{\gamma_i\}}<3^m$.  Note that $\rho_i\in N_i$ so this process terminates after at most $m$ steps, and the last non-identity $\rho_i$ is contained in the center. 
\end{proof} 
\begin{remark}
It follows from Malcev's theorem that for a torsionfree, finitely generated, nilpotent group $N$, if $N$ has rank $n$ then $N_n=0$.
\end{remark}

The following observation about isometric actions follows from the triangle inequality.

\begin{lemma}
\label{product}
Suppose that elements $\gamma_i$ act by isometries. Then
$$
d(\gamma_1\gamma_2\cdots\gamma_r x, x)\leq\sum_{i=1}^rd(\gamma_ix,x).
$$
\end{lemma}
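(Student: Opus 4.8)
The plan is to telescope along the partial products. Set $x_0:=x$ and $x_k:=\gamma_1\gamma_2\cdots\gamma_k x$ for $1\leq k\leq r$, so that $x_r=\gamma_1\gamma_2\cdots\gamma_r x$. Applying the triangle inequality to the chain of points $x_0,x_1,\dots,x_r$ gives
$$
d(\gamma_1\gamma_2\cdots\gamma_r x,x)=d(x_r,x_0)\leq\sum_{k=1}^r d(x_k,x_{k-1}).
$$
For each $k$ we have $x_k=(\gamma_1\cdots\gamma_{k-1})(\gamma_k x)$ and $x_{k-1}=(\gamma_1\cdots\gamma_{k-1})(x)$, and since $\gamma_1\cdots\gamma_{k-1}$ acts by an isometry, $d(x_k,x_{k-1})=d(\gamma_k x,x)$. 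Substituting this into the displayed inequality yields the claim.

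Alternatively one can induct on $r$: the case $r=1$ is an equality, and for the inductive step one inserts $\gamma_1 x$ and writes $d(\gamma_1\cdots\gamma_r x,x)\leq d(\gamma_1\cdots\gamma_r x,\gamma_1 x)+d(\gamma_1 x,x)=d(\gamma_2\cdots\gamma_r x,x)+d(\gamma_1 x,x)$, where the equality uses that $\gamma_1$ is an isometry, and then applies the inductive hypothesis to $\gamma_2,\dots,\gamma_r$. As for obstacles, there is essentially none beyond bookkeeping: the only point requiring a little care is to group the partial product so that the isometry being cancelled appears on the \emph{left} of both arguments of $d$, which is precisely why $d(x_k,x_{k-1})$ collapses to $d(\gamma_k x,x)$ rather than to $d(\gamma_k y,y)$ for some displaced point $y$.
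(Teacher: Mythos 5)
Your proof is correct, and your ``alternative'' inductive argument is precisely the paper's own proof (insert $\gamma_1 x$, apply the triangle inequality, cancel the isometry $\gamma_1$ on the left, and iterate). Your main telescoping argument is just a non-recursive repackaging of the same idea, so this is essentially the same approach as the paper.
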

\begin{proof}
\begin{eqnarray*}
d(\gamma_1\gamma_2\cdots\gamma_r x,x)&\leq&d(\gamma_1\gamma_2\cdots\gamma_r x,\gamma_1 x)+d(\gamma_1x,x),\\
&=&d(\gamma_2\cdots\gamma_r x,x)+d(\gamma_1x,x),\\
&\leq&\dots,\\
&\leq&\sum_{i=1}^rd(\gamma_ix,x).
\end{eqnarray*}
\end{proof}

\subsection{\label{horosphericalprojection}The geodesic projection onto a horosphere}
A general reference for this subsection is Eberlein's book \cite{eberlein}. 
Let $\widetilde M$ be a simply connected, nonpositively curved manifold. 
The horosphere $H(x,\xi)$ centered at a point at infinity $\xi\in\partial_{\infty}\widetilde M$ and passing through the point $x\in\widetilde M$ 
can be constructed in the following way. Let $r:[0,\infty)\ra\widetilde M$ be a geodesic ray starting at the point $r(0)=x$ and pointing towards $r(\infty)=\xi$, and let $B_t(r(t))$ be the ball of radius $t$ centered at the point $r(t)$. The union of these balls for all $t>0$ is the horoball $HB(x,\xi)=\cup_{t>0}B_t(r(t))$ and its boundary is the horosphere $H(x,\xi)$. Any such horosphere is homeomorphic to $\mathbb R^{n-1}$.  The entire space $\widetilde M$ decomposes topologically (but not isometrically!) as a product $\widetilde M=\mathbb R\times H(x,\xi)$, where the horizontal slices $\mathbb R\times y$ are geodesic lines passing through the point $y\in H(x,\xi)$ with one endpoint being the point at infinity $\xi$, and the vertical slices $t\times H(x,\xi)$ are horospheres $H(r(t),\xi)$. (The point $r(t)$ for negative $t$ is obtained by extending the geodesic ray $r$ to a geodesic line $r:(-\infty,\infty)\ra\widetilde M$.) The projection onto the second factor $p_{\xi}:\widetilde M\ra H(x,\xi)$ is called {\it the geodesic projection onto a horosphere centered at $\xi$.} The geodesic projection has the following important {\it shrinking property}: If $t<0$ then $d(0\times x_1,0\times x_2)\leq d(t\times x_1,t\times x_2)$.

\subsection{Sublevel sets of displacement functions}
For an isometry $\gamma$ of $\widetilde M$, look at the set $U:=\{x\in\widetilde M\mid d(\gamma x,x)<\varepsilon\}$ of points the isometry moves by less than $\varepsilon$. The set $U$ is convex and consequently contractible. Moreover, suppose that $\xi\in\partial_{\infty}\widetilde M$ is a point at infinity fixed by the the isometry $\gamma$, and let $r:[0,\infty)\ra\widetilde M$ be a geodesic ray pointing towards $r(\infty)=\xi$. Then for any $t>0$ we have $d(r(t),\gamma r(t))\leq d(r(0),\gamma r(0))$ since the rays $r$ and $\gamma r$ have the same endpoint $\xi$ and the geodesic projection to this point is shrinking. Consequently, if $x\in U$  then the entire geodesic ray $\overline{x\xi}$ connecting $x$ to the point at infinity $\xi$ is contained in $U$.

\subsection{Parabolic isometries and fixed points on the sphere at infinity}
An isometry $\phi$ of a simply connected nonpositively curved manifold $\widetilde M$ is {\it parabolic} if the displacement function $d_{\phi}:\widetilde M\ra\mathbb R,d_{\phi}(x)=d(x,\phi x)$ does not attain its infimum. A parabolic isometry $\phi$ fixes at least one point at infinity, and possibly more. Moreover, the following is a special case of the lemma at the end of section 3.9 of Ballmann-Gromov-Schroeder. 
\begin{lemma}
\label{fixedpoint}
Let $\phi$ be a parabolic isometry. There are points $\xi\in\partial_{\infty}\widetilde M$ at infinity such that every isometry $\gamma$ commuting with $\phi$ preserves $\xi$ and every horosphere $H(x,\xi)$ centered at $\xi$. 
\end{lemma}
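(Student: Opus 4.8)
The plan is to deduce this from the cited lemma of Ballmann--Gromov--Schroeder; here is the shape of the argument and where the work lies. First reformulate the conclusion: a point $\xi\in\partial_\infty\widetilde M$ has the stated property exactly when its Busemann function $b_\xi$ is invariant under the centralizer $Z(\phi)=\{\gamma:\gamma\phi=\phi\gamma\}$ --- indeed ``$\gamma$ preserves every horosphere $H(x,\xi)$'' says that $\gamma$ permutes the level sets of $b_\xi$ trivially, hence $b_\xi\circ\gamma=b_\xi$, which in particular forces $\gamma\xi=\xi$. So the goal is to produce a $Z(\phi)$--invariant Busemann function.

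The source of such a function is the displacement function $d_\phi(x)=d(x,\phi x)$. It is convex, and it is $Z(\phi)$--invariant since $d_\phi(\gamma x)=d(\gamma x,\phi\gamma x)=d(\gamma x,\gamma\phi x)=d(x,\phi x)$. Hence each sublevel set $C_c=\{d_\phi\le c\}$ is closed, convex, and $Z(\phi)$--invariant. Because $\phi$ is parabolic the infimum $d_0:=\inf d_\phi$ is not attained; since $\inf_{C_c}d_\phi=d_0$, this forces each $C_c$ (for $c>d_0$) to be noncompact and forces $\bigcap_{c>d_0}C_c=\emptyset$. Thus $\{C_c\}_{c>d_0}$ is a nested, $Z(\phi)$--invariant family of noncompact closed convex sets escaping to infinity. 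Passing to the compactification $\widetilde M\cup\partial_\infty\widetilde M$, the closures $\overline{C_c}$ are compact, nested and nonempty, so $\Xi:=\bigcap_{c>d_0}\overline{C_c}$ is a nonempty $Z(\phi)$--invariant subset of $\partial_\infty\widetilde M$, and it is convex for the Tits metric, being the nested intersection of the Tits--convex sets $\overline{C_c}\cap\partial_\infty\widetilde M$.

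The geometric heart of the matter --- and the only place I expect to really need that $\phi$ is parabolic --- is that $\Xi$ has Tits diameter $<\pi$. If two points of $\Xi$ were at Tits distance $\ge\pi$ they would be joined by a geodesic line, and a limiting argument (take $x_k\to$ one endpoint and $y_k\to$ the other inside $C_c$ and pass to the limit of the segments $[x_k,y_k]\subset C_c$) produces, for each $c$, a geodesic line on which $d_\phi\le c$ and hence --- being convex and bounded along a line --- constant; letting $c\downarrow d_0$ and taking a further limit exhibits a geodesic line on which $d_\phi\equiv d_0$, so $d_0$ is attained, contradicting parabolicity. Granting the diameter bound, $\Xi$ has a unique circumcenter $\xi\in\partial_\infty\widetilde M$, which every isometry preserving $\Xi$ --- in particular every $\gamma\in Z(\phi)$ --- must fix. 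It remains to upgrade ``$Z(\phi)$ fixes $\xi$'' to ``$b_\xi\circ\gamma=b_\xi$ for all $\gamma\in Z(\phi)$'', i.e.\ that the Busemann cocycle of each such $\gamma$ at $\xi$ vanishes; this is the second place parabolicity enters, since a nonzero cocycle would make the orbit $\{\gamma^n x\}$ --- which has constant $d_\phi$--value because $\gamma$ commutes with $\phi$, and which stays inside a single $C_c$ --- drift off monotonically along the $\xi$--direction, and making precise why this is incompatible with $\xi$ having been extracted from $d_\phi$ as the circumcenter of $\Xi$ is exactly the content of the cited lemma. I expect this last step, together with the Tits--diameter bound, to be the main obstacle; everything else is formal, and since both hard points are precisely what the Ballmann--Gromov--Schroeder lemma supplies, one may alternatively just invoke it.
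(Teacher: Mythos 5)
Your proposal diverges from the paper's proof, which is a direct invocation of the Ballmann--Gromov--Schroeder lemma: the paper verifies that $d_\phi$ is convex, $Z(\phi)$-invariant, and does not attain its infimum, then cites the set $\partial_2 d_\phi$ from BGS section 3.9 and reads off the conclusion. You instead sketch a standalone construction (nested sublevel sets, circumcenter at infinity, Busemann-cocycle argument). This is a reasonable plan to attempt, but the two points you flag as ``the main obstacle'' are genuinely where the argument breaks, and they are precisely the content one is outsourcing to BGS.

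Concretely, the Tits-diameter bound you assert is false in general. Take $\widetilde M = \mathbb H^2\times\mathbb R$ and $\phi$ a parabolic on the $\mathbb H^2$ factor acting trivially on $\mathbb R$. Then $C_c = \{d^{\mathbb H^2}_\phi\le c\}\times\mathbb R$, and the set $\Xi=\bigcap_{c>d_0}\overline{C_c}\cap\partial_\infty\widetilde M$ contains both endpoints of the $\mathbb R$-factor, which are at Tits distance exactly $\pi$. Your limiting argument is supposed to rule this out, but notice what happens: for each $c$ you extract a vertical line $\{p(c)\}\times\mathbb R$ inside $C_c$ with $d_\phi$ constant equal to $d^{\mathbb H^2}_\phi(p(c))$, and as $c\downarrow d_0$ the points $p(c)$ escape to the parabolic fixed point in $\mathbb H^2$, so the lines leave every compact set and there is no limiting line in $\widetilde M$. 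The ``further limit'' you invoke does not exist, so the contradiction you want is not produced. (One also has to be careful that Tits distance $\ge\pi$ does not by itself give a connecting geodesic line; that is only automatic for distance strictly greater than $\pi$.) So the circumcenter may not be well posed by the route you describe, and even where it is, extracting a fixed point $\xi$ is strictly weaker than the conclusion needed: an isometry can fix a point at infinity without preserving the horospheres there (think of a hyperbolic isometry fixing one of its two endpoints). The horosphere-preservation --- vanishing of the Busemann cocycle of every $\gamma\in Z(\phi)$ at $\xi$ --- is the real assertion, and your sketch of why drift ``is incompatible with $\xi$ having been extracted from $d_\phi$'' is exactly what the BGS construction of $\partial_2 d_\phi$ is engineered to deliver; it does not follow from the circumcenter being $Z(\phi)$-fixed. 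As you yourself note, the cleanest path is to observe that $d_\phi$ is convex, $Z(\phi)$-invariant, and without minimum, and invoke the BGS lemma directly --- which is what the paper does.
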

\begin{proof}
Since $\phi$ is parabolic, the displacement function $d_{\phi}$ is a convex function that does not attain its infimum. It is $\gamma$-invariant since $d_{\phi}(\gamma x)=d(\gamma x,\phi\gamma x)=d(\gamma x,\gamma\phi x)=d(x,\phi x)=d_{\phi}(x)$.
Thus, in the notation of section 3.9, the set $\partial_2d_{\phi}$ is a nonempty subset of $Bd(X)$ (= the sphere at infinity $\partial_{\infty}\widetilde M$ by section 3.6). For every point $\xi$ in the set $\partial_2d_{\phi}$, the isometry $\gamma$ fixes $\xi$ and every horosphere $H(x,\xi)$ centered at $\xi$.  
\end{proof}

\begin{remark}
Eberlein defines the distinguished center of gravity of a parabolic isometry $\phi$ (page 278 of \cite{eberlein}). We do not know if every other isometry commuting with $\phi$ preserves all horospheres centered at this distinguished center of gravity, which is why we do not use it and rely on the above lemma instead. 
\end{remark}

\subsection{Small central parabolic elements}
\begin{lemma}[Sec. 7.5 of \cite{ballmangromovschroeder}]
If the commutator $[a,b]$ commutes with $a$ and $b$ then $\inf d_{[a,b]}=0$.
\end{lemma}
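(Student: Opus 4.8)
The plan is to argue by contradiction. Write $c:=[a,b]$ and suppose $\ell:=\inf d_c>0$. First note that $d_c$ is invariant under $a$ and under $b$, by the computation already used in the proof of Lemma \ref{fixedpoint}: if $g$ commutes with $c$ then $d_c(gx)=d(gx,cgx)=d(gx,gcx)=d(x,cx)=d_c(x)$. Hence $d_c$, and each of its sublevel sets, is $\langle a,b\rangle$-invariant. Since $\ell>0$ the isometry $c$ has no fixed point in $\widetilde M$, so it is either semisimple with an axis (the infimum of $d_c$ is attained) or parabolic (it is not), and I would treat these two cases separately.

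\emph{The semisimple case.} Here $\Min(c)$ is a nonempty closed convex set that splits isometrically as a product $Y\times\mathbb R$, with $c$ acting as the identity on $Y$ and as translation by $\ell$ on the $\mathbb R$-factor (the standard structure of minimal sets of semisimple isometries, see \cite{bridsonhaefliger}). The axes $\{y\}\times\mathbb R$ are all asymptotic to a common point $\xi\in\partial_\infty\widetilde M$, and every isometry $g$ commuting with $c$ fixes $\xi$, since $g\xi=g\lim_n c^n x=\lim_n c^n(gx)=\xi$; thus $a,b,c\in\Stab(\xi)$. An isometry fixing $\xi$ carries horoballs centered at $\xi$ to horoballs centered at $\xi$, so $\beta_\xi\circ g=\beta_\xi+k(g)$ for a constant $k(g)$, and $g\mapsto k(g)$ is a homomorphism $\Stab(\xi)\to\mathbb R$. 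Restricting $\beta_\xi$ to the convex set $\Min(c)=Y\times\mathbb R$, where it agrees up to a constant with the negative of the $\mathbb R$-coordinate, gives $k(c)=-\ell$; but $k(c)=k([a,b])=0$, so $\ell=0$, a contradiction. (The same argument works whenever $a,b,c$ share a point at infinity fixed by all three at which the Busemann homomorphism does not vanish on $c$.)

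\emph{The parabolic case}, which is where I expect the real difficulty to lie. Now I would apply Lemma \ref{fixedpoint} with $\phi=c$: there is $\xi\in\partial_\infty\widetilde M$, lying in the set $\partial_2 d_c$ of section 3.9 of \cite{ballmangromovschroeder}, that is fixed by every isometry commuting with $c$ — in particular by $a$, $b$ and $c$ — with all of them preserving every horosphere centered at $\xi$. So this time the Busemann homomorphism is trivial on all three and does not detect $\ell$; what one still has is that $a,b,c$ commute with the geodesic flow toward $\xi$, and that $d_c$ is non-increasing along a ray $r$ to $\xi$ with $d_c(r(t))\searrow\ell$. The plan to force $\ell=0$ is quantitative. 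Since $\langle a,b\rangle$ is nilpotent of class two one has the identity $c^{mn}=[a^m,b^n]=a^mb^na^{-m}b^{-n}$, and by the shrinking property of subsection \ref{horosphericalprojection} (flowing toward $\xi$ is distance non-increasing) the displacement of this word measured arbitrarily far out toward $\xi$ is at least $mn\,\ell$, while a fourfold application of the triangle inequality as in Lemma \ref{product} bounds it by a sum of displacements of $a^{\pm m}$ and $b^{\pm n}$ at base points obtained from a point on $r$ by applying powers of $a$ and $b$. As those powers preserve the horospheres at $\xi$, the class-two identities ($a\,b^{-n}=c^{-n}b^{-n}a$, and the like) should keep the drift of these base points along the horospheres bounded independently of $m$ and $n$, yielding an estimate of the form $mn\,\ell\le C_1m+C_2n$; letting $m,n\to\infty$ then forces $\ell=0$. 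Making this drift control uniform in $m$ and $n$ is the bookkeeping I expect to be the genuine obstacle.
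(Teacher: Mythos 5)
The paper never proves this lemma---it is quoted from Section~7.5 of \cite{ballmangromovschroeder}---so there is nothing in-text to compare against and your attempt has to stand on its own. Your semisimple case does: if $c=[a,b]$ is hyperbolic with $\ell=\inf d_c>0$, everything commuting with $c$ fixes the forward endpoint $\xi^+$ of its axis (swapping endpoints would conjugate $c$ to $c^{-1}\neq c$), and the Busemann homomorphism $k\colon\Stab(\xi^+)\to\mathbb R$ sends the commutator $c$ to $0$ while evaluating to $\pm\ell$ on it, so $\ell=0$.

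The parabolic case has a genuine gap, and you have diagnosed it in the wrong place. The ``drift control'' is a non-issue: Lemma~\ref{product} applied to the word $a^mb^na^{-m}b^{-n}$ gives, with a single fixed base point,
$$
d_{c^{mn}}(x)=d\bigl(a^mb^na^{-m}b^{-n}x,\,x\bigr)\le 2d_{a^m}(x)+2d_{b^n}(x)\le 2m\,d_a(x)+2n\,d_b(x),
$$
so the estimate $C_1m+C_2n$ falls out with no appeal to $\xi$, to horospheres, or to the class-two identities. What you do \emph{not} justify is the lower bound you invoke without comment---that the displacement of $c^{mn}$ ``arbitrarily far out toward $\xi$ is at least $mn\,\ell$.'' That is precisely the assertion $\inf d_{c^{k}}\ge k\,\inf d_c$, i.e.\ that the translation number $\lim_k d(c^kx,x)/k$ equals $\inf d_c$ rather than merely being $\le$ it. For hyperbolic $c$ this is immediate from the axis, which is why your first case closed, but for a parabolic $c$ with $\inf d_c>0$ it is a nontrivial fact about Hadamard manifolds, and none of the tools you cite (the shrinking property, Lemma~\ref{fixedpoint}, nilpotence of class two) supplies it. That inequality is the real content here; once it is granted, the case split and the whole apparatus of points at infinity are superfluous, since $mn\ell\le 2m\,d_a(x)+2n\,d_b(x)$ for all $m,n$ already forces $\ell=0$.
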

\begin{corollary}
\label{paraboliccenter}
Suppose $N=\left<\gamma_1,\dots,\gamma_k\right>$ is a nilpotent group of covering translations with $N_m=0$. If $N$ contains a parabolic element, then it contains a central parabolic element $\rho$ of word length $\leq 3^m$ in the generating set $\{\gamma_i^{\pm1}\}$.
\end{corollary}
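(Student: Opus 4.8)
The plan is to split on whether $N$ is abelian, and in the non-abelian case to run the iterated-commutator construction in the proof of Lemma~\ref{center}, but seeded at a \emph{non-central} generator, so that the central element it produces is genuinely a commutator; the lemma above (Sec.~7.5 of \cite{ballmangromovschroeder}) will then force that commutator to have zero infimal displacement, and since a nontrivial covering translation acts freely it cannot attain its displacement, hence must be parabolic.

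In detail: if $N$ is non-abelian then some generator $\gamma_{i_0}$ is not central (a group generated by central elements is abelian). Put $\rho_1:=\gamma_{i_0}$ and, as long as the current $\rho_{k-1}$ is non-central, pick a generator $\gamma_{i_k}$ with $\rho_k:=[\rho_{k-1},\gamma_{i_k}]\neq 1$ (one exists, since an element commuting with every generator is central). Exactly as in Lemma~\ref{center} one has $\rho_k\in N_k$, so, as $N_m=0$, the process stops at some step $j$ with $\rho:=\rho_j$ a nontrivial central element, and $2\le j\le m-1$ because $\rho_1$ was non-central. The recursion $\|\rho_1\|=1$, $\|\rho_k\|\le 2(\|\rho_{k-1}\|+1)$ from that proof gives $\|\rho\|\le 3^m$ in $\{\gamma_i^{\pm1}\}$. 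Now $\rho=[\rho_{j-1},\gamma_{i_j}]$ is central, hence commutes with both $\rho_{j-1}$ and $\gamma_{i_j}$; by the lemma above, $\inf d_{\rho}=0$. Since $\rho\neq1$ is a covering translation it acts freely, so $d_\rho>0$ everywhere and this infimum is not attained: $\rho$ is parabolic.

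If $N$ is abelian then every element is central, so it is enough to find a parabolic element among the generators $\gamma_i^{\pm1}$ (such an element then has word length $1\le 3^m$). Suppose, for contradiction, that each $\gamma_i$ is semisimple. Commuting semisimple isometries of a Hadamard manifold have a nonempty common minimal set, on which each acts by a constant-displacement (Clifford) translation, and a product of commuting Clifford translations of a complete $\CAT(0)$ space is again one; applying this to the finitely many generators shows every element of $N$ is semisimple, contradicting that $N$ contains a parabolic element. Hence some $\gamma_i^{\pm1}$ is parabolic.

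The one step I expect to require care is this last fact, that commuting semisimple covering translations generate a group all of whose elements are semisimple. To make it self-contained one can iterate the observation that for commuting semisimple $\gamma,\delta$ the nearest-point projection onto the closed convex set $\Min(\gamma)$ carries a minimum point of $\delta$ into $\Min(\gamma)\cap\Min(\delta)$, so $\delta$ restricts to a semisimple isometry of $\Min(\gamma)$; running over the generators produces a common flat on which $N$ acts by translations, whence every element is semisimple. Everything else is the bookkeeping already present in the proof of Lemma~\ref{center}, together with the remark that an isometry acting freely never attains its displacement infimum.
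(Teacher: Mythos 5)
Your proof is correct and follows the paper's route exactly: iterate commutators as in Lemma \ref{center} to produce a small central element, invoke the Section 7.5 lemma of \cite{ballmangromovschroeder} to force zero infimal displacement, and use freeness of covering translations to conclude parabolicity. You usefully make explicit two points the paper's one-line proof elides---seeding the iteration at a non-central generator so that its output is genuinely a commutator, and handling the abelian case by arguing a generator must already be parabolic (essentially Lemma \ref{semisimple})---though your closing aside should read that a freely acting isometry with \emph{zero} infimal displacement never attains it, since hyperbolic covering translations certainly attain theirs.
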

\begin{proof}
This is clear if $N$ is abelian. If not, then the argument of Lemma \ref{center} gives a non-trivial central element $\rho=[a,b]$ of word length $\leq 3^m$. By the above lemma $\inf d_{\rho}=0$. Since $\rho$ acts by covering translations, it does not fix any points so it must be parabolic. 
\end{proof}

\subsection{Semisimple isometries in nilpotent groups}
An isometry $\phi$ is called {\it semisimple} if its displacement function $d_{\phi}$ does attain its infimum. The set of points in $\widetilde M$ where the infimum is attained is called the {\it minset} of $\phi$ and denoted $Min(\phi)$. If the group $N$ consists entirely of semisimple isometries, then the intersection $\cap_{\phi\in N}Min(\phi)$ is called the {\it minset of $N$}, and denoted $Min(N)$. 
\begin{lemma}[Sec. 7C of \cite{ballmangromovschroeder}]
\label{semisimple}
Suppose $N$ is a rank $r$ nilpotent group acting discretely and freely by isometries on $\widetilde M$. Then the subset of semisimple isometries $A<N$ is actually a central free abelian ($\cong\mathbb Z^r$) subgroup. Its minset is non-empty closed, convex subset which splits off a Euclidean deRham factor of rank $r'\geq r$, i.e. $Min(A)=C'\times\mathbb R^{r'}$. The $A$-action on this minset also splits. It is trivial on the first factor and is an action by translations on the second factor. 
\end{lemma}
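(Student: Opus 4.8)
The plan is to induct on $\dim\widetilde M$ (equivalently on the nilpotency class of $N$), peeling off one Euclidean de Rham factor at a time starting from the center of $N$. The geometric input is standard: if $\phi$ is semisimple then $\Min(\phi)$ is a nonempty, closed, convex subset that splits isometrically as $\Min(\phi)=C_\phi\times\mathbb R$ with $\phi$ acting trivially on $C_\phi$ and by a nontrivial translation on the $\mathbb R$-factor (the min-set splitting theorem, \cite{bridsonhaefliger, ballmangromovschroeder}); and any isometry commuting with $\phi$ preserves $\Min(\phi)$ and its splitting, acting on the $\mathbb R$-factor by a translation (it conjugates $\phi$ to $\phi$, so it can neither reverse the axis nor change the translation length). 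Iterating this over a finitely generated free abelian group $B$ of semisimple isometries shows $\Min(B)=\bigcap_{\phi\in B}\Min(\phi)$ is nonempty, closed, and convex, and splits as $C\times\mathbb R^k$ with $B$ trivial on $C$ and acting by translations on $\mathbb R^k$, so $k\ge\mathrm{rank}(B)$. This already yields the last three sentences of the lemma once we know $A$ is a central free abelian group.

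For the group-theoretic core, use that a nontrivial nilpotent group acting freely (hence torsion-free) has nontrivial center $Z(N)$, and branch on a nontrivial central element. If $Z(N)$ contains a semisimple element $z$, then by the previous paragraph all of $N$ preserves the splitting $\Min(z)=C_z\times\mathbb R$ and acts on the $\mathbb R$-factor by translations, giving a homomorphism $\tau\colon N\ra\mathbb R$; its kernel $N'$ acts freely and discretely on the lower-dimensional Hadamard manifold $C_z$, induction applies to $N'$, and one checks that the semisimple elements of $N$ are exactly those $\gamma$ whose image $\bar\gamma\in\Isom(C_z)$ is semisimple, which together with the induction exhibits them as a central free abelian subgroup, with $z$ contributing the extra $\mathbb R$. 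If instead every nontrivial element of $Z(N)$ is parabolic --- for instance the central parabolic $\rho$ furnished by Corollary \ref{paraboliccenter} --- then Lemma \ref{fixedpoint} produces $\xi\in\partial_\infty\widetilde M$ fixed by all of $N$ with every horosphere about $\xi$ preserved; convexity of the Busemann function $b_\xi$ along axes, together with the Section 7.5 lemma of \cite{ballmangromovschroeder} (a commutator commuting with both its factors has $\inf d=0$, so by freeness is parabolic or trivial), shows $N$ then has no nontrivial semisimple element, so $A=\{1\}$ and the conclusion is vacuous.

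Putting the pieces together, $A$ is a torsion-free, discrete, central abelian subgroup of finite rank, hence $A\cong\mathbb Z^r$, and the min-set statement then follows from the $B=A$ case of the first paragraph. \emph{The main obstacle is the group-theoretic core}: a priori a product of two semisimple isometries need not be semisimple, so the fact that $A$ is a subgroup --- and that it lies in the center --- is false without nilpotence and must be extracted from the induction above. The fiddly point is tracking, through the induction, that the semisimple part of the kernel $N'$ together with $z$ really exhausts the semisimple part of $N$ and stays central, and that the rank bookkeeping comes out to give $r'\ge r$.
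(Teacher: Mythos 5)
The paper does not actually prove Lemma~\ref{semisimple}: it is cited from Section 7C of Ballmann--Gromov--Schroeder with no argument of its own, so there is no in-paper proof to compare against. Evaluating your proposal on its own terms, the skeleton (induct, use that a nontrivial torsion-free nilpotent group has nontrivial center, branch on whether $Z(N)$ contains a semisimple element, and in the affirmative case peel off a Euclidean factor via the min-set splitting of a central semisimple) is the right shape. But the parabolic-center branch has a genuine gap. You assert that if every nontrivial element of $Z(N)$ is parabolic then $N$ has no nontrivial semisimple element, and offer as justification ``convexity of the Busemann function $b_\xi$ along axes, together with the Section~7.5 lemma.'' The Busemann observation only shows that the axis of a putative semisimple $a$ lies inside a single horosphere of $\xi$, and this is not by itself a contradiction: a geodesic can perfectly well lie in a horosphere (e.g.\ any line parallel to a horosphere in a product factor). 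And the 7.5 lemma has already been spent establishing that some central commutator is parabolic; it does not directly touch $a$. What is actually needed is the substantive part of the BGS argument: given a noncentral semisimple $a$, take $b$ with $[a,b]\neq 1$, note $a^b$ is a semisimple conjugate of $a$, arrange (by descending the lower central series) a stage where $a$ and $a^b$ commute, use the closest-point-projection lemma to show commuting semisimple isometries have nonempty common min-set so that $a^{-1}a^b$ is again semisimple, and derive a contradiction with the fact that this central commutator is parabolic. This chain is the real content of the lemma and cannot be dispatched in one clause.

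Two smaller remarks. In branch~1 you apply induction to $N'=\ker\tau$ acting on $C_z$, but the semisimple elements of $N$ with $\tau\neq 0$ need not lie in $\langle N',z\rangle$ since $\tau(z)$ need not generate $\tau(N)$, so ``$A'$ together with $z$ exhausts $A$'' requires an argument about the whole group $\tau(A)\le\mathbb{R}$; and $C_z$ is a closed convex subset, not in general a lower-dimensional Hadamard \emph{manifold}, so the induction should be on nilpotency class or rank rather than $\dim\widetilde M$. Finally, the lemma as printed asserts $A\cong\mathbb{Z}^r$ with $r=\mathrm{rank}(N)$; since $A\le Z(N)$ that would force $N$ abelian, so the statement is slightly imprecise (the paper in fact only ever applies it when $N$ is entirely semisimple or already abelian). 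Your proof, correctly, does not try to produce a rank-$r$ group in all cases --- in branch~2 you get $A=\{1\}$ --- but this discrepancy with the printed statement is worth flagging.
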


\subsection{The closest point projection onto a closed, convex set}
Let $C$ be a closed, convex subset of $\widetilde M$. Then, the {\it closest point projection} $p_C:\widetilde M\ra C$ sends every point of $\widetilde M$ to the point of $C$ that is closest to it. Since $\widetilde M$ is simply connected and nonpositively curved and $C$ is convex, this map is well-defined. The closest point projection is a contracting map, i.e. $d(p_C(x),p_C(y))\leq d(x,y)$. Moreover, if $x$ is projected onto the point $p_C(x)$ then all points on the geodesic segment $\overline{xp_C(x)}$ get projected onto $p_C(x)$. Let us also note that if the convex set $C$ is invariant under an isometry $\gamma$ then $p_C(\gamma x)=\gamma p_C(x)$ since the closest point projection is defined in terms of the metric. Let $U:=\{x\in\widetilde M\mid d(\gamma x,x)<\varepsilon\}$ be the set of points the isometry $\gamma$ moves by less than $\varepsilon$.
\begin{lemma}
Suppose $C$ is a closed, convex subset of $\widetilde M$ invariant under an isometry $\gamma$ of $\widetilde M$. Then $\inf d_{\gamma}=\inf d_{\gamma|_C}$ and image under the closest point projection $p_C$ of the sublevel set $U$ and the minset of $\gamma$ is given by
\begin{eqnarray*}
p_C(U)&=&C\cap U,\\
p_C(Min(\gamma))&=&C\cap Min(\gamma),\\
&=&Min(\gamma|_C).
\end{eqnarray*}
\end{lemma}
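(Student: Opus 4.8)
The plan is to establish the three identities one at a time, moving the $\inf$ statement to the front since the others depend on it. First I would show $\inf d_\gamma = \inf d_{\gamma|_C}$. The inequality $\inf d_\gamma \leq \inf d_{\gamma|_C}$ is trivial since $C \subseteq \widetilde M$. For the reverse, take any $x \in \widetilde M$; I claim $d(\gamma\, p_C(x), p_C(x)) \leq d(\gamma x, x)$. Here is the key geometric input: since $C$ is $\gamma$-invariant, $p_C$ commutes with $\gamma$, so $\gamma\, p_C(x) = p_C(\gamma x)$; and since the closest point projection onto a convex set in a nonpositively curved simply connected manifold is distance nonincreasing (as recalled just above the lemma), $d(p_C(\gamma x), p_C(x)) \leq d(\gamma x, x)$. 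Combining, $d_{\gamma|_C}(p_C(x)) \leq d_\gamma(x)$ for all $x$, hence $\inf d_{\gamma|_C} \leq \inf d_\gamma$.

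Next, for $p_C(U) = C \cap U$: the inclusion $C \cap U \subseteq p_C(U)$ is immediate because $p_C$ fixes points of $C$. Conversely, if $x \in U$, i.e. $d(\gamma x, x) < \varepsilon$, then by the displacement estimate from the previous paragraph $d(\gamma\, p_C(x), p_C(x)) \leq d(\gamma x, x) < \varepsilon$, and $p_C(x) \in C$, so $p_C(x) \in C \cap U$; thus $p_C(U) \subseteq C \cap U$. The same argument with $\varepsilon$ replaced by the (now established) common infimum, and with the strict inequality replaced by equality, yields $p_C(Min(\gamma)) = C \cap Min(\gamma)$: if $d(\gamma x, x) = \inf d_\gamma$ then $d(\gamma\, p_C(x), p_C(x)) \leq \inf d_\gamma$, forcing equality, so $p_C(x) \in C \cap Min(\gamma)$; and $C \cap Min(\gamma) \subseteq p_C(Min(\gamma))$ because $p_C$ fixes $C$.

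Finally, $C \cap Min(\gamma) = Min(\gamma|_C)$: a point $y \in C$ lies in $Min(\gamma|_C)$ iff $d_\gamma(y) = \inf d_{\gamma|_C} = \inf d_\gamma$ (using the first identity), which is precisely the condition $y \in C \cap Min(\gamma)$. I expect no serious obstacle here; the only point requiring care is the interplay between the two facts that $p_C$ is $\gamma$-equivariant and that $p_C$ is $1$-Lipschitz, which together give the single displacement inequality $d_{\gamma|_C}(p_C(x)) \leq d_\gamma(x)$ that drives every part of the proof. One should also note in passing that $C \cap Min(\gamma)$ is nonempty, since $Min(\gamma|_C)$ is nonempty whenever $\gamma|_C$ is semisimple — but if one does not wish to assume semisimplicity of $\gamma$, the identities are still correct as stated (with both sides possibly empty), so I would simply record them without a nonemptiness claim.
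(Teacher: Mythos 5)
Your proposal is correct and follows essentially the same route as the paper: both rest on the single displacement inequality $d(p_C(x),\gamma p_C(x))=d(p_C(x),p_C(\gamma x))\leq d(x,\gamma x)$, obtained by combining $\gamma$-equivariance and the $1$-Lipschitz property of $p_C$, and then read off all three identities from it. The only difference is that you spell out the deductions in more detail than the paper does.
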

\begin{proof}
The inequality 
$$
d(p_C(x),\gamma p_C(x))=d(p_C(x),p_C(\gamma x))\leq d(x,\gamma x)
$$ 
shows that $p_C(U)=C\cap U$ and $p_C(Min(\gamma))=C\cap Min(\gamma)$. It also shows that $\inf d_{\gamma}=\inf d_{\gamma\mid_C}$ which implies that $C\cap Min(\gamma)=Min(\gamma\mid_C)$.   
\end{proof}

\subsection{Splitting the minset and isometries acting on it}
We need the following description of intersections of minsets in the semisimple portion of the argument (see subsection \ref{semisimpleassembly}).
\begin{lemma}
\label{semisimplesplittinglemma}
Suppose $A$ and $B$ are commuting abelian groups of semisimple isometries and let $r$ be the rank of $A$. Then the minset of $A$ splits as $Min(A)=C\times\mathbb R^r$, $B$ acts on this minset and its action also splits. The closest point projection of the minset of $B$ to the minset of $A$ can be expressed as  
$$
p_{Min(A)}(Min(B))=Min(A)\cap Min(B)=D\times\mathbb R^r
$$ 
where $D$ is the minset of the $B$-action on $C$.
\end{lemma}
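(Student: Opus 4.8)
The plan is to push everything down to the product structure of $Min(A)$ supplied by Lemma \ref{semisimple} and then quote the lemma on closest point projections proved above.

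First I would fix the splitting. By Lemma \ref{semisimple} applied to the abelian, rank-$r$ group $A$, the set $Min(A)$ is a nonempty closed convex subset of $\widetilde M$ which splits as a metric product $Min(A)=C'\times\mathbb{R}^{r'}$ with $r'\ge r$, where $\mathbb{R}^{r'}$ is the maximal Euclidean de Rham factor, $A$ acting trivially on $C'$ and by translations on $\mathbb{R}^{r'}$. Since the $A$-action on $\mathbb{R}^{r'}$ is discrete and free and $A\cong\mathbb{Z}^r$, the translations coming from $A$ form a lattice $\Lambda$ of rank $r$; its $\mathbb{R}$-span $V=\mathrm{span}(\Lambda)$ is an $r$-dimensional subspace, and writing $\mathbb{R}^{r'}=V\times V^{\perp}$ and setting $C:=C'\times V^{\perp}$, $\mathbb{R}^r:=V$, we get $Min(A)=C\times\mathbb{R}^r$ with $A$ trivial on $C$ and acting cocompactly by translations on $\mathbb{R}^r$. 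This is the splitting claimed in the statement.

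Next I would check that $B$ respects this splitting. Since $B$ commutes with $A$, $d_\alpha(\beta x)=d(\beta x,\alpha\beta x)=d(\beta x,\beta\alpha x)=d_\alpha(x)$, so each $\beta\in B$ preserves every $Min(\alpha)$, hence $Min(A)=\bigcap_\alpha Min(\alpha)$. An isometry of the complete $\CAT(0)$ space $Min(A)$ preserves its maximal Euclidean factor, so $\beta$ acts as $\beta_{C'}\times\beta_{\mathbb{R}^{r'}}$ with $\beta_{\mathbb{R}^{r'}}\colon v\mapsto Lv+w$, $L\in\OO(r')$; commuting with translation by every $\lambda\in\Lambda$ forces $L\lambda=\lambda$, so $L$ fixes $V$ pointwise, preserves $V^{\perp}$, and $\beta$ acts on $C\times\mathbb{R}^r$ as $\beta_C\times\tau_\beta$ with $\beta_C\in\Isom(C)$ and $\tau_\beta$ a fixed translation of $\mathbb{R}^r$. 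In particular $B$ acts on $C$ and this action splits off, as asserted.

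The final step is the projection identity. The equality $p_{Min(A)}(Min(B))=Min(A)\cap Min(B)$ holds because $p_{Min(A)}$ fixes $Min(A)$ pointwise (the inclusion $\supseteq$), and because $Min(B)\subseteq Min(\beta)$ together with the closest-point-projection lemma applied to the $\beta$-invariant closed convex set $Min(A)$ gives $p_{Min(A)}(Min(B))\subseteq\bigcap_\beta p_{Min(A)}(Min(\beta))=\bigcap_\beta(Min(A)\cap Min(\beta))$ (the inclusion $\subseteq$). That same lemma gives $Min(A)\cap Min(\beta)=Min(\beta|_{Min(A)})$; and on $C\times\mathbb{R}^r$ the displacement of $\beta_C\times\tau_\beta$ at $(c,v)$ equals $\sqrt{d_C(c,\beta_Cc)^2+|\tau_\beta|^2}$, minimized exactly on $Min(\beta_C)\times\mathbb{R}^r$. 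Intersecting over $\beta\in B$ then yields
$$
Min(A)\cap Min(B)=\bigcap_{\beta\in B}\bigl(Min(\beta_C)\times\mathbb{R}^r\bigr)=\Bigl(\bigcap_{\beta\in B}Min(\beta_C)\Bigr)\times\mathbb{R}^r=Min(B|_C)\times\mathbb{R}^r=D\times\mathbb{R}^r .
$$
The hard part will be the splitting of the $B$-action: one must know $B$ respects the refined product $C\times\mathbb{R}^r$ and acts on $\mathbb{R}^r$ by pure translations, since that is exactly what makes the $\beta$-displacement a Pythagorean sum whose $\mathbb{R}^r$-term is independent of the basepoint. This relies on the canonicity of the Euclidean de Rham factor and on the fact that the linear parts of $B$ must centralize the translation lattice of $A$; once that is in hand, the remaining steps are bookkeeping with the closest-point-projection lemma.
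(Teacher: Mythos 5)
Your proof is correct and follows the same overall strategy as the paper's: split $Min(A)$ off the Euclidean de Rham factor, show $B$ acts in a split manner with a pure translation on the $\mathbb R^r$ part, and then combine the Pythagorean displacement formula with the closest-point-projection lemma. The one step you handle differently is showing that $B$ preserves the refined orthogonal decomposition $\mathbb R^{r'}=\mathbb R^r\times\mathbb R^{r'-r}$ and acts by translations on the first factor. You argue via Euclidean linear algebra: write each $\beta$ on $\mathbb R^{r'}$ as $v\mapsto Lv+w$ and observe that commuting with the lattice $\Lambda$ forces $L$ to fix $\mathrm{span}(\Lambda)$ pointwise. The paper instead argues geometrically that $B$ permutes convex hulls of $A$-orbits (since $bAx=Abx$), hence descends to an action on the parameter space $\mathbb R^{r'-r}$ and on orthogonal complements. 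Both are correct; your linear-algebra version is a bit more explicit about \emph{why} the $\mathbb R^r$-component of $\beta$ is a pure translation, while the paper's is phrased more invariantly.

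Two small caveats. First, you assume $A\cong\mathbb Z^r$ acting discretely and freely; the lemma as stated allows $A$ to have torsion, and the paper explicitly splits $A=\mathbb Z^r\times F$ and notes $F$ fixes $Min(A)$ pointwise. In the applications $A$ and $B$ do come from free discrete covering groups, so nothing breaks, but you should say explicitly that a finite subgroup of $A$ acts trivially on $Min(A)$, and that discreteness of the action on $Min(A)$ (inherited from the ambient action) makes the translation image a genuine lattice of rank $r$. Second, your Pythagorean displacement formula $\sqrt{d_C(c,\beta_C c)^2 + |\tau_\beta|^2}$ tacitly uses that the splitting $Min(A)=C\times\mathbb R^r$ is a \emph{metric} product and that $\tau_\beta$ is basepoint-independent --- you did establish the latter, but it's worth flagging, since that independence is exactly what makes the minimum occur on all of $\mathbb R^r$ and yields $D\times\mathbb R^r$.
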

\begin{proof} 
Write $A=\mathbb Z^r\times F$ where $F$ is a finite abelian group. 
\begin{itemize}
\item
The finite group $F$ fixes $Min(A)$. 
\item
Since $A$ and $B$ commute
$B$ acts on $Min(A)$, so the closest point projection maps $p_{Min(A)}(Min(B))=Min(A)\cap Min(B)$ showing that the minsets of $A$ and $B$ intersect. 
\item
The minset $Min(A)$ splits as a product $C'\times\mathbb R^{r'}$
where $r'\geq r$ and the second factor is the Euclidean part of the deRham decomposition. Any isometry of the minset splits. 
\item
$B$ acts on $Min(A)$ (since $A$ and $B$ commute) so it also acts on $\mathbb R^{r'}$.
\item
The Euclidean part splits orthogonally as $\mathbb R^{r'}=\mathbb R^{r'-r}\times\mathbb R^r$ into convex hulls of $A$-orbits (each $\cong\mathbb R^r$) parametrized by $\mathbb R^{r'-r}$. 
\item
The $B$-action on $\mathbb R^{r'}$ sends convex hulls of $A$-orbits to convex hulls of $A$-orbits\footnote{Since $bAx=Abx$.}, so it defines an action $x\mapsto \gamma\cdot x$ on the space of such convex hulls $\mathbb R^{r'-r}$ sending $\{x\}\times\mathbb R^r$ to $\{\gamma\cdot x\}\times\mathbb R^r$.  
\item
Since $B$ acts by isometries on the Euclidean space $\mathbb R^{r'}$ it also sends any orthogonal complement $\mathbb R^{r'-r}\times y$ of $\{x\}\times\mathbb R^r$ to another orthogonal complement $\mathbb R^{r'-r}\times\{y'\}$ and in this way defines a $B$-action $y\mapsto \gamma\cdot y$ on the space of orthogonal complements $\mathbb R^{r'-r}$. 
\item
So, the $B$-action on $\mathbb R^{r'}=\mathbb R^{r'-r}\times\mathbb R^r$ splits as $\gamma(x,y)=(\gamma\cdot x,\gamma\cdot y)$. 
\item
Since $Min(A)\cap Min(B)$ is a convex set invariant under the group $\mathbb Z^r$, it splits as $D\times\mathbb R^r$, where $D$ is a convex subset of $C:=C'\times\mathbb R^{r'-r}$. 
\item
By the previous lemma, the minset $Min(B|_{Min(A)})$ of $B$ acting on $Min(A)$ is the intersection $Min(A)\cap Min(B)=D\times\mathbb R^r$. Since the $B$-action on $Min(A)=C\times\mathbb R^r$ splits, this minset also splits as the product of the minset of $B$ acting on $C$ with the minset of $B$ acting on $\mathbb R^r$, which turns out to be all of $\mathbb R^r$ (because it is an $A$-invariant convex set).
\end{itemize}  
\end{proof}

\section{Algebraic topology}
Good references for this section are Section 4.G of \cite{hatcher} and Appendix E of \cite{davisbook}.
\subsection{The nerve of a cover}
Let $\{X_i\}$ be an open cover of a topological space $X$, i.e. $X = \displaystyle{\cup_i X_i}$. The \emph{nerve} of this cover is a simplicial complex $N$ defined as follows.
\begin{itemize}
\item The vertices of $N$ correspond to the sets $X_i$, so they are labeled by the indices $i$. 
\item If for some $i_0, ..., i_j$, the intersection $X_{i_0}\cap ... \cap X_{i_j}\ne \emptyset$, then there is a $j$-simplex $\alpha$ in $N$ with vertices $i_0,\dots,i_j$. We denote the corresponding intersection by $X_{\alpha}=\cap_{i\in\alpha}X_{\alpha}$. 
\end{itemize} 
\subsection{The barycentric subdivision of the nerve}
The barycentric subdivision $\mathcal X$ of the nerve of $\{X_i\}$ has a useful description in terms of increasing chains of simplices. Namely, a $k$-simplex in the barycentric subdivision is given by a strictly increasing chain 
\begin{equation}
\label{simplexchain}
\alpha_0\subset\alpha_1\subset\dots\subset\alpha_k
\end{equation}
of simplices in the nerve, and the faces of the $k$-simplex are subchains.  
Corresponding to this $k$-simplex is a decreasing (but maybe not strictly decreasing!) chain of subsets 
\begin{equation}
\label{subsetchain}
X_{\alpha_0}\supset X_{\alpha_1}\supset\dots\supset X_{\alpha_k}.
\end{equation}
\subsection{The reduced nerve}
If the open cover $\{X_i\}$ is locally finite then the barycentric subdivision $\mathcal X$ deformation retracts onto a subcomplex $\overline{\mathcal X}$ 
which has the property that for any $k$-simplex in $\overline{\mathcal X}$ the corresponding chain of subsets (\ref{subsetchain}) is strictly decreasing. 
The subset $\overline{\mathcal X}$ is called the {\it reduced nerve} of the cover $\{X_i\}$. It can be described in the following way. 

For any simplex $\alpha$ in the nerve, let 
$$
\overline{\alpha}=\{i\mid X_i\supset X_{\alpha}\}.
$$ 
This is a (possibly bigger) simplex in the nerve\footnote{The cover needs to be locally finite so that each $\overline{\alpha}$ is finite.}, since $\cap_{i\in\overline{\alpha}}X_i=X_{\alpha}\not=\emptyset$. Now, look at the map
\begin{eqnarray*}
p:\mathcal X&\ra&\mathcal X,\\
\alpha_0\subset\alpha_1\subset\dots\subset\alpha_k&\mapsto&\overline{\alpha_0}\subset\overline{\alpha_1}\subset\dots\subset\overline{\alpha_k},
\end{eqnarray*}
where we throw out, if necessary, any repetitions in the chain $\overline{\alpha_1}\subset\dots\subset\overline{\alpha_k}$. The map $p$ is retraction onto its image, its fibres are contractible, and it is not hard to see that it can be extended to a deformation retraction\footnote{We don't actually need to use this last fact.}. The reduced nerve is the image of this map 
$$
\overline{\mathcal X}:=p(\mathcal X).
$$

\begin{figure}
\centering
\includegraphics[scale=0.23]{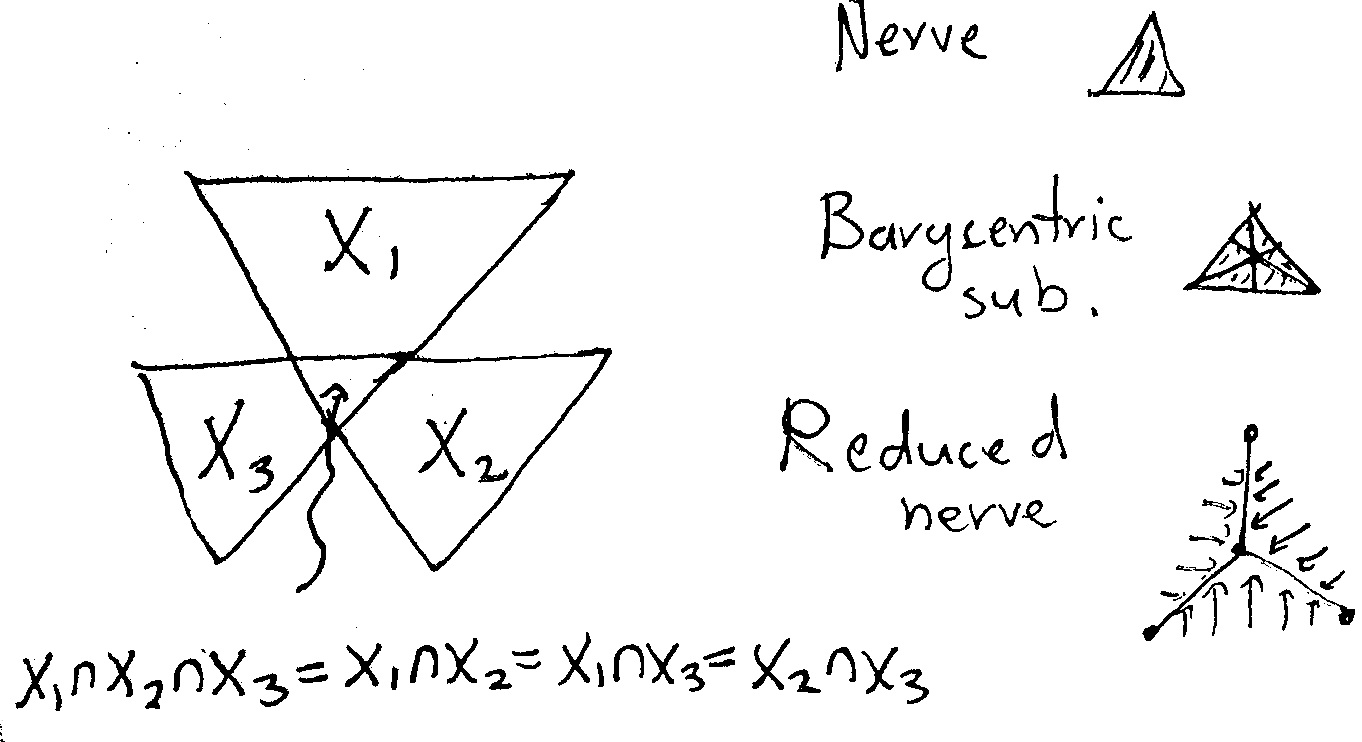}
\end{figure}

By construction, if $\alpha$ is a subsimplex of $\beta$ such that $\overline{\alpha}$ is a proper subsimplex of $\overline{\beta}$ then there is $X_i$ that contains $X_{\overline{\beta}}$ but does not contain $X_{\overline{\alpha}}$, which implies $X_{\overline{\alpha}}$ is strictly bigger than $X_{\overline{\beta}}$.

\subsection{\label{Delta} Relating the nerve of the cover $\{X_i\}$ to the space $X$ via a fattening $\Delta X$}
The nerve is related to the space $X$ via an auxiliary space $\Delta X$ that is a sort of ``fattened'' version of $X$. The space $\Delta X=\coprod_{\alpha}(X_{\alpha}\times\alpha)/\sim$ is the quotient space of the disjoint union of all the possible products $X_{\alpha}\times\alpha$ by identifications along the faces $\partial_j\alpha$ of the simplices $\alpha$ via the inclusions $X_{\alpha}\times \partial_j\alpha\hookrightarrow X_{\partial_j\alpha}\times\partial_j\alpha.$ 
\begin{itemize}
\item
Note that we have a collapse map $\Delta X\ra X$ obtained by collapsing all the simplices $\alpha$. This map is a homotopy equivalence. (See 4G.2 of \cite{hatcher}.)
\item
There is a map $\Delta X\ra\mathcal X$ from the fattening to the barycentric subdivision of the nerve, induced by sending $X_{\alpha}\times\alpha$ to $\alpha$.
\item
There is also a reduced thickening $\overline\Delta X$ and a map $\Delta X\ra\overline\Delta X$ obtained by first taking the barycentric subdivisions 
$$
\coprod_{\alpha_0\subset\dots\subset\alpha_i} X_{\alpha_i}\times\{\alpha_0\subset\dots\subset\alpha_i\}/\sim
$$
of the simplices $\alpha$ occurring in $\Delta X$ 
(geometrically, this process doesn't change anything) and then applying the map $p$ to get 
$$
\overline{\Delta} X:=\coprod_{\overline{\alpha_0}\subset\dots\subset\overline{\alpha_i}} X_{\alpha_i}\times\{\overline{\alpha_0}\subset\dots\subset\overline{\alpha_i}\}/\sim.
$$ 
The reduced thickening $\overline{\Delta}X$ can also be described as the inverse image of the reduced nerve $\overline{\mathcal X}$ under the projection $\Delta X\ra\mathcal X$. 
\end{itemize}
\begin{lemma}\cite[Proposition 4G.1]{hatcher}
\label{delta}
Suppose $\{X_i\}$ is an open cover of $X$. If every finite intersection $X_{\alpha}$ in this cover is either empty or contractible then the projection $\Delta X\ra\mathcal X$ is a homotopy equivalence.  
\end{lemma}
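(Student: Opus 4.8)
The plan is to replace $\mathcal X$, the barycentric subdivision of the nerve, by the nerve $N$ itself (the two are canonically homeomorphic) and to prove that the resulting projection $\pi\colon\Delta X\ra N$ --- which under this identification is the map $\Delta X\ra\mathcal X$ of the statement --- is a homotopy equivalence, by an induction up the skeleta of $N$. This is essentially the argument of Proposition 4G.1 in \cite{hatcher}.

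First I would set up the filtration. Let $N^{(k)}$ be the $k$-skeleton of $N$ and put $\Delta X^{(k)}:=\pi^{-1}(N^{(k)})$. Unwinding $\Delta X=\coprod_\alpha(X_\alpha\times\alpha)/\sim$, the passage from $\Delta X^{(k-1)}$ to $\Delta X^{(k)}$ is a pushout: for each $k$-simplex $\alpha$ of $N$ one glues in $X_\alpha\times\bar\alpha$ along a map $X_\alpha\times\partial\bar\alpha\ra\Delta X^{(k-1)}$ which on a face $\partial_j\bar\alpha$ is the inclusion $X_\alpha\times\partial_j\bar\alpha\hookrightarrow X_{\partial_j\alpha}\times\partial_j\bar\alpha\subseteq\Delta X^{(k-1)}$; this makes sense because if $\beta\subseteq\alpha$ is a face then $X_\beta=\bigcap_{i\in\beta}X_i\supseteq\bigcap_{i\in\alpha}X_i=X_\alpha$. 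This pushout lies over the analogous pushout expressing $N^{(k)}$ as $N^{(k-1)}$ with the closed $k$-simplices $\coprod_\alpha\bar\alpha$ attached along $\coprod_\alpha\partial\bar\alpha$, so $\pi$ restricts to a map of pushout squares at each stage.

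Next I would run the induction. The base case is $\Delta X^{(0)}=\coprod_i X_i\ra\coprod_i\{i\}=N^{(0)}$, a homotopy equivalence because each $X_i$, being a nonempty finite intersection in the cover, is contractible. For the inductive step, in the map of pushout squares above the three vertical comparison maps are $\Delta X^{(k-1)}\ra N^{(k-1)}$ (a homotopy equivalence by induction), $\coprod_\alpha X_\alpha\times\bar\alpha\ra\coprod_\alpha\bar\alpha$, and $\coprod_\alpha X_\alpha\times\partial\bar\alpha\ra\coprod_\alpha\partial\bar\alpha$; the last two are homotopy equivalences since every $X_\alpha$ appearing is contractible. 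As the legs $\partial\bar\alpha\hookrightarrow\bar\alpha$ and $X_\alpha\times\partial\bar\alpha\hookrightarrow X_\alpha\times\bar\alpha$ are cofibrations, the gluing lemma for homotopy equivalences gives that the induced map of pushouts $\Delta X^{(k)}\ra N^{(k)}$ is a homotopy equivalence. Finally, since the skeletal inclusions $N^{(k-1)}\hookrightarrow N^{(k)}$ and $\Delta X^{(k-1)}\hookrightarrow\Delta X^{(k)}$ are closed cofibrations and $N=\operatorname{colim}_k N^{(k)}$, $\Delta X=\operatorname{colim}_k\Delta X^{(k)}$ with the colimit topologies, a level-wise homotopy equivalence of these telescopes is a homotopy equivalence; hence $\pi\colon\Delta X\ra N$ is a homotopy equivalence.

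The step I expect to be the main obstacle is the clean invocation of the gluing lemma: one must check that the cofibration hypotheses genuinely hold in each square and that the vertical maps are honest homotopy equivalences rather than merely weak ones --- this is precisely where contractibility of the $X_\alpha$ and the product structure $X_\alpha\times\bar\alpha$ are used --- together with the bookkeeping that identifies $\pi^{-1}(\bar\alpha)$ with $(X_\alpha\times\bar\alpha)\cup_{X_\alpha\times\partial\bar\alpha}\pi^{-1}(\partial\bar\alpha)$, where the inclusions $X_\alpha\subseteq X_\beta$ for faces $\beta\subseteq\alpha$ enter. The skeletal induction and the passage to the colimit are then routine.
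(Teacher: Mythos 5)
Your proof is correct and is essentially the argument behind Hatcher's Proposition 4G.1, which is exactly what the paper cites for this lemma without reproving it. The skeletal induction with pushout squares, the gluing lemma for homotopy equivalences (using contractibility of the $X_\alpha$ so that each $X_\alpha\times\bar\alpha\ra\bar\alpha$ is a homotopy equivalence, and the product cofibrations for the legs), and the passage to the colimit over skeleta all match the cited source's approach.
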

Since the projection $\Delta X\ra X$ is always a homotopy equivalence (Proposition 4G.2 of \cite{hatcher}) and the nerve of the cover $\{X_i\}$ is homeomorphic to its barycentric subdivision $\mathcal X$, we get the following corollary. 

\begin{corollary}\cite[Cor. 4G.3]{hatcher}
\label{hequiv}
Let $\{X_i\}$ be an open cover of $X$ whose finite intersections are either empty or contractible. Then $X$ is homotopy equivalent to the nerve of the cover $\{X_i\}$.
\end{corollary}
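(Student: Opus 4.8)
The plan is to compose homotopy equivalences that are already in place, exactly as indicated in the sentence preceding the statement. First, the collapse map $\Delta X\to X$ is a homotopy equivalence for \emph{any} open cover (Proposition 4G.2 of \cite{hatcher}, recalled in subsection \ref{Delta}), so it applies here with no extra hypothesis. Second, the present hypothesis — that every finite intersection $X_\alpha$ is empty or contractible — is precisely what Lemma \ref{delta} requires, so the projection $\Delta X\to\mathcal X$ onto the barycentric subdivision of the nerve is a homotopy equivalence. Third, a simplicial complex is homeomorphic to its barycentric subdivision, so $\mathcal X$ is homeomorphic to the nerve $N$ of $\{X_i\}$. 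Composing,
$$
X\simeq\Delta X\simeq\mathcal X\cong N,
$$
which is the assertion.

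I do not expect any real obstacle, since all the content has been assembled: Lemma \ref{delta} (which I may assume) supplies the only nontrivial equivalence, and the remaining two links are formal. The one point deserving a word of care is that $\Delta X\to X$ is a homotopy equivalence \emph{without} the contractibility hypothesis, so the two inputs are logically independent — contractibility of the intersections enters only in passing from $\Delta X$ to $\mathcal X$, not in collapsing $\Delta X$ onto $X$. If one wished to avoid citing Proposition 4G.2 of \cite{hatcher}, the step to flesh out would be a direct construction of a deformation of $\Delta X$ onto $X$ that respects the simplex coordinates on each piece $X_\alpha\times\alpha$; but since $\Delta X$ here is literally Hatcher's object, I would simply quote it.
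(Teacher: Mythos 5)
Your argument is exactly the one the paper uses: compose the always-true equivalence $\Delta X\simeq X$ (Proposition 4G.2 of Hatcher), the equivalence $\Delta X\simeq\mathcal X$ from Lemma \ref{delta} under the contractibility hypothesis, and the homeomorphism of a complex with its barycentric subdivision. Correct, and no difference from the paper's route.
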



\subsection{Mayer-Vietoris assembly}
For future use we mention that even if the finite intersections of a cover do have some non-trivial topology, the homology of the reduced nerve can sometimes be related to the homology of the cover via a Mayer-Vietoris assembly argument which can be packaged into the spectral sequence
$$
\label{mvss}
E^2_{j,k}=H_{k}(\overline{\mathcal X};H_j(X_{\alpha}))\implies H_{j+k}(\cup_iX_i).
$$
This works when $\{X_i\}$ is a cover of a space by open sets and also when $\{X_i\}$ is a cover of a CW-complex by subcomplexes.
The following lemma illustrates how this can be used. 
\begin{lemma}
\label{basicassembly}
If for every $k$-simplex $\alpha$ in the reduced nerve we have
$\overline H_{\geq n-k}(X_{\alpha})=0$, then $H_{\geq n}(\overline{\mathcal X})=0$ implies $H_{\geq n}(\cup_iX_i)=0$.  
\end{lemma}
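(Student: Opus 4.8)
The plan is to feed the hypotheses into the Mayer--Vietoris assembly spectral sequence $E^2_{j,k}=H_k(\overline{\mathcal X};H_j(X_\alpha))\Rightarrow H_{j+k}(\cup_iX_i)$ recalled just above. It is a first-quadrant spectral sequence, so for each $m$ the group $H_m(\cup_iX_i)$ carries a finite filtration whose successive quotients are subquotients of the terms $E^2_{j,k}$ with $j+k=m$. Hence it is enough to show that $E^2_{j,k}=0$ whenever $j+k\geq n$. I will treat the rows $j\geq 1$ and the bottom row $j=0$ separately.

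For $j\geq 1$ the chain groups computing that row already vanish in the relevant range. Indeed, $E^1_{j,k}=\bigoplus_\alpha H_j(X_\alpha)$, the sum taken over $k$-simplices $\alpha$ of $\overline{\mathcal X}$; if $j+k\geq n$ then $j\geq n-k$, and since $j\geq 1$ we have $H_j(X_\alpha)=\overline H_j(X_\alpha)$, which is $0$ by the hypothesis $\overline H_{\geq n-k}(X_\alpha)=0$. Thus $E^1_{j,k}=0$, and therefore $E^2_{j,k}=0$, for all $j\geq 1$ with $j+k\geq n$.

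The bottom row $j=0$ is the only place where disconnectedness of the intersections $X_\alpha$ intervenes, and it is where the reduced--homology form of the hypothesis does its work. Here $E^2_{0,k}=H_k(\overline{\mathcal X};\mathcal H_0)$, where $\mathcal H_0$ is the coefficient system $\alpha\mapsto H_0(X_\alpha)$. The augmentations $H_0(X_\alpha)\ra\mathbb Z$ (surjective, since every $X_\alpha$ occurring in the nerve is nonempty) assemble into a short exact sequence of coefficient systems
$$
0\ra\mathcal K\ra\mathcal H_0\ra\underline{\mathbb Z}\ra0,\qquad \mathcal K(\alpha)=\ker\big(H_0(X_\alpha)\ra\mathbb Z\big)=\overline H_0(X_\alpha),
$$
with $\underline{\mathbb Z}$ the constant system. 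The associated long exact sequence reads $\cdots\ra H_k(\overline{\mathcal X};\mathcal K)\ra H_k(\overline{\mathcal X};\mathcal H_0)\ra H_k(\overline{\mathcal X})\ra\cdots$. For $k\geq n$ the third term vanishes by hypothesis. For the first term: if $\alpha$ is a $k$-simplex of $\overline{\mathcal X}$ with $k\geq n$ then $n-k\leq 0$, so the hypothesis $\overline H_{\geq n-k}(X_\alpha)=0$ in particular gives $\overline H_0(X_\alpha)=0$; hence $\mathcal K$ vanishes on every simplex of dimension $\geq n$, the simplicial chain complex computing $H_*(\overline{\mathcal X};\mathcal K)$ is zero in degrees $\geq n$, and so $H_{\geq n}(\overline{\mathcal X};\mathcal K)=0$. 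Therefore $E^2_{0,k}=H_k(\overline{\mathcal X};\mathcal H_0)=0$ for $k\geq n$.

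Putting the two cases together, $E^2_{j,k}=0$ for every $j+k\geq n$, hence $E^\infty_{j,k}=0$ in that range, hence $H_{\geq n}(\cup_iX_i)=0$. The only genuine obstacle is the $j=0$ row: the reduced homology in the hypothesis is exactly what makes the error term $\mathcal K$ between $H_*(\overline{\mathcal X};\mathcal H_0)$ and $H_*(\overline{\mathcal X})$ disappear above dimension $n-1$; with unreduced $0$-th homology one could not conclude, because disconnected deep intersections would contribute.
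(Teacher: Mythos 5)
Your proof is correct and takes essentially the same route as the paper: both feed the hypotheses into the Mayer--Vietoris assembly spectral sequence and observe that in total degree $\geq n$ the only possibly nonzero $E^2$ terms sit on the $j=0$ row. The one place you add something is the bottom row: the paper simply asserts $E^2_{0,k}=H_k(\overline{\mathcal X})$ for $k\geq n$, whereas your short exact sequence $0\to\mathcal K\to\mathcal H_0\to\underline{\mathbb Z}\to 0$ of coefficient systems supplies the justification, using that the hypothesis for $k\geq n$ forces $\overline H_0(X_\alpha)=0$ and hence $\mathcal K$ to vanish on all simplices of dimension $\geq n$.
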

\begin{proof}
The condition $H_{\geq n-k}(X_{\alpha})=0$ implies that for $j+k\geq n$ the only possible non-zero $E^2_{j,k}$ terms occur on the $j=0$ line and are $E^2_{0,k}=H_k(\overline{\mathcal X})$. So, the spectral sequence implies that in dimensions $\geq n$ the homology of the union $\cup_iX_i$ is bounded above by the homology of the reduced nerve $\overline{\mathcal X}$. The lemma follows from this.  
\end{proof}


\subsection{\label{htriv}Homotopically trivial group actions}
Next, suppose that a group $N$ acts on $X$ by covering translations and preserves each of the individual sets $X_i$. 
Then, the $N$-action fits into the commutative diagram
$$
\begin{array}{ccc}
X&\stackrel{\cdot n}\ra &X\\
\uparrow&&\uparrow\\
\Delta X&\stackrel{\cdot n}\ra&\Delta X\\
\downarrow&&\downarrow\\
\mathcal X&\stackrel{\cdot n}=&\mathcal X,
\end{array}
$$
with the vertical maps being homotopy equivalences, and the horizontal maps the actions of an element $n\in N$. The bottom map is just the identity so if $\Delta X\ra\mathcal X$ is a homotopy equivalence, then the top map is homotopically trivial. Consequently, we get the following corollary. 
\begin{corollary}
Suppose we have an open cover $X=\cup X_i$ in which every finite intersection is either empty or contractible. If a group $N$ acts on $X$ by covering translations and preserves each of the individual $X_i,$ then its action on $X$ is homotopically trivial. In particular, $N$ acts trivially on the homology groups $H_*(X)$. 
\end{corollary}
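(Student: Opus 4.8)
The plan is to read the conclusion off the commutative diagram in subsection~\ref{htriv}, combined with Lemma~\ref{delta}. First I would observe that the hypothesis on the cover is exactly what upgrades the right-hand column of that diagram to homotopy equivalences: the collapse map $\Delta X\ra X$ is a homotopy equivalence for any open cover (Proposition 4G.2 of \cite{hatcher}), and since every nonempty finite intersection $X_\alpha$ is contractible, Lemma~\ref{delta} gives that the projection $p\colon\Delta X\ra\mathcal X$ to the barycentric subdivision of the nerve is a homotopy equivalence as well.

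Next I would check that the bottom arrow of the diagram is genuinely the identity map of $\mathcal X$, not merely some self-homeomorphism. Because $N$ preserves each $X_i$, an element $n\in N$ fixes every vertex of the nerve, and it carries the simplex spanned by $i_0,\dots,i_j$ to itself: that simplex is present precisely when $X_{i_0}\cap\dots\cap X_{i_j}\neq\emptyset$, and $n$ maps this intersection onto itself. Hence $n$ induces the identity simplicial map on the nerve, and therefore on $\mathcal X$ and on the whole bottom arrow.

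The heart of the matter is then a short diagram chase. From $p\circ(\cdot n)_{\Delta X}=(\cdot n)_{\mathcal X}\circ p=p$ and a homotopy inverse $q$ of $p$, we get $(\cdot n)_{\Delta X}\simeq q\circ p\circ(\cdot n)_{\Delta X}=q\circ p\simeq\id_{\Delta X}$; transporting this along the collapse $\Delta X\ra X$ (using that $(\cdot n)_X$ commutes with the collapse and that the collapse is a homotopy equivalence) yields $(\cdot n)_X\simeq\id_X$. So each element of $N$ acts on $X$ by a map homotopic to the identity, which is the meaning of ``homotopically trivial'', and applying $H_*(-)$ shows $N$ acts trivially on $H_*(X)$.

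I do not expect a real obstacle: the argument is essentially the sentence preceding the corollary in subsection~\ref{htriv}. The only two points needing any care are those isolated above — that $N$ induces the \emph{literal} identity on $\mathcal X$ rather than just some automorphism, and the elementary ``two-out-of-three'' fact that a self-map commuting with a homotopy equivalence onto a space on which it acts trivially is itself homotopic to the identity. The hypothesis that $N$ acts by covering translations is used only to produce the equivariant structure on $\Delta X$ set up in subsection~\ref{htriv}, and plays no further role.
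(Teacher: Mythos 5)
Your proposal is correct and takes essentially the same approach as the paper: the paper's own justification is exactly the sentence preceding the corollary, which reads the conclusion off the commutative diagram of subsection~\ref{htriv} using Lemma~\ref{delta} and Proposition 4G.2 of Hatcher. Your write-up merely makes explicit the two small points the paper leaves tacit (that $N$ induces the literal identity on $\mathcal X$, and the two-out-of-three homotopy argument).
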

This can be used to show that some homology cycles in $X$ bound. The local vanishing lemma below is one such result that is a crucial ingredient in the proof of the main theorem.


\subsection{Local vanishing lemma}
\begin{proposition}
\label{homologylemma}
Let $W$ be a connected open submanifold of $\mathbb R^{n-1}$. 
Suppose that a nilpotent group $N$ of rank $r$ acts on $\mathbb R^{n-1}$ by covering translations, preserves $W$ and acts homotopically trivially on $W$. Then 
\begin{equation}
\overline H_{\geq n-1-r}(W)=0.
\end{equation} 
Moreover, if $r=n-1$ then $W$ is all of $\mathbb R^{n-1}$. 
\end{proposition}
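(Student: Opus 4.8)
The plan is to realize the quotient $W/N$ by a Borel construction and push the homotopical triviality hypothesis through the resulting Leray--Serre spectral sequence. Since $N$ acts freely and properly discontinuously on $\mathbb R^{n-1}$, the quotient $\mathbb R^{n-1}/N$ is a $K(N,1)$ that happens to be an $(n-1)$-manifold, so $\cd N\leq n-1$; in particular $N$ is torsion free, and, $N$ being finitely generated (as it is in all our applications), Malcev's theorem embeds $N$ as a cocompact lattice in a simply connected nilpotent Lie group $G$ with $\dim G=r$. Thus $BN\simeq G/N$ is a closed nilmanifold of dimension $r=\cd N\leq n-1$, so $n-1-r\geq 0$; moreover nilmanifolds are parallelizable, hence orientable, so $H_r(BN;\mathbb Z)\cong\mathbb Z$ and, for any abelian group $A$ with trivial $N$-action, Poincare duality gives $H_r(BN;A)\cong H^0(BN;A)\cong A$. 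Because $N$ acts freely on $W$, the Borel fibration $W\hookrightarrow EN\times_N W\to BN$ has total space homotopy equivalent to $W/N$, and its monodromy along the loop representing $g\in N$ is the self-homeomorphism $g\colon W\to W$, which is homotopic to $\id$ by hypothesis; hence the spectral sequence has untwisted coefficients,
$$E^2_{p,q}=H_p(BN;H_q(W))\ \implies\ H_{p+q}(W/N).$$

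I first claim $H_q(W)=0$ for every $q\geq\max(1,n-1-r)$. If not, let $q_0$ be the largest such degree with $H_{q_0}(W)\neq 0$. As $\mathbb R^{n-1}$ is acyclic and $q_0\geq 1$, this forces $W\neq\mathbb R^{n-1}$; since $W$ is $N$-invariant, $W/N$ is then a proper open subset of the connected manifold $\mathbb R^{n-1}/N$, hence a non-compact connected $(n-1)$-manifold, so $H_{\geq n-1}(W/N)=0$. On the other hand $E^2_{r,q_0}=H_r(BN;H_{q_0}(W))\cong H_{q_0}(W)\neq 0$, and this entry survives to $E^\infty$: no differential can enter it because $H_p(BN)=0$ for $p>r$, and none can leave it because the targets $E^k_{r-k,\,q_0+k-1}$ vanish by maximality of $q_0$. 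Thus $E^\infty_{r,q_0}\neq 0$ is a subquotient of $H_{r+q_0}(W/N)$, which is $0$ since $r+q_0\geq n-1$ --- a contradiction. Combining $H_{\geq\max(1,n-1-r)}(W)=0$ with the connectedness of $W$ (which kills $\overline H_0$) gives $\overline H_{\geq n-1-r}(W)=0$.

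For the last assertion, suppose $r=n-1$; then the claim just proved says $W$ is acyclic. Since $H_{n-1}(N;\mathbb Z)=H_{n-1}(G/N;\mathbb Z)=\mathbb Z\neq 0$ whereas a non-compact connected $(n-1)$-manifold has vanishing top homology, the $K(N,1)$-manifold $\mathbb R^{n-1}/N$ must be closed. If $W\neq\mathbb R^{n-1}$ then, as before, $W/N$ is a proper open --- hence non-compact --- subset of this closed manifold, so $H_{n-1}(W/N)=0$; but with $W$ acyclic the spectral sequence collapses onto its base row, so $H_*(W/N)\cong H_*(BN)\cong H_*(\mathbb R^{n-1}/N)$ and in particular $H_{n-1}(W/N)\cong\mathbb Z\neq 0$. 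This contradiction forces $W=\mathbb R^{n-1}$.

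The place where something real must be invoked is the orientability of the closed manifold $BN$: it is precisely what makes $H_r(BN;H_{q_0}(W))$ detect all of $H_{q_0}(W)$ rather than merely its $2$-torsion, and it relies on $N$ being (finitely generated and) torsion-free nilpotent --- so that $BN$ is a parallelizable nilmanifold --- and not just an arbitrary Poincare duality group. Everything else I expect to be routine: bookkeeping with the spectral sequence, together with the elementary facts that a contractible space is acyclic and that a proper open subset of a connected manifold is non-compact.
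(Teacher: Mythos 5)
Your argument is correct and is essentially the paper's proof: since $\mathbb R^{n-1}$ is a contractible free $N$-space it is a model for $EN$, so your Borel fibration $W\hookrightarrow EN\times_N W\to BN$ is exactly the paper's bundle $W\to(W\times\mathbb R^{n-1})/N\to\mathbb R^{n-1}/N$, and both proofs extract the contradiction from the surviving $E^2_{r,k}$ term together with the fact that a proper open subset of $\mathbb R^{n-1}/N$ has no top homology. Your observation that nilmanifolds are parallelizable is a slightly cleaner way to get orientability than the paper's device of passing to an index-two subgroup, but this is a cosmetic difference.
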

\begin{proof}
The spectral sequence corresponding to the bundle 
\begin{equation}
W\ra (W\times\mathbb R^{n-1})/N\ra\mathbb R^{n-1}/N
\end{equation}
is
\begin{equation}
E^2_{i,j}=H_i(N;H_j(W))\implies H_{i+j}((W\times\mathbb R^{n-1})/N)=H_{i+j}(W/N),
\end{equation}
The $N$-action on $H_j(W)$ is trivial because the $N$-action on $W$ is homotopically trivial. Let $k$ be the largest integer for which $H_k(W)\neq 0$. Since $N$ acts by covering translations on $\mathbb R^{n-1}$, it is torsionfree. Since $N$ is a nilpotent group of rank $r$, it is the fundamental group of a closed aspherical $r$-manifold, and passing to an index two subgroup of $N$ if necessary, we may assume this manifold is orientable. Consequently, there is a nontrivial fundamental class in $H_r(N)$, and hence also in $H_r(N;H_k(W))$. Thus, the $E^2$-term $E^2_{r,k}=H_r(N;H_k(W))$ is nonzero. Moreover, this term never gets killed in the spectral sequence and hence $H_{r+k}(W/N)\neq 0$. Since $W$ is an open submanifold of $\mathbb R^{n-1}$ we must have one of the following two possibilities. 
\begin{itemize}
\item
$r+k=n-1$. In this case $W/N$ is a closed $(n-1)$-manifold and consequently $W$ is also a closed $(n-1)$-manifold. So in fact $W=\mathbb R^{n-1}$. Consequently $k=0$ and $r=n-1$. 
\item $r+k<n-1$, which implies that $H_{\geq n-1-r}(W)=0$. 
\end{itemize}
In both cases, we have $\overline H_{\geq n-1-r}(W)=0$, which is what we needed to prove.
\end{proof}

\begin{remark} Here is the simplest instance of this proposition. Suppose $W$ is an open subset of the plane $\mathbb R^2$ and $\rho_t:\mathbb R^2\times[0,1]\ra\mathbb R^2$ is a homotopy preserving $W$(so $\rho_t(W)\subset W$) from the identity $\rho_0=id$ to a covering translation $\rho_1$. If a loop $\alpha\subset W$ is not contractible in $W$ then there is a point $x$ inside $\alpha$ that is not in $W$. Since $\rho_1$ is a covering translation, for sufficiently large $K$ the translated loop $\rho_1^K\alpha$ doesn't contain $x$, so the homotopy $\rho^K_t(\alpha)$ moves the loop $\alpha$ off the point $x$ without ever passing through the point $x$. This cannot happen, so $\alpha$ must be contractible in $W$. 
\end{remark}


\section{\label{clumps}Clumps}
Suppose that we have a finite collection of subsets $\{X_i\}$ of $X$ and corresponding subgroups $\{G_i\}$ of some group $G$. For each simplex $\sigma$ in the nerve of the cover $\{X_i\}$ let $X_{\sigma}=\cap_{i\in\sigma}X_i$ and $G_{\sigma}=\left<G_i\mid i\in\sigma\right>$. Any union of some $X_{\sigma}$ is called a {\it clump}. We will only care about those clumps that can be defined by some group $N$ via $Y_N=\cup_{N<G_{\sigma}}X_{\sigma}$. The intersection of two such clumps is given by the {\it intersection formula} 
$$
Y_N\cap Y_M=Y_{\left<N,M\right>}.
$$
\begin{proof}
If a point is in $\cup_{\left<N,M\right><G_{\sigma}}X_{\sigma}$, then it is clearly in both $Y_N$ and $Y_M$. 
In the other direction, a point $x$ in the intersection $Y_N\cap Y_M$ is contained in some intersection $X_{\sigma}\cap X_{\tau}$ for $G_{\sigma}>N,G_{\tau}>M$. The intersection $X_{\sigma}\cap X_{\tau}$ is nonempty, so it corresponds to a bigger simplex $\rho$ in the nerve containing both $\sigma$ and $\tau$. Thus, the point $x$ is in $X_{\rho}$ with $\left<N,M\right><\left<G_{\sigma},G_{\tau}\right><G_{\rho}$. This shows $x$ is contained in $\cup_{\left<N,M\right><G_{\sigma}}X_{\sigma}$, which is what we needed to prove.
\end{proof}
There may be more than one group that defines the same clump $Y$ but, by the intersection formula, there is always a largest such group, namely $\left<N\mid Y_N=Y\right>$. 

The group $N$ is called {\it minimal} (with respect to the collection $\{G_{\sigma}\}$) if for every $\sigma$ either
\begin{itemize}
\item
$N<G_{\sigma}$, or 
\item
$N\cap G_{\sigma}$ is an infinite index subgroup of $N$. 
\end{itemize}
\begin{lemma}
If $N$ and $M$ are minimal, then $\left<N,M\right>$ is minimal.
\end{lemma}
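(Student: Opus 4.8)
The statement to prove is: if $N$ and $M$ are minimal with respect to the collection $\{G_\sigma\}$, then $\langle N,M\rangle$ is minimal. I would argue directly from the definition of minimality, combined with the intersection formula for clumps. The key structural fact I expect to use is that the clump $Y_{\langle N,M\rangle}=Y_N\cap Y_M$, so that $\langle N,M\rangle$ being minimal should be detectable by testing simplices $\sigma$ against the way $Y_N$ and $Y_M$ separately sit inside the family $\{X_\sigma\}$.

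Concretely, fix a simplex $\sigma$ in the nerve; I must show that either $\langle N,M\rangle<G_\sigma$ or $\langle N,M\rangle\cap G_\sigma$ has infinite index in $\langle N,M\rangle$. The first step is to apply minimality of $N$ and of $M$ to $\sigma$, producing four cases. If $N<G_\sigma$ and $M<G_\sigma$ then $\langle N,M\rangle<G_\sigma$ and we are in the first alternative, so we are done. In the remaining cases at least one of $N\cap G_\sigma$, $M\cap G_\sigma$ has infinite index in the respective group; say $N\cap G_\sigma$ has infinite index in $N$. The second step — and I expect this to be the crux — is to deduce from "$N\cap G_\sigma$ has infinite index in $N$" that "$\langle N,M\rangle\cap G_\sigma$ has infinite index in $\langle N,M\rangle$". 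The natural tool is the observation that $\langle N,M\rangle\cap G_\sigma$ contains $N\cap G_\sigma$, and the projection/restriction $\langle N,M\rangle\cap G_\sigma\hookrightarrow\langle N,M\rangle$; one wants that a coset decomposition of $N$ over $N\cap G_\sigma$ gives infinitely many cosets of $\langle N,M\rangle\cap G_\sigma$ in $\langle N,M\rangle$. This uses that distinct cosets $n(N\cap G_\sigma)$ in $N$ remain in distinct cosets of $\langle N,M\rangle\cap G_\sigma$ in $\langle N,M\rangle$: if $n_1^{-1}n_2\in\langle N,M\rangle\cap G_\sigma$ for $n_1,n_2\in N$, then since $n_1^{-1}n_2\in N$ also, it lies in $N\cap G_\sigma$. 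So the index is indeed infinite, and this case is settled.

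The main obstacle, as I see it, is making sure the cases where $N<G_\sigma$ but $M\cap G_\sigma$ has infinite index (and symmetrically) are handled by the same mechanism: there one uses that $\langle N,M\rangle\cap G_\sigma\supset M\cap G_\sigma$ and runs the identical coset-separation argument with $M$ in place of $N$, again using that an element of $M$ lying in $\langle N,M\rangle\cap G_\sigma$ automatically lies in $M\cap G_\sigma$. Thus in every case except "$N,M$ both contained in $G_\sigma$" we get infinite index, and in that remaining case we get containment. Since $\sigma$ was arbitrary, $\langle N,M\rangle$ is minimal. The whole argument is essentially elementary group theory once one notices the coset-separation trick; no geometry or topology of the $X_i$ is needed beyond the intersection formula (which here is used only implicitly, via the definition of clumps), so I would keep the write-up short and purely combinatorial.
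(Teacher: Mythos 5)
Your proposal is correct and is essentially the paper's argument stated contrapositively: the paper assumes $G_\sigma\cap\langle N,M\rangle$ has finite index in $\langle N,M\rangle$ and deduces $G_\sigma\cap N$ has finite index in $N$ (which is the standard index inequality your coset-separation observation proves), then invokes minimality of $N$ and $M$ to conclude $N,M<G_\sigma$. The only difference is that you spell out the elementary group-theoretic step that the paper leaves implicit; the logic is the same.
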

\begin{proof}
If $G_{\sigma}\cap\left<N,M\right>$ is a finite index subgroup of $\left<N,M\right>$ then $G_{\sigma}\cap N$ is a finite index subgroup of $N$ so, since $N$ is minimal, $N<G_{\sigma}$. Similarly $M<G_{\sigma}$. Consequently $\left<N,M\right><G_{\sigma}$, proving that $\left<N,M\right>$ is minimal. 
\end{proof}
A clump $Y$ is called {\it maximal} if it can be defined by an infinite minimal group $N$. There may be more than one minimal group defining the maximal clump $Y$ but, by the above lemma, there is always a largest such minimal group, namely $\left<N \mbox{ minimal }\mid Y_N=Y\right>$. This group is called {\it the minimal group corresponding to the maximal clump $Y$}. 

Given the data $\{(X_i,G_i)\}$, let $\{Y_{\alpha}\}$ be the collection of maximal clumps whose corresponding minimal groups $\{N_{\alpha}\}$ have the additional property\footnote{This connects properties of the $\{G_i\}$ to those of the $\{N_{\alpha}\}$ e.g. presence of parabolic elements or lower bounds on rank.}
\begin{itemize}
\item 
$N_{\alpha}$ virtually contains $G_i$ for some infinite group $G_i$. 
\end{itemize}
Then 
\begin{itemize}
\item
$\{Y_{\alpha}\}$ is closed under intersections,
\item
Every set $X_i$ with infinite group $G_i$ is contained in some maximal clump $Y_{\alpha}$\footnote{Defined by the minimal finite index subgroup of $G_i$, i.e. the intersection of all $G_{\sigma}$ that virtually contain $G_i$. Note that this group virtually contains $G_i$.}, so
$$
\bigcup_iX_i=\bigcup_{\alpha}Y_{\alpha}\cup \bigcup_{G_i \mbox{ finite}}X_i.
$$
\item 
if $Y_{\alpha}\supset Y_{\beta}$ then $N_{\alpha}<N_{\beta}$, and
\item
if $Y_{\alpha}\supsetneq Y_{\beta}$ then $N_{\alpha}$ is an infinite index subgroup of $N_{\beta}$.  
\end{itemize}
In our application the groups $\{N_{\alpha}\}$ will be nilpotent, so the last bullet leads to the following {\it growing ranks property} of maximal clumps.  
\begin{corollary}[Growing ranks]
\label{growingrank}
If all the $\{N_{\alpha}\}$ are nilpotent and $Y_{\alpha_0}\supset\dots\supset Y_{\alpha_k}$
is a strictly decreasing sequence of maximal clumps then $\mbox{rank}(N_{\alpha_k})\geq k+\mbox{rank}(N_{\alpha_0})$.
\end{corollary}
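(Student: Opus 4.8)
The plan is to reduce the Corollary to a single fact about nilpotent groups and then telescope. First I would feed the strictly decreasing chain $Y_{\alpha_0}\supsetneq Y_{\alpha_1}\supsetneq\dots\supsetneq Y_{\alpha_k}$ of maximal clumps into the two bullet points preceding the statement: the third bullet turns it into an ascending chain of subgroups $N_{\alpha_0}<N_{\alpha_1}<\dots<N_{\alpha_k}$, and the fourth bullet, applied to each consecutive pair $Y_{\alpha_i}\supsetneq Y_{\alpha_{i+1}}$, says that $N_{\alpha_i}$ has infinite index in $N_{\alpha_{i+1}}$. So it suffices to prove that an infinite-index subgroup of a finitely generated nilpotent group has rank at least one smaller than the ambient group; granting that, $\mathrm{rank}(N_{\alpha_0})\le\mathrm{rank}(N_{\alpha_1})-1\le\dots\le\mathrm{rank}(N_{\alpha_k})-k$, which is exactly the asserted inequality. (Finite generation of the $N_\alpha$ is automatic in the applications, since each is a subgroup of one of the finitely generated groups $G_\sigma$; one may also replace each $N_\alpha$ by a torsion-free finite-index subgroup without changing any rank.)

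To prove that $\mathrm{rank}(H)\le\mathrm{rank}(N)-1$ for an infinite-index subgroup $H$ of a finitely generated nilpotent group $N$, I would use that for such groups the rank defined via $\bigoplus N_i/N_{i+1}$ equals the Hirsch length, which is additive along short exact sequences of polycyclic groups, and induct on the nilpotency class of $N$. When $N$ is abelian this is elementary: modding out the finite torsion subgroup, an infinite-index subgroup of $\mathbb Z^{r}$ spans a proper rational subspace and so has rank $\le r-1$. For the inductive step one splits $H$ through the abelianization, $\mathrm{rank}(H)=\mathrm{rank}(H\cap[N,N])+\mathrm{rank}\bigl(H[N,N]/[N,N]\bigr)$, and distinguishes two cases. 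Either the image of $H$ in $N/[N,N]$ has infinite index, and the abelian case already drops its rank; or $H[N,N]$ has finite index in $N$, which forces $H\cap[N,N]$ to have infinite index in $[N,N]$, and since $[N,N]$ is finitely generated (finitely generated nilpotent groups being polycyclic) of strictly smaller nilpotency class, induction drops that rank. In either case the total rank of $H$ drops by at least one.

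I expect this rank-drop lemma to be the only real content; the reduction in the first paragraph is pure bookkeeping with the clump bullet points. It is also the step where nilpotence is indispensable — it is what makes ``rank'' behave like a dimension, strictly decreasing with every jump under passage to subgroups, whereas for a general finitely generated group there is no comparable monotone invariant (a free group of rank two already contains infinite-index subgroups isomorphic to itself). A slicker alternative for the lemma is to pass to the Malcev completion $N\hookrightarrow N_{\mathbb Q}$, a uniquely divisible nilpotent group of rank $\mathrm{rank}(N)$ in which the divisible closure of $H$ has rank $\mathrm{rank}(H)$ and is a proper subgroup exactly when $[N:H]=\infty$; I would include whichever of the two arguments fits the surrounding exposition better.
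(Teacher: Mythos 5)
Your argument is correct and follows the paper's proof exactly: both reduce to the strict rank drop for infinite-index nilpotent subgroups (via the two bullets preceding the corollary) and then telescope. The only difference is that the paper asserts the rank-drop lemma as standard, whereas you also supply a (correct) proof of it via Hirsch-length additivity.
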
  
\begin{proof}
An infinite index nilpotent subgroup has smaller rank, so the chain of maximal clumps in the statement of the corollary gives inequalities
$$
\mbox{rank}(N_{\alpha_0})<\dots<\mbox{rank}(N_{\alpha_k})
$$  
which imply $\mbox{rank}(N_{\alpha_k})\geq k+\mbox{rank}(N_{\alpha_0})$.
\end{proof}

\section{Proof of Theorem \ref{maintheorem2}}
\label{mainsection}
\subsection{\label{patches}$\varepsilon$-patches}
If $S_i$ is a finite set of isometries such that the $\varepsilon$-sublevel set 
$$
L_i:=\bigcap_{\gamma\in S_i}\left\{x\in\widetilde M\mid d(x,\gamma x)<\varepsilon\right\}
$$ 
is not empty, then we call $(S_i,L_i)$ an {\it $\varepsilon$-patch}. 
If $\varepsilon$ is less than the Margulis constant, then the group $\left<S_i\right>$ generated by the isometries is almost nilpotent and contains a nilpotent subgroup of index $\leq I_n$, so the group $\Gamma_i=\left<\gamma^{I_n!}\mid\gamma\in S_i\right>$ is actually nilpotent. In this case, the {\it rank} of the patch is the rank of the nilpotent group $\Gamma_i$. The patch is {\it parabolic} if $\Gamma_i$ contains parabolic elements. Otherwise, (if the entire group $\Gamma_i$ is semisimple) the patch is {\it semisimple}.
The $\varepsilon$-patch $(S_i,L_i)$ may not be invariant under the group $\Gamma_i$, but it is contained in a larger, $\Gamma_i$-invariant $\varepsilon'$-patch $(Z_i,L^z_i)$, where $\varepsilon'=I_n!3^n\varepsilon$. This larger patch is defined in the following way. Let $S'_i=\{\gamma^{I_n!}\mid\gamma\in S_i\}$ be the generating set for $\Gamma_i$ and
$$
Z_i:=\left\{1\not=\gamma\in Z(\Gamma_i)\mid ||\gamma||_{S'_i}\leq 3^n\right\}
$$
the set of non-trivial isometries commuting with $\Gamma_i$ and of length $\leq 3^n$ in this generating set $S'_i$. The set $Z_i$ is non-empty by Lemma \ref{center}. The corresponding $\varepsilon'$-sublevel set 
$$
L^z_i:=\bigcap_{\gamma\in Z_i}\left\{y\in\widetilde M\mid d(y,\gamma y)<\varepsilon'\right\}
$$
contains $L_i$ by Lemma \ref{product}. 
Finally, the resulting $\varepsilon'$-patch $(Z_i,L^z_i)$ is $\Gamma_i$-invariant because $Z_i$ commutes with $\Gamma_i$.

\subsection{\label{end}The end}
Recall that $M$ is the interior of a compact manifold with boundary $\partial M$. So, $M$ has a neighborhood of infinity of the form $\partial M\times[0,\infty)$. 
This neighborhood lifts to $\partial\widetilde M\times[0,\infty)$ in the universal cover $\widetilde M$. 
Let $E$ be a single component of this, and $E_t$ the corresponding component of $\partial\widetilde M\times[t,\infty)$. The fundamental group $\Gamma:=\pi_1M$ permutes the components of $\partial\widetilde M\times[0,\infty)$, so if an element of the fundamental group $\gamma\in\Gamma$ does not move $E$ completely off itself, ($\gamma E\cap E\not=\emptyset$) then in preserves $E$ and its boundary $\partial E$ (so that $\gamma E=E$ and $\gamma\partial E=\partial E$). For convenience, we reparametrize so that $d(\partial E_0,E_t)\geq t$, so that the $R$-neighborhood $N_R(\partial E_0)$ of $\partial E_0$ does not meet $E_R$.

\subsection{The thin part}
The {\it $(\pi_1M,\varepsilon)$-thin} part of $\widetilde M$ is the subset 
$$
\widetilde M_{<\varepsilon}:=\{x\in\widetilde M\mid d(x,\gamma x)<\varepsilon \mbox{ for some } \gamma\in\pi_1M\setminus\{1\}\}
$$ 
of points that are moved less than $\varepsilon$ by some nonidentity element of the fundamental group $\Gamma:=\pi_1M$. We will usually just call this the $\varepsilon$-thin part of $\widetilde M$. The $\varepsilon$-thin part projects to the subset of $M$ on which the injectivity radius is $<\varepsilon/2$. 
Pick a constant $\mu$ for which the subset of injectivity radius $<\mu/2$ is contained in the neighborhood of the end $\partial M\times[0,\infty)$. On the universal cover $\widetilde M$, this means the $\mu$-thin part $\widetilde M_{<\mu}$ is a subset of $\partial\widetilde M\times[0,\infty)$. Further, we pick $\mu<\lambda_n$ to be less than the Margulis constant $\lambda_n$. 

\subsection{\label{rankn-1}The rank $n-1$ lemma} If one of the groups $\Gamma_i$
corresponding to an $\varepsilon$-patch (with small enough $\varepsilon$) has rank $n-1$ then the situation is much simpler that the general case. Projecting to an appropriate horosphere or minset, one gets a cocompact $\Gamma_i$-action and uses this to show the entire component of the end $E$ containing $L_i$ is contractible. There is no need to assemble any patches together.

More precisely, let $\varepsilon'=I_n!3^n\varepsilon$.
If $\varepsilon'<\mu$ then both the $\varepsilon$-patch $L_i$ and also the larger $\Gamma_i$-invariant $\varepsilon'$-patch $L^z_i$ are contained in some component $E$ of $\partial \widetilde M\times[0,\infty)$.  
\begin{lemma}[Rank $n-1$ lemma]
\label{rankn-1lemma}
If the $\varepsilon$-patch $L_i$ has rank $n-1$ then $E$ is contractible. 
\end{lemma}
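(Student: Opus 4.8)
The plan is to exploit the fact that rank $n-1$ is the largest rank possible for $\Gamma_i$ and that this forces a \emph{cocompact} picture. In the parabolic case one projects $\widetilde M$ onto a horosphere, in the semisimple case onto a minset; in either case one gets an $(n-1)$-dimensional target on which $\Gamma_i$ acts by covering translations, and the last clause of Proposition \ref{homologylemma} then forces the image of the $\Gamma_i$-invariant contractible enlarged patch $L^z_i$ to be the whole target. From this one reads off that $L^z_i$ contains an entire horoball, and then, using tameness to describe the end component $E$, that $E$ deformation retracts onto the contractible set $L^z_i$.

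Concretely, in the parabolic case (with $\varepsilon$ small enough that $\varepsilon'=I_n!3^n\varepsilon<\mu<\lambda_n$), Corollary \ref{paraboliccenter} gives a central parabolic element $\rho\in Z_i$ with $\inf d_\rho=0$, and Lemma \ref{fixedpoint} gives a point at infinity $\xi$ fixed by everything commuting with $\rho$. Since $\rho$ is central in $\Gamma_i$, the whole group $\Gamma_i$ fixes $\xi$ and preserves every horosphere centered at $\xi$, so the Busemann function $b_\xi$ is $\Gamma_i$-invariant and the geodesic projection $p_\xi\colon\widetilde M\to H:=H(x_0,\xi)\cong\mathbb R^{n-1}$ is $\Gamma_i$-equivariant; it is also an open map, being a projection in the topological product $\widetilde M=\mathbb R\times H$. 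The first technical point is that the induced $\Gamma_i$-action on $H$ is by covering translations. It is free: an element fixing a point of $H$ preserves the fibre geodesic $\ell$ over it, and since it also preserves every horosphere at $\xi$ it fixes each point $\ell\cap H(r(t),\xi)$, hence all of $\ell$, hence equals $1$ because $\Gamma_i<\pi_1M$ acts freely. It is properly discontinuous: a compact $K\subset H$ lifts homeomorphically to the $\Gamma_i$-invariant horosphere $H(x_0,\xi)$, and since each $\gamma\in\Gamma_i$ preserves $H(x_0,\xi)$, a failure of proper discontinuity on $H$ would move a fixed compact subset of $\widetilde M$ into itself, contradicting proper discontinuity of the $\pi_1M$-action.

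Next I would apply Proposition \ref{homologylemma} with $W:=p_\xi(L^z_i)$. This $W$ is a connected open submanifold of $\mathbb R^{n-1}$, is $\Gamma_i$-invariant, and is contractible: $L^z_i$ is convex and, being a sublevel set of the elements of $Z_i$ which all fix $\xi$, is a union of geodesic rays pointing at $\xi$, so $p_\xi$ restricts to a homotopy equivalence $L^z_i\to W$; hence $W$ is contractible and $\Gamma_i$ acts homotopically trivially on it. Since $\Gamma_i$ is nilpotent of rank $r=n-1$, Proposition \ref{homologylemma} forces $W=\mathbb R^{n-1}$, i.e.\ $p_\xi(L^z_i)=H$, and its proof moreover shows $\Gamma_i$ acts cocompactly on $H$. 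Now $L^z_i$ is open, $\Gamma_i$-invariant, closed under the geodesic flow towards $\xi$, meets every fibre of $p_\xi$, and sits over the cocompact $H$; taking a compact $K$ with $\Gamma_iK=H$ and using openness to get a uniform entry level over $K$, one obtains a horoball $HB=\{b_\xi<c\}\subset L^z_i$. Since $L^z_i$ is connected and lies in $\widetilde M_{<\mu}\subset\partial\widetilde M\times[0,\infty)$ while meeting $L_i\subset E$, we have $L^z_i\subset E$, and therefore $HB\subset E$.

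To finish, one wants $E_R\subset HB$ for $R$ large; granting this, $E_R\subset HB\subset L^z_i\subset E$, and the collar inclusion $E_R\hookrightarrow E$ is a homotopy equivalence that factors through $L^z_i$, so $E_R$ is a homotopy retract of the contractible set $L^z_i$ and hence $E\simeq E_R$ is contractible. The semisimple case runs the same way: by Lemma \ref{semisimple}, $\Gamma_i$ is then free abelian of rank $n-1$ with $Min(\Gamma_i)=C'\times\mathbb R^{r'}$, $r'\geq n-1$, acting by translations on the Euclidean factor; one replaces $p_\xi$ by the closest point projection onto $Min(\Gamma_i)$ and uses the splitting of Lemma \ref{semisimplesplittinglemma} to describe its restriction to $L^z_i$, concluding either that the Euclidean part fills $\widetilde M$ (so $M$ is closed flat, $\partial M=\emptyset$, and Theorem \ref{maintheorem2} is vacuous) or that $E$ again retracts onto the convex set $L^z_i$.

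The hard part will be exactly the last geometric step: that the component $E$ of the thin part is cofinal with---indeed eventually contained in---the horoball $HB$, i.e.\ that the only direction to infinity of $E$ is towards $\xi$. This is where tameness and the finite-volume hypothesis are genuinely used (via the Margulis lemma, which confines the thin part near $\xi$ to a bounded neighborhood of a horoball, and via tameness, which even gives $E$ the product form $\partial E\times[0,\infty)$), in contrast to the purely local ``visibility from $\xi$'' used in steps one through three, which are comparatively soft.
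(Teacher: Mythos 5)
Your setup is essentially the paper's: in the parabolic case you pick a central parabolic element, get $\xi$ via Lemma \ref{fixedpoint}, project $L^z_i$ onto a horosphere, and use the $r=n-1$ clause of Proposition \ref{homologylemma} to force the contractible $\Gamma_i$-invariant image to be all of $\mathbb R^{n-1}$; in the semisimple case you project to $Min(\Gamma_i)$ and use Lemma \ref{semisimple}. All of this matches the paper, and the added details about freeness and proper discontinuity of the projected action, while not spelled out in the paper, are fine.

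The genuine gap is the final geometric step, and you say so yourself: ``one wants $E_R\subset HB$ for $R$ large; granting this, \ldots'' and then at the end, ``the hard part will be exactly the last geometric step.'' You gesture at ``the Margulis lemma confines the thin part near $\xi$ to a bounded neighborhood of a horoball'' but do not give an argument, and that gesture is not how the paper closes the gap. The paper's key observation is an equivariance-plus-cocompactness argument that never needs $E_R\subset HB$ (nor a full horoball inside $L^z_i$). It needs only a single $\Gamma_i$-invariant separating hypersurface $\mathbb R^{n-1}\subset L^z_i$ (a horosphere in the parabolic case, a flat $\mathbb R^{n-1}\subset Min(\Gamma_i)\cap L^z_i$ in the semisimple case). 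Since $L^z_i\subset E$ and $L^z_i$ is $\Gamma_i$-invariant, $\Gamma_i$ preserves $E$, hence preserves $\partial E_0$. The $\Gamma_i$-action on $\mathbb R^{n-1}$ is cocompact (rank $n-1$ forces this in both cases), so a compact fundamental domain $F$ sits inside some $R$-neighborhood $N_R(\partial E_0)$, and by $\Gamma_i$-invariance of both $\partial E_0$ and $\mathbb R^{n-1}$ the \emph{entire} hypersurface lies in $N_R(\partial E_0)$. With the parametrization $d(\partial E_0,E_t)\ge t$, the collar $E_R$ is disjoint from $N_R(\partial E_0)$, hence disjoint from $\mathbb R^{n-1}$, hence contained in the far contractible component $H_2$ of $\widetilde M\setminus\mathbb R^{n-1}$ lying inside $E_0$. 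Then $E_R\hookrightarrow H_2\hookrightarrow E_0=E$ shows $E$ is contractible. Note that this argument works uniformly in both cases, whereas your horoball-based phrasing only makes sense in the parabolic case, and your suggested alternative for the semisimple case (``the Euclidean part fills $\widetilde M$ so $M$ is closed flat'') is not the dichotomy that actually arises. You should replace the unproved ``$E_R\subset HB$'' step by the invariance-and-cocompactness argument above.
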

\begin{proof}
There are two cases to consider.

\noindent
{\bf Case 1: $L_i$ is parabolic.} We pick a nontrivial parabolic element $\gamma$ in the center of $\Gamma_i$ and let $\xi$ be a point at infinity as in Lemma \ref{fixedpoint}. Look at the geodesic projection $p_{\xi}:L^z_i\ra H(x,\xi)\cong\mathbb R^{n-1}$ to a horosphere at this point at infinity. The fibres of this projection are connected geodesics (either open rays or entire lines) pointing at $\xi$ so the image $p_\xi(L^z_i)\subset\mathbb R^{n-1}$ is a contractible and open set.
The group $\Gamma_i$ preserves the set $L^z_i$ and also the point at infinity $\xi$, so it preserves the image $p_{\xi}(L^z_i)$. In summary $p_{\xi}(L^z_i)\subset\mathbb R^{n-1}$ is an open contractible subset invariant under the rank $n-1$ group $\Gamma_i$, so it must be the entire $\mathbb R^{n-1}$. The fact that $\Gamma_i$ has rank $n-1$ also implies that its action on $H(x,\xi)\cong\mathbb R^{n-1}$ has compact fundamental domain $F$, which means that for sufficiently large $t$ the horosphere $H(r(t),\xi)$ is contained in $L^z_i$. 

\noindent
{\bf Case 2: $L_i$ is semisimple.} In this case all elements are semisimple, $\Gamma_i$ is a free abelian group of rank $n-1$ ($\Gamma\cong\mathbb Z^{n-1}$) and its minset splits as $C\times\mathbb R^{n-1}$ with $\Gamma_i$ acting trivially on $C$ and by covering translations on $\mathbb R^{n-1}$ (Lemma \ref{semisimple}). The image of the closest point projection to this minset $p_{Min(\Gamma_i)}(L^z_i)=L^z_i\cap Min(\Gamma_i)$ is a (non-empty) convex, $\mathbb Z^{n-1}$-invariant subspace, so it contains a flat $\mathbb R^{n-1}\subset L^z_i$.

In either case, we have an $\mathbb R^{n-1}\subset L^z_i\subset E_0$ (in the first case a horosphere, and in the second case a totally geodesic flat) that separates the universal cover into two contractible components $\widetilde M\setminus\mathbb R^{n-1}=H_1\coprod H_2$,  with the first component $H_1$ containing $\partial E_0$ and the second component $H_2$ contained entirely in $E_0$. 
We claim that    
\begin{itemize}
\item
for sufficiently large $R$, the translate of the end $E_R$ is contained in $H_2$.
\end{itemize}
Since the composition of the inclusions $E_R\subset H_2\subset E_0=E$ is a homotopy equivalence and $H_2$ is contractible, this will prove that $E$ is contractible.

Now, we prove the claim.
Recall that the $\Gamma_i$ action on $\mathbb R^{n-1}$ has a compact fundamental domain $F$ (because $\Gamma_i$ has rank $n-1$), so there is a constant $R>0$ such that the $R$-neighborhood $N_R(\partial E_0)$ of $\partial E_0$ in $E_0$ contains $F$. Since both $\partial E_0$ and the $\mathbb R^{n-1}$ are invariant under the group $\Gamma_i$ ($\partial E_0$ is invariant under $\Gamma_i$ because $\Gamma_i$ preserves $L^z_i$ and hence doesn't move $E_0$ completely off itself) we find that the entire $\mathbb R^{n-1}$ is contained in the $R$-neighborhood of $\partial E_0$. On the other hand, $E_R$ does not meet $N_R(\partial E_0)$, so it must be contained in $H_2$. 
\end{proof}

\subsection{Setup for the rest of the argument\label{smallepsilon}}
If there is an $\varepsilon$-patch of rank $n-1$ with $\varepsilon'<\mu$, then we have shown the component $E$ containing it is contractible, so we are done with the proofs of Theorems \ref{maintheorem2} and \ref{maintheorem3} for that component. So, for the rest of sections \ref{mainsection} and \ref{coversandcoefficients} we can and will assume that
\begin{itemize}
\item
all $\varepsilon$-patches with $\varepsilon'<\mu$ have rank $\leq n-2$.
\end{itemize}
Since $M$ has finite volume, its injectivity radius tends to zero. So, for any $\varepsilon>0$ we can find a translate of the end $\partial\widetilde M\times[t,\infty)$ that is entirely contained in the $\varepsilon$-thin part $\widetilde M_{<\varepsilon}$. Thus, 
\begin{itemize}
\item
for $c\in H_*(\partial\widetilde M)$ and $\varepsilon>0$, we can assume $c$ is covered by $\varepsilon$-patches of rank $\geq 1$. 
\end{itemize}
In the rest of the proof, we will take cycles in finite unions of $\varepsilon$-patches and fill them inside unions of larger patches. In order to be able to do this, the initial $\varepsilon$ needs to be taken sufficiently small. In fact, it turns out that
\begin{equation}
\varepsilon<{{\mu}\over (I_n!)^{2^{n-3}+2}3^n}
\end{equation}
will work. We begin by organizing unions of $\varepsilon$-patches into clumps. 

\subsection{\label{small}Small clumps of $\varepsilon$-patches}
The $\varepsilon$-patches are closed under intersections, in the sense that for any simplex $\sigma$ in the nerve of $\{L_i\}$ we get a $\varepsilon$-patch $(S_{\sigma},L_{\sigma})$ where $S_{\sigma}=\cup_{i\in\sigma}S_i$ and $L_{\sigma}=\cap_{i\in\sigma}L_i$. The corresponding nilpotent\footnote{if $\varepsilon<$ Margulis constant} group is $\Gamma_{\sigma}=\left<S'_{\sigma}\right>=\left<\Gamma_{i}\mid i\in\sigma\right>$. 
So, we can associate maximal clumps $\{Y_{\alpha}\}$ and minimal nilpotent groups $\{N_{\alpha}\}$ to the collection of sets $\{L_i\}$ and groups $\{\Gamma_i\}$ as described in section \ref{clumps}.

\subsection{\label{big}Big clumps}
The big clump corresponding to $N_{\alpha}$ is 
$$
Y^z_\alpha:=\bigcup_{N_{\alpha}<\Gamma_{\sigma}}L^z_{\sigma}.
$$
Note that if $Y_{\alpha}\subset Y_{\beta}$ then $N_{\alpha}>N_{\beta}$ so that $Y^z_{\alpha}\subset Y^z_{\beta}$. Moreover $Y^z_{\alpha}$ is $N_{\alpha}$-invariant, because all of the large patches $L^z_{\sigma}$ that make it up are $N_{\alpha}$-invariant. 
Also note that if the intersection of two small clumps is $Y_{\alpha}\cap Y_\beta=Y_{\gamma}$, then the intersection of big clumps $Y^z_{\alpha}\cap Y^z_{\beta}$ contains $Y^z_{\gamma}$,\footnote{Because $Y_{\alpha}\cap Y_{\beta}=Y_{\gamma}$ implies $\left<N_{\alpha},N_{\beta}\right> < N_{\gamma}$ and hence $Y^z_\alpha\cap Y^z_\beta\supset\cup_{\left<N_\alpha,N_\beta\right><\Gamma_{\sigma}}L^z_{\sigma}\supset Y^z_\gamma$.} but may, in general, be bigger than $Y^z_\gamma$. 

\begin{remark}
The purpose of the big clumps $Y^z_\alpha$ is to produce a cover by sets that are as large as possible, and each have a homotopically trivial action of a nilpotent group on them. Each one identifies a part that can be ``seen'' from a single point at infinity. This is made more precise in the local vanishing lemma below.
\end{remark}

\subsection{\label{parabolicclumps}Topology of big parabolic clumps}
\begin{lemma}[Local vanishing lemma]
\label{lvl}
Suppose $N$ is a nilpotent group of rank $r\leq n-2$ containing parabolic elements. If $N<\Gamma_i$ for all $i$ then 
$$
H_{\geq n-1-r}(\cup L_i^z)=0.
$$   
\end{lemma}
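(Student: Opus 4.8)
The plan is to reduce the statement to an application of the local vanishing Proposition \ref{homologylemma} by finding a single point at infinity from which the entire big clump $\cup L_i^z$ can be ``seen.'' Since $N$ contains parabolic elements and $N \cap Z(\Gamma_i)$ is nontrivial for each $i$ (indeed $N$ is central-ish: by construction each $L_i^z$ is built from central elements of $\Gamma_i$, and $N < \Gamma_i$), one should first extract a parabolic element $\gamma$ lying in $N$ that commutes with all the $\Gamma_i$ — this is where Corollary \ref{paraboliccenter} applied to $N$ (or to an appropriate central subgroup) comes in, producing a central parabolic $\rho \in N$. Then Lemma \ref{fixedpoint} gives a point at infinity $\xi$ fixed by every isometry commuting with $\rho$; in particular every $\Gamma_i$ (and hence $N$) preserves $\xi$ and every horosphere $H(x,\xi)$.

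Next I would set up the horospherical projection $p_\xi \colon \widetilde M \to H(x,\xi) \cong \mathbb R^{n-1}$. The key geometric point, already recorded in the subsection on sublevel sets of displacement functions, is that if a point $y$ lies in a sublevel set $\{d(z,\gamma z) < \varepsilon'\}$ of an isometry $\gamma$ fixing $\xi$, then the whole geodesic ray $\overline{y\xi}$ stays in that sublevel set (the geodesic projection toward $\xi$ is shrinking). Since each $L_i^z$ is a finite intersection of such sublevel sets for isometries in $Z_i$, all of which commute with $\Gamma_i$ hence fix $\xi$, each $L_i^z$ is a union of geodesic rays pointing at $\xi$; therefore $\widetilde M = \mathbb R \times H(x,\xi)$ restricts to $L_i^z = \mathbb R \times p_\xi(L_i^z)$ and likewise $\cup L_i^z = \mathbb R \times p_\xi(\cup L_i^z)$. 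In particular $\cup L_i^z$ is homotopy equivalent to its image $W := p_\xi(\cup L_i^z)$, an open subset of $\mathbb R^{n-1}$.

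Now I would show the $N$-action on $W$ is homotopically trivial, using the Mayer–Vietoris / nerve machinery of Section 2. The cover $\{L_i^z\}$ (equivalently, after projecting, $\{p_\xi(L_i^z)\}$) has the feature that each finite intersection $L^z_\sigma = \cap_{i \in \sigma} L_i^z$ is convex (intersection of convex sublevel sets), hence contractible; moreover each is invariant under $\Gamma_\sigma \supseteq N$ — here I would invoke that $N < \Gamma_i$ for all $i$ forces $N < \Gamma_\sigma$, and $N$ commutes appropriately with the defining central isometries so preserves each $L_i^z$, hence each intersection. By Corollary following subsection \ref{htriv}, since every finite intersection of the cover is empty or contractible and $N$ preserves each member of the cover, the $N$-action on $\cup L_i^z \simeq W$ is homotopically trivial. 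Finally, $W$ is a connected (after passing to components, which I should address — or note that $N$ permutes components and one restricts to an invariant one, or observe $\cup L_i^z$ is connected because it lies in a single end component $E$) open submanifold of $\mathbb R^{n-1}$ carrying a homotopically trivial action of the nilpotent rank-$r$ group $N$; Proposition \ref{homologylemma} then yields $\overline H_{\geq n-1-r}(W) = 0$, and since $r \geq 1$ we have $n-1-r \leq n-2 < n-1$ so $\overline H_{\geq n-1-r}$ and $H_{\geq n-1-r}$ agree in the relevant range, giving $H_{\geq n-1-r}(\cup L_i^z) = 0$.

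The main obstacle I anticipate is the bookkeeping around \emph{which} parabolic element to use and ensuring it commutes with \emph{all} the $\Gamma_i$ simultaneously — a single central parabolic of $N$ need not be central in any larger $\Gamma_\sigma$, but what is needed is only that it be fixed-point data on the sphere at infinity compatible with all the $L_i^z$, and here one uses that $L_i^z$ is already arranged (in subsection \ref{patches}) to be invariant under $\Gamma_i$ and defined by elements central in $\Gamma_i$; one then needs the chosen parabolic $\rho$ to commute with each such defining element. I would handle this by taking $\rho$ central in $N$ and noting that the defining central elements $Z_i$ of $L_i^z$, together with $N < \Gamma_i$, all live in the nilpotent group $\Gamma_i$ where $\rho$ is central — so $\rho$ commutes with all of $Z_i$, hence fixes $\xi$-data making each $L_i^z$ a union of rays toward the common $\xi$. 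The only subtlety is confirming $\rho$ can be chosen genuinely central in $\langle \Gamma_i : N < \Gamma_i \rangle$ rather than merely in $N$; if not, one restricts the cover to those $\sigma$ actually contributing and works with the smaller group, which suffices since the union is unchanged.
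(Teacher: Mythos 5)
Your proposal is correct and follows essentially the same route as the paper's proof: take a central parabolic $\rho\in Z(N)$ via Corollary \ref{paraboliccenter}, use Lemma \ref{fixedpoint} to get a point $\xi$ at infinity, observe that each $Z_i$ commutes with $\rho$ (because $Z_i\subset Z(\Gamma_i)$ and $\rho\in N<\Gamma_i$) so everything fixes $\xi$, project by $p_\xi$ to a horosphere, note $L_i^z$ are unions of rays toward $\xi$ so the projected cover has the same nerve with contractible pieces and intersections, deduce the $N$-action on $p_\xi(\cup L_i^z)$ is homotopically trivial, and apply Proposition \ref{homologylemma}. One small phrasing slip: you say the $Z_i$ and $\rho$ ``all live in the nilpotent group $\Gamma_i$ where $\rho$ is central,'' but $\rho$ need not be central in $\Gamma_i$; the commutation holds because the $Z_i$ are central in $\Gamma_i$, not $\rho$. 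The anticipated ``obstacle'' paragraph is therefore a non-issue once stated this way, and your connectivity worry is also easily dispatched since $N$ preserves each (connected, convex) $L_i^z$, hence preserves each connected component of the union, to which Proposition \ref{homologylemma} can be applied componentwise.
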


\begin{remark}
Note that Proposition \ref{homologylemma} implies $H_{\geq n-r}(\cup L_i^z)=0$. The point of the local vanishing lemma is that one can reduce the dimension in which the homology of a parabolic clump vanishes by one more by projecting onto an appropriate horosphere. 
\end{remark}

\begin{proof}
Since the group $N$ contains a nontrivial parabolic element, its center $Z(N)$ also contains a nontrivial parabolic element $\gamma$ (Corollary \ref{paraboliccenter}). Take a point $\xi$ on the sphere at infinity as in Lemma \ref{fixedpoint}. Then group $N$ and all the sets $Z_i$ commute with $\gamma$, so they fix $\xi$ and preserve all horospheres $H(x,\xi)$ centered at $\xi$.  
Let 
$$
p:\widetilde M\ra H(x,\xi)\cong\mathbb R^{n-1}
$$ 
be the geodesic projection onto a horosphere based at $\xi$. 
The fibres of $p$ on all the $L^z_i$ are connected geodesics (either open rays or entire lines) pointing at $\xi$. Note that
\begin{itemize}
\item The projections $\{p(L^z_i)\}$ form an open cover of $p(\cup L^z_i)$.
\item All intersections from the open cover $\{p(L^z_i)\}$ are either empty or contractible, because they are homotopy equivalent to the corresponding intersections from the cover $\{L^z_{i}\}$ via the projection map $p$.
\item The nerve of $\{p(L^z_i)\}$ is the same as the nerve of $\{L^z_{i}\}$.
\item So, $\cup L^z_i$ is homotopy equivalent to $p(\cup L^z_i)$.
\item The nilpotent group $N$ or rank $r$ acts on the horosphere $H(x,\xi)\cong\mathbb R^{n-1}$, preserves each of the individual elements of the cover $\{p(L^z_i)\}$ and consequently acts on $p(\cup L^z_i)$ in a homotopically trivial way. 
\end{itemize}
Now, Proposition \ref{homologylemma} implies $H_{\geq n-1-r}(p(\cup L^z_i))=0$ and consequently $H_{\geq n-1-r}(\cup L^z_i)=0$. 
\end{proof}
Applied to big clumps: if $N_{\alpha}$ has rank $r$ and contains parabolics then $H_{\geq n-1-r}(Y^z_{\alpha})=0$. 

\subsection{\label{parabolicunfolding}Assembling parabolic clumps via unfolding space $^uL$} 
We show the following claim.
\begin{itemize}
\item 
Let $\varepsilon'=3^n(I_n!)\varepsilon$. For any $\varepsilon'<\mu$, if a cycle $c_p$ of degree $\geq n-1-r$ can be covered by a union of parabolic $\varepsilon$-patches $L=\cup L_i$ of rank $\geq r$ then it bounds inside the union of large $\varepsilon'$-patches $L^z=\cup L^z_{\sigma}$. 
\end{itemize}
Denote by $\Delta L$ the fattening of $L$ obtained from the cover $\{Y_\alpha\}$ (see subsection \ref{Delta}).
Since the cover $\{Y_\alpha\}$ is closed under intersections, its reduced nerve $\overline{\mathcal Y}$ can be described as the simplicial complex whose $k$-simplices are strictly decreasing chains $Y_{\alpha_0}\supset\dots\supset Y_{\alpha_k}$.
We let
$$
^{u}L:=\coprod Y^z_{\alpha_k}\times\{Y_{\alpha_0}\supset\dots\supset Y_{\alpha_k}\}/\sim
$$ 
be the quotient of the disjoint union of all the possible products $Y^z_{\alpha_k}\times\{Y_{\alpha_0}\supset\dots\supset Y_{\alpha_k}\}$ by identifications coming from the inclusions $Y^z_{\alpha_k}\times\{Y_{\beta_0}\supset\dots\supset Y_{\beta_l}\}\hookrightarrow Y^z_{\beta_l}\times\{Y_{\beta_0}\supset\dots\supset Y_{\beta_l}\}$ corresponding to subsets $\{\beta_0,\dots,\beta_l\}\subset\{\alpha_0,\dots,\alpha_k\}$. 
\begin{figure}
\centering
\includegraphics[scale=0.20]{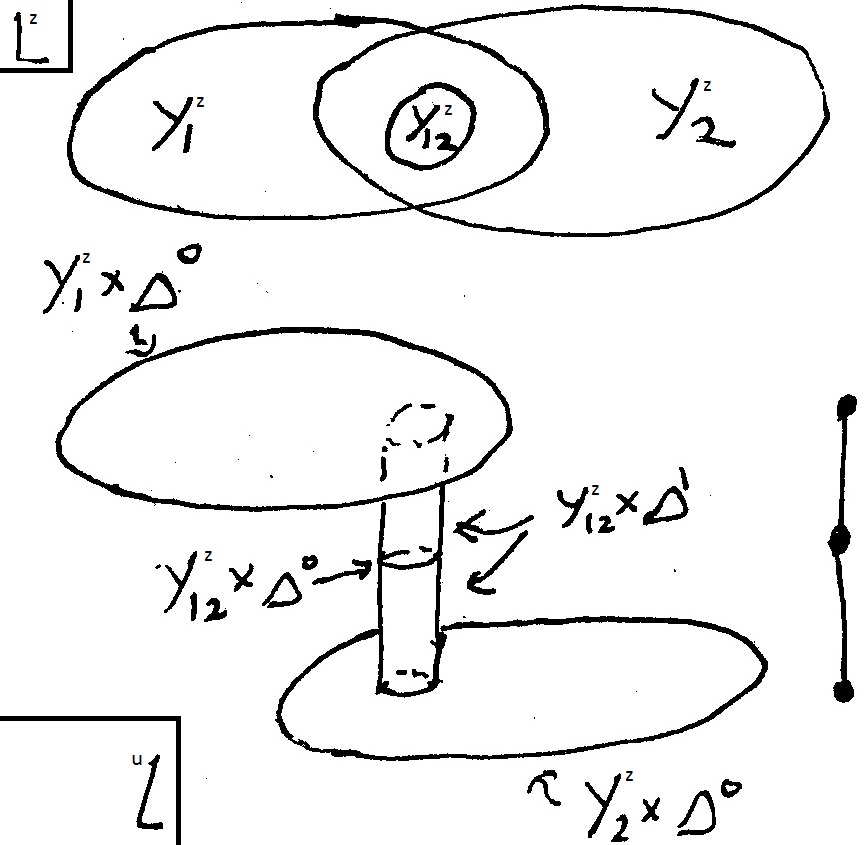}
\end{figure}

These spaces fit together into a commutative diagram
$$
\begin{array}{ccc}
&&\overline{\mathcal Y}\\
&&\uparrow\\
\Delta L&\ra&^{u}L\\
\downarrow&&\downarrow\\
L&\hookrightarrow&L^z.
\end{array}
$$ 
In this diagram
\begin{itemize}
\item
the left vertical arrow is a homotopy equivalence (by Proposition 4G.2 of \cite{hatcher}),
\item 
the top horizontal map is the composition of the map to the reduced thickening $\Delta L\ra\overline{\Delta}L$ with the inclusion $\overline{\Delta}L\hookrightarrow{^{u}}L$ obtained from the inclusions $Y_{\alpha}\hookrightarrow Y^z_{\alpha},$ and 
\item
the right vertical arrows are the obvious projections to $L^z$ and to the reduced nerve $\overline{\mathcal Y}$. 
\end{itemize} 
The point of this construction is that $^{u}L$ is a space where the elements of the cover $\{Y^z_\alpha\}$ are assembled according to the combinatorics of the reduced nerve $\overline{\mathcal Y}$ of the cover $\{Y_\alpha\}$. 
The diagram shows that to prove the claim it is enough to show that the homology of the space $^{u}L$ vanishes in dimensions $\geq  n-1-r$. We can compute the homology of ${^u}L$ using the map to the reduced nerve $^{u}L\ra\overline{\mathcal Y}$ via the corresponding spectral sequence

\begin{equation}
\label{mayervietoris}
\bigoplus_{Y_{\alpha_0}\supset\dots\supset Y_{\alpha_k}}H_j(Y^z_{\alpha_k}\times\{Y_{\alpha_0}\supset\dots\supset Y_{\alpha_k}\})
=\bigoplus_{Y_{\alpha_0}\supset\dots\supset Y_{\alpha_k}}H_j(Y^z_{\alpha_k})\implies H_{k+j}({^u}L),
\end{equation}
where the sum is over all the strictly decreasing chains $Y_{\alpha_0}\supset\dots\supset Y_{\alpha_k}$ representing simplices in the reduced nerve $\overline{\mathcal Y}$. To show that the term on the right is zero for $k+j\geq n-1-r$ we need to show that the terms on the left are all 
zero for $j\geq n-1-k-r$. In other words, we need to show that 
for any strictly decreasing chain $Y_{\alpha_0}\supset\dots\supset Y_{\alpha_k}$
\begin{equation}
H_{\geq n-1-(k+r)}(Y^z_{\alpha_k})=0.
\end{equation}
Since the patches $\{L_i\}$ are parabolic of rank $\geq r$ and each $N_{\alpha}$ virtually contains some $\Gamma_i$, all the $N_{\alpha}$ have rank $\geq r$ and contain parabolic elements. 
The growing ranks property (Corollary \ref{growingrank}) implies $N_{\alpha_k}$ has rank $\geq k+r$. Since it acts homotopically trivially on $Y^z_{\alpha_k}$ and contains parabolic elements, the claim follows from the local vanishing lemma (Lemma \ref{lvl}).

\begin{remark}
What we have done so far is sufficient to prove Theorem \ref{maintheorem2} in the case of bounded negative curvature ($-1<K<0$). In this case, for sufficiently small $\varepsilon$ all the $\varepsilon$-patches are parabolic. So, we pick $\varepsilon$ small enough, represent a homology class in $H_{n-2}(\partial\widetilde M)$ by a cycle $c\subset\partial\widetilde M\times[0,\infty)$ that is covered by parabolic $\varepsilon$-patches $\cup L_i$ and bound it inside the union of $\varepsilon'$-patches $\cup L^z_{\sigma}$ in the $\mu$-thin part $\widetilde M_{<\mu}\subset\partial\widetilde M\times[0,\infty)$, thus proving the theorem. \end{remark}

\begin{remark}
Notice that even if we are only interested in $H_{n-2}(\partial\widetilde M)$, we still need to deal with lower dimensional cycles and higher rank nilpotent groups in order to put things together via the Mayer-Vietoris assembly method. 
\end{remark}

\subsection{Reduction to the semisimple case}
\label{semisimplereduction1}
Suppose that all the $\Gamma_i$ have rank $\geq r$ and let 
\begin{eqnarray*}
S&=&\bigcup_{L_{\rho}\mbox{ semisimple}}L_{\rho},\\
P&=&\bigcup_{L_{\tau}\mbox{ parabolic}}L_\tau.
\end{eqnarray*}
Any patch $L_{\sigma}=L_{\rho}\cap L_{\tau}$ in the intersection $S\cap P$ is parabolic of rank $\geq r+1$,
so (by the previous subsection) any $\geq(n-1-r)$-dimensional cycle $c\subset S\cup P$ can be broken up as $c=c_s+c_p$ where $c_s\subset S\cup(S\cap P)^z$, $c_p\subset (S\cap P)^z\cup P$, and
$$
(S\cap P)^z:=\bigcup_{L_{\rho}\mbox{ semisimple, } L_{\tau}\mbox{ parabolic}}(L_\rho\cap L_{\tau})^z.
$$
\begin{lemma}
If $\Gamma_{\rho}$ is semisimple then $\Gamma_{\rho}=\left<Z_{\rho}\right>$ and for any $\sigma$ containing $\rho$ we have $Z_{\rho}<Z_{\sigma}$.
\end{lemma}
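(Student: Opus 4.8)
The plan is to feed the hypothesis into Lemma~\ref{semisimple} (the structure theorem for semisimple isometries inside a nilpotent group of covering translations), applied once to $\Gamma_\rho$ and once to each simplex $\sigma$ containing $\rho$. For the first assertion, note that $\Gamma_\rho$ is nilpotent and acts discretely and freely by covering translations, and that the hypothesis ``$\Gamma_\rho$ semisimple'' means \emph{every} element of $\Gamma_\rho$ is a semisimple isometry. Thus in Lemma~\ref{semisimple} the subgroup $A<\Gamma_\rho$ of semisimple elements equals all of $\Gamma_\rho$, so the lemma forces $\Gamma_\rho$ to be free abelian of rank $r$. In particular $Z(\Gamma_\rho)=\Gamma_\rho$, so every generator in $S'_\rho$ is a nontrivial central element of word length $1\leq 3^n$ in $S'_\rho$, i.e. $S'_\rho\subseteq Z_\rho$. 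Since also $Z_\rho\subseteq Z(\Gamma_\rho)=\Gamma_\rho$, we conclude $\Gamma_\rho=\langle S'_\rho\rangle\subseteq\langle Z_\rho\rangle\subseteq\Gamma_\rho$, hence $\Gamma_\rho=\langle Z_\rho\rangle$.

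For the second assertion, fix $\sigma\supseteq\rho$ and $\gamma\in Z_\rho$. Then $\gamma\in Z(\Gamma_\rho)\subseteq\Gamma_\rho\subseteq\Gamma_\sigma$, and $\gamma$ is a semisimple isometry since $\Gamma_\rho$ is a semisimple patch. Now apply Lemma~\ref{semisimple} to $\Gamma_\sigma$ (which is nilpotent because $L_\sigma\neq\emptyset$ and $\varepsilon$ lies below the Margulis constant): the semisimple isometries in $\Gamma_\sigma$ form a subgroup central in $\Gamma_\sigma$, and $\gamma$ belongs to it, so $\gamma\in Z(\Gamma_\sigma)$. Moreover $\gamma\neq 1$, and the containment of simplices $\rho\subseteq\sigma$ gives $S'_\rho\subseteq S'_\sigma$, so $||\gamma||_{S'_\sigma}\leq||\gamma||_{S'_\rho}\leq 3^n$. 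Hence $\gamma\in Z_\sigma$, which is the desired inclusion $Z_\rho\subseteq Z_\sigma$.

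The one step I expect to be the real content is the passage from centrality in $\Gamma_\rho$ to centrality in $\Gamma_\sigma$: an element central in a nilpotent group is in general no longer central in a larger nilpotent group, and it is exactly semisimplicity that rescues us, through the fact (Lemma~\ref{semisimple}) that the semisimple elements of a nilpotent group of covering translations automatically lie in its center. It is worth stressing in the write-up that being a semisimple isometry is an intrinsic property of $\gamma$ (it refers only to whether the displacement function $d_\gamma$ attains its infimum) and therefore does not depend on which of $\Gamma_\rho$, $\Gamma_\sigma$ we view $\gamma$ in; this is precisely what allows the semisimplicity hypothesis on $\Gamma_\rho$ to be transported to $\Gamma_\sigma$. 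The remaining ingredients --- the generation statement and the word-length bound --- are routine.
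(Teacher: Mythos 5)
Your proof is correct and follows essentially the same route as the paper: use Lemma~\ref{semisimple} to conclude $\Gamma_\rho$ is abelian (hence $S'_\rho\subseteq Z_\rho$ and $\Gamma_\rho=\langle Z_\rho\rangle$), then apply Lemma~\ref{semisimple} again to $\Gamma_\sigma$ to show that semisimple elements are central there, and finish with the word-length comparison coming from $S'_\rho\subseteq S'_\sigma$. Your closing remark about semisimplicity being intrinsic to the isometry $\gamma$ is exactly the point the paper is implicitly relying on.
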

\begin{proof}
If $\Gamma_{\rho}$ is semisimple then it is abelian, so its generating set $S'_{\rho}$ lies in $Z_{\rho}\subset\Gamma_{\rho}$ implying that $\Gamma_{\rho}=\left<Z_\rho\right>$. 
The set $Z_\rho$ of semisimple elements lies in the center of $\Gamma_\sigma$ (by Lemma \ref{semisimple}) and consists of elements that have length $\leq 3^n$ in the generating set $S'_{\rho}\subset S'_{\sigma}$, so $Z_{\rho}\subset Z_{\sigma}$. 
\end{proof}
The lemma implies
\begin{itemize}
\item
For any patch $L_{\sigma}=L_{\rho}\cap L_{\tau}$ in the intersection $S\cap P$, the abelian group $\left<Z_{\sigma}\right>$ contains the rank $\geq r$ semisimple group $\left<Z_\rho\right>=\Gamma_{\rho}$ and also contains parabolic elements (by Corollary $9$). Thus $(Z_{\sigma},L^z_{\sigma})$ is a parabolic $\varepsilon'$-patch of rank $\geq r+1$. Consequently, the entire region $(S\cap P)^z\cup P$ can be covered by parabolic $\varepsilon'$-patches of rank $\geq r$ so we can bound $c_p$ inside a union of large $\varepsilon''$-patches, as long as $\varepsilon''=3^n(I_n!)\varepsilon'$ is less than $\mu$.
\item(Semisimple engulfing)
If $\Gamma_{\rho}$ is semisimple and $\rho\subset\sigma$ then $L^z_{\rho}\supset L^z_{\sigma}$. Hence the union $S^z=\cup_{\Gamma_i \mbox{ semisimple}} L_i^z$ contains $(S\cap P)^z$. It also (clearly) contains $S$, so it contains the cycle $c_s$. 
\end{itemize}

\subsection{Reduction from semisimple patches to minsets}
\label{semisimplereduction2} 
The next step is to pass from the union of sublevel sets $S^z$ to a union of minsets. We change notation so that it does not involve the superscript $z$ anymore. 
In this new notation, the original $S^z$ is a union $\cup_iU_i^0$ of $\varepsilon'$-patches $(Z_i,U_i^0)$. The $Z_i$ generate abelian groups $A_i=\left<Z_i\right>$ of semisimple isometries and we define
\begin{eqnarray*}
Z^k_i&=&(I_n!)^kZ_i,\\
U_i^k&=&\bigcap_{\gamma\in Z^k_i}\{x\mid d(x,\gamma x)<\varepsilon' (I_n!)^k\},\\
A_i^k&=&\left<Z_i^k\right>,\\
M^k_i&=&Min\left(A_i^k\right)
\end{eqnarray*}
which are nested via
$$
\begin{array}{ccccc}
U_i^0&\subset& U_i^1&\subset&\dots,\\
\cup&&\cup&&\\
M^0_i&\subset& M_i^1&\subset&\dots.
\end{array}
$$ 
The key additional point is that if $U^0_i\cap U^0_j\not=\emptyset$ then $M^{1}_i\cap M^{1}_j\not=\emptyset$:
\begin{proof}
If $U_i^0$ and $U_j^0$ intersect at some point $x$ then the Margulis lemma implies\footnote{if $\varepsilon'<$ Margulis constant} that $\left<A_i^0,A_j^0\right>$ is almost nilpotent so $\left<A_i^1,A_j^1\right>$ is actually nilpotent. Since it is generated by semisimple elements, Lemma \ref{semisimple} implies it is free abelian and consists entirely of semisimple elements. Its minset $Min\left(\left<A_i^1,A_j^1\right>\right)$ is nonempty and is contained in both $M_i^1$ and $M_j^1$. 
\end{proof}

Consequently, we have natural inclusions of nerves 
$$
Nerve\{U^0_i\}\subset Nerve\{M^{1}_i\}\subset Nerve\{U^{1}_i\}.
$$ 
Since, all the minsets and sublevel sets are convex the nerves are homotopy equivalent to the unions of the elements of the covers. This implies that for any cycle $c\in\cup U_i^0$ there is a cycle $c_m\in\cup M_i^1$ so that $c$ is homologous to $c_m$ inside the union of $(I_n!)\varepsilon'$-patches $\cup U_i^{1}$. 

What we have shown so far can be summarized as follows.
\begin{itemize}
\item
Suppose $c$ is a degree $\geq (n-1-r)$-cycle covered by $\varepsilon$-patches of rank $\geq r$. If $\varepsilon''<\mu$, then $c$ is homologous inside the $\mu$-thin part $\widetilde M_{<\mu}$ to a cycle $c_m$ in a union of minsets $\cup M^1_i$ of rank $\geq r$. 
\end{itemize}
It remains to show that the cycle $c_m$ bounds in the $\mu$-thin part. We will show that it bounds inside a union of larger minsets as long as this later union is still contained in the $\mu$-thin part (see Proposition \ref{almostabelianarrangements}). 

\subsection{Arrangements of minsets of small semisimple isometries}
\label{anothersemisimpleassemblylemma}
For any union of intersections of minsets $M=\cup_{\sigma}\cap_{i\in\sigma}Min(A_i)$, let $M^k=\cup_{\sigma}\cap_{i\in\sigma}Min(A^k_i)$ and  
$
A^k_{\sigma}=\left<A^k_i\mid i\in\sigma\right>.
$

\begin{lemma}If $\cap_{i\in\sigma}Min(A_i^k)\not=\emptyset$ and $(I_n!)^{k}\varepsilon'<\lambda_n$ (the Margulis constant), then $A^{k+1}_{\sigma}$ is an abelian group of semisimple isometries preserving each $Min(A_i^{k+1})$ with $i\in\sigma$. 
\end{lemma}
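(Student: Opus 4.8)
The plan is to reuse, one level up, the mechanism from the proof just above that $\langle A_i^1,A_j^1\rangle$ is nilpotent when $U_i^0\cap U_j^0\neq\emptyset$: apply the Margulis lemma at level $k$ using a point of the common minset as basepoint, deduce that $A^{k+1}_\sigma$ lands inside an honest nilpotent subgroup, and then extract the abelian/semisimple structure and the invariance of the minsets from Lemma \ref{semisimple} and commutativity. So first I would fix a point $x\in\bigcap_{i\in\sigma}Min(A_i^k)$, which is nonempty by hypothesis, and use it as the basepoint for Lemma \ref{lll} applied to $A^k_\sigma$, the group generated by the isometries $\bigcup_{i\in\sigma}Z_i^k$.

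The first genuine step is a displacement estimate: each generator of $A^k_\sigma$ moves $x$ by less than the Margulis constant $\lambda_n$. Fix $i\in\sigma$ and $\delta\in Z_i^k$, say $\delta=\gamma^{(I_n!)^k}$ with $\gamma\in Z_i$. Because $(Z_i,U_i^0)$ is an $\varepsilon'$-patch the set $U_i^0$ is nonempty, so some point is moved less than $\varepsilon'$ by $\gamma$; hence $\inf d_\gamma<\varepsilon'$, and the triangle inequality (Lemma \ref{product}) gives $\inf d_\delta\leq(I_n!)^k\inf d_\gamma<(I_n!)^k\varepsilon'$. Now $\delta\in A_i^k$, so $x\in Min(A_i^k)\subset Min(\delta)$, whence $d(x,\delta x)=\inf d_\delta<(I_n!)^k\varepsilon'<\lambda_n$ by the hypothesis on $k$. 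Then Lemma \ref{lll} provides a normal nilpotent subgroup $N\leq A^k_\sigma$ of index $\leq I_n$. Since $Z_i^{k+1}=\{g^{I_n!}\mid g\in Z_i^k\}$, every generator of $A^{k+1}_\sigma$ is the $I_n!$-th power of some generator $g$ of $A^k_\sigma$; the order of $gN$ in $A^k_\sigma/N$ divides $[A^k_\sigma:N]\leq I_n$ and hence divides $I_n!$, so $g^{I_n!}\in N$. Therefore $A^{k+1}_\sigma\leq N$, and $A^{k+1}_\sigma$ is nilpotent.

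Next I would upgrade nilpotence to the full statement. The group $A^{k+1}_\sigma$ is generated by powers of elements of the $Z_i$, hence by semisimple isometries, and it acts freely and discretely since it consists of covering translations; so Lemma \ref{semisimple} applies and shows that the subset of semisimple elements of $A^{k+1}_\sigma$ is a central free abelian subgroup. As $A^{k+1}_\sigma$ is generated by semisimple elements it coincides with that subgroup, so it is free abelian and consists of semisimple isometries. Finally, for $i\in\sigma$ we have $A_i^{k+1}\leq A^{k+1}_\sigma$, and $A^{k+1}_\sigma$ being abelian, each $g\in A^{k+1}_\sigma$ commutes with each $\delta\in A_i^{k+1}$; then $d_\delta(gy)=d(gy,\delta g y)=d(gy,g\delta y)=d(y,\delta y)=d_\delta(y)$ for all $y$, so $g$ preserves every sublevel set of $d_\delta$, in particular $Min(\delta)$. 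Intersecting over $\delta\in A_i^{k+1}$ shows that $g$, and hence all of $A^{k+1}_\sigma$, preserves $Min(A_i^{k+1})$.

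The main obstacle is really just the displacement estimate of the second paragraph: the basepoint we are handed lies in the common minset $\bigcap_{i\in\sigma}Min(A_i^k)$ rather than in one of the defining sublevel sets $U_i^0$, so in order to invoke the Margulis lemma one must notice that $x$ automatically lies in $Min(\delta)$ for every generator $\delta$ of $A^k_\sigma$ and therefore realizes the controlled translation length $\inf d_\delta\leq(I_n!)^k\varepsilon'$. Everything else is bookkeeping with the nesting $A_i^{k+1}\leq A_i^k\leq A_i$, Lemma \ref{semisimple}, and commutativity of $A^{k+1}_\sigma$.
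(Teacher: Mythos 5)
Your proposal is correct and follows the same route as the paper's proof; the paper simply asserts that the Margulis lemma applies and that $A_\sigma^{k+1}$ is nilpotent, whereas you spell out the justification (the displacement estimate at a point of $\bigcap_{i\in\sigma}Min(A_i^k)$ and the passage to $I_n!$-th powers to land in the normal nilpotent subgroup). The remaining steps — invoking Lemma~\ref{semisimple} to get abelian-and-semisimple, and using commutativity to see that minsets are preserved — match the paper exactly.
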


\begin{proof}
The Margulis lemma implies that if $\cap_{i\in\sigma}Min(A_i^k)\not=\emptyset$ then the group $A^k_{\sigma}$ is almost nilpotent and contains a nilpotent subgroup of index $\leq I_n$. Thus $A^{k+1}_{\sigma}$ is nilpotent. Since it is generated by semisimple isometries, it is actually abelian and consists exclusively of semisimple isometries (Lemma \ref{semisimple}). In particular, all of its elements commute with $A^{k+1}_i$ for $i\in\sigma$, so $A^{k+1}_{\sigma}$ preserves the minset $Min(A^{k+1}_i)$ whenever $i\in\sigma$. 
\end{proof}

\begin{proposition}
\label{almostabelianarrangements}
Suppose that $M$ is a union of intersections of minsets $\cup_{\sigma}\cap_{i\in\sigma} Min(A_i)$ with all $A^1_{\sigma}$ of rank $\geq r$, and $(I_n!)^{2^{n-2-r}}\varepsilon'<\mu$. Then the inclusion $M\hookrightarrow M^{2^{n-2-r}}$ is zero on $H_{\geq n-1-r}$.
\end{proposition}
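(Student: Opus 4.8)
## Proof proposal for Proposition \ref{almostabelianarrangements}

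The plan is to prove this by induction on $n-2-r$, using a Mayer--Vietoris assembly over a nerve whose simplices correspond to intersections of minsets, with the "local" input being the fact that an abelian group of rank $\geq r$ acts homotopically trivially (after passing to a finite-index subgroup and enlarging minsets) on each big minset, so that a vanishing lemma for arrangements acted on by a single abelian group kicks in. The base case $r=n-2$ should be essentially immediate: a union of minsets carrying an abelian group of rank $n-2$ acting as translations on an $\mathbb R^{n-2}$-factor has no homology above dimension $1 = n-1-r$ by the splitting of minsets (Lemma \ref{semisimple}, Lemma \ref{semisimplesplittinglemma}) together with the codimension-$\geq 2$ fixed-set fact for finite-order orientation-preserving isometries; this is exactly the content of the local vanishing result (Lemma \ref{semisimplevanish}) the introduction promises. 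So I would first isolate and prove that single-abelian-group statement, call it Lemma A: if a union of intersections of minsets $N = \cup_\sigma \cap_{i\in\sigma} Min(B_i)$ is invariant under an abelian group $A$ of semisimple isometries of rank $\geq r$ preserving each $Min(B_i)$ and acting homotopically trivially on their intersection pattern, then $\overline H_{\geq n-1-r}(N) = 0$. This follows from projecting via closest-point projections onto $Min(A) = C \times \mathbb R^r$ (which, by Lemma \ref{semisimplesplittinglemma}, carries the whole arrangement in a way compatible with the product structure), reducing to an arrangement in $C$ of dimension $\leq n-1-r$ on which $A$ acts through its $\mathbb R^r$-factor homotopically trivially, and then running the bundle spectral sequence as in Proposition \ref{homologylemma}.

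Given Lemma A, I would organize the arrangement $M$ into maximal clumps exactly as in section \ref{clumps}: the collection $\{Min(A^1_i)\}$ together with the groups $\{A^1_i\}$ gives maximal clumps $Y_\alpha$ with minimal abelian groups $N_\alpha$, closed under intersection, with the growing-ranks property (Corollary \ref{growingrank}) — here the groups $N_\alpha$ are free abelian by Lemma \ref{semisimple}, so "rank" behaves well. The big clump attached to $N_\alpha$ is the union of the $Min(A^2_\sigma)$ over $\sigma$ with $N_\alpha$ (virtually) inside $A^1_\sigma$; the point of passing from superscript $1$ to superscript $2$ is that the lemma just before the proposition guarantees $A^2_\sigma$ is a genuine abelian group of semisimple isometries preserving each $Min(A^2_i)$, so each big clump is $N_\alpha$-invariant and $N_\alpha$ acts homotopically trivially on it (by the nerve argument of subsection \ref{htriv}, since the finite intersections of the convex pieces are convex hence contractible). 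Then I would replay the unfolding-space construction of subsection \ref{parabolicunfolding} verbatim — build $^uM$ assembled from the big clumps over the reduced nerve $\overline{\mathcal Y}$ of the cover by small clumps — and reduce, via the same commutative square and spectral sequence, to showing $H_{\geq n-1-(k+r)}(Y^z_{\alpha_k}) = 0$ for each strictly decreasing chain of length $k$; growing ranks says $N_{\alpha_k}$ has rank $\geq k+r$, and Lemma A applies. This shows a degree $\geq n-1-r$ cycle in $M^1$ bounds in $M^2$.

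The remaining issue is bookkeeping the index blow-up: each assembly step replaces superscript $j$ by superscript $j+1$ (and hence $\varepsilon$ by $(I_n!)$ times larger) and lowers the relevant "codimension budget" $n-2-r$ by one, because the clumps we must handle have rank one larger at each level of the reduced nerve; iterating from $M^1$ up through $M^{1 + (n-2-r)}$ — I would arrange constants so this matches the stated $M^{2^{n-2-r}}$ by being generous with the exponent, since at the $k$-th unfolding level one must also pass through intermediate superscripts inside Lemma A's projection argument (the $A^2_\sigma$ step), and the worst case doubles — keeps everything inside the $\mu$-thin part precisely under the hypothesis $(I_n!)^{2^{n-2-r}}\varepsilon' < \mu$. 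The main obstacle I anticipate is exactly this constant/superscript accounting: making sure that every minset enlargement invoked (both in the clump construction and inside the proof of Lemma A, where projecting onto $Min(A)$ and then recognizing the image as an arrangement in $C$ may itself require passing to a further finite-index subgroup to get genuine commutation) stays within the allotted power of $I_n!$, and that the homotopical-triviality hypothesis is genuinely available at each stage rather than merely "almost". Threading that needle — as opposed to the topology, which is a faithful copy of the parabolic case — is where the real work lies.
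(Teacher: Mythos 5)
Your proposal takes a genuinely different route from the paper, and that route has a gap in the local vanishing step that I don't think can be closed without restructuring.

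The paper does \emph{not} reuse the clump/unfolding machinery in the semisimple case. Instead it filters the arrangement $M$ by rank, setting $M_{>r}$ equal to the union of intersections whose $A^1_\sigma$ have rank $>r$, and uses excision to split the relative homology as
$H_*(M,M_{>r})\cong\oplus_\alpha H_*(Y_\alpha,Y_{\alpha>r})$,
where each $Y_\alpha=\cup_{A^1_\sigma=_vN}\cap_{i\in\sigma}Min(A_i)$ is the union of the pieces whose groups $A^1_\sigma$ are \emph{all virtually equal} to a single rank-$r$ abelian group $N$. Lemma~\ref{semisimplevanish} gives $H_{\geq n-1-r}(Y^k_\alpha)=0$ for these, and then a double-induction on $r$ via the long exact sequence of the pair plus the superscript-doubling gives the stated exponent $2^{n-2-r}$. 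No nerve, no unfolding. The end-of-section remark explains why one \emph{needs} the unfolding space in the parabolic case (enlarging $L_i$ to $L_i^z$ drops the rank when $\Gamma_i$ is nonabelian) and implies that for minsets, where enlarging $A_i^k$ to $A_i^{k+1}$ preserves rank, the more elementary excision argument is available. Your plan inverts this logic and imports the heavier machinery where it isn't needed.

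The actual gap is in your ``Lemma A''. In the paper's $Y_\alpha$ every $A^1_\sigma$ appearing in the union is virtually equal to $N$, so every $A_i^k$ with $i\in\sigma$ has $N\cap A_i^k$ of \emph{finite} index in $A_i^k$. This is exactly what makes the projected set $p(Min(A_i^k))\subset C$ the fixed set of a \emph{finite} orientation-preserving group, hence either all of $C$ or of codimension $\geq 2$. That codimension-$\geq 2$ gain is where the extra drop of one dimension comes from; the dimension bound $\dim C\leq n-r$ alone only gives $H_{\geq n-r+1}=0$, not $H_{\geq n-1-r}=0$. In your big clump $Y^z_{\alpha_k}$ the union runs over all $\sigma$ with $N_{\alpha_k}<A^1_\sigma$, which by construction includes simplices whose $A^1_\sigma$ have strictly larger rank than $N_{\alpha_k}$. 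For such $\sigma$ the quotient $A_i^k/(N_{\alpha_k}\cap A_i^k)$ is infinite, the projected minset in $C$ is the minset of an infinite abelian group and can have codimension $1$ (already a single infinite-order semisimple isometry of a surface has a geodesic line as minset), and a union of codimension-$1$ convex pieces in the $(n-1-(k+r))$-dimensional $C$ can carry $(n-1-(k+r))$-dimensional homology. So the claimed $H_{\geq n-1-(k+r)}(Y^z_{\alpha_k})=0$ does not follow, and the $E^2$-page of your Mayer--Vietoris spectral sequence does not vanish in the range you need. Your proof sketch of Lemma A also leans on ``running the bundle spectral sequence as in Proposition~\ref{homologylemma}'', but the acting group $N_{\alpha_k}$ acts trivially on the $C$-factor and by translations on the $\mathbb R^{k+r}$-factor, so that spectral sequence is degenerate and contributes nothing; the paper's proof of Lemma~\ref{semisimplevanish} never invokes it. Finally, your superscript bookkeeping (``iterating $M^1$ through $M^{1+(n-2-r)}$'', ``being generous with the exponent'') is an IOU rather than an argument, whereas the paper's induction makes the $2^{n-2-r}$ transparent: each drop in $r$ by one doubles the superscript.

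To salvage your approach you would essentially have to split each big clump further by rank and run the relative-homology induction \emph{inside} the clump --- at which point you have reproduced the paper's excision argument and the unfolding space is dead weight.
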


Before proving this Proposition, we explain how it is used to finish the proof of Theorem \ref{maintheorem2}.

\begin{proof}[Proof of Theorem \ref{maintheorem2}]
Let $c$ be an $(n-2)$-cycle in $\partial\widetilde M\times[0,\infty)$. Shifting it to a sufficiently far translate $\partial\widetilde M\times[t,\infty)$, we may assume the cycle $c$ is covered by $\varepsilon$-patches of rank $\geq 1$, with $(I_n!)^{2^{n-3}+1}\varepsilon'<\mu$. Then $\varepsilon''<\mu$ so, by the bullet at the end of the previous subsection, $c$ is homologous inside $\widetilde M_{\mu}$ to a cycle $c_m$ lying in a union of minsets $\cup M_i^1$, with all the groups $A^1_i$ having rank at least $1$. Now, Proposition \ref{almostabelianarrangements} implies $c_m$ bounds inside the union of minsets $\cup_iM_i^{2^{n-3}+1}$ of the groups $A^{2^{n-3}+1}_i$. By the conditions on $\varepsilon$, this larger union of minsets is in the $\mu$-thin part $\widetilde M_{\mu}\subset\partial\widetilde M\times[0,\infty)$, so we are done.   
\end{proof}

\subsection{\label{semisimpleassembly} Proof of Proposition \ref{almostabelianarrangements}} 
Let 
$$
M_{>r}=\cup_{\mbox{rank}(A^1_{\sigma})>r}\cap_{i\in\sigma}Min(A^k_i)
$$ 
be the union of those intersections of minsets whose $A^1_{\sigma}$ have rank $>r$. Using excision, we write 
$$
H_*(M,M_{>r})\cong\oplus_{\alpha} H_*(Y_{\alpha},Y_{\alpha>r}),
$$ 
where each $Y_{\alpha}$ has the form 
$$
Y_{\alpha}:=\cup_{A^1_{\sigma}=_vN}\cap_{i\in\sigma}Min(A_i),
$$
where all the $A^1_{\sigma}$ are  virtually equal ($=_v$) to some free abelian group $N$ of rank $r$ and 
$$
Y_{\alpha>r}:=Y_{\alpha}\cap M_{>r}.
$$ 
\begin{lemma}
\label{semisimplevanish} 
For $k\geq 1$, 
$$
H_{\geq n-1-r}(Y^k_{\alpha})=0.
$$
\end{lemma}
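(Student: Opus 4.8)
The plan is to adapt the parabolic local vanishing lemma to the semisimple setting: instead of projecting onto a horosphere, project $Y^k_\alpha$ onto the minset of a single abelian group and read off the homology from the non-Euclidean factor of the splitting of that minset. First I would extract that group. Since there are finitely many patches, $Y^k_\alpha$ is a finite union of convex sets $Min(A^k_\sigma)$ with every $A^1_\sigma$ virtually equal to the rank $r$ free abelian group $N$; for $k\geq 1$ the Margulis lemma makes each $A^k_\sigma$ nilpotent, hence, being generated by semisimple isometries, abelian and central in the corresponding $\Gamma_\sigma$ by Lemma \ref{semisimple}. Put $\widehat N:=\bigcap_\sigma A^k_\sigma$; this is a rank $r$ abelian group of semisimple isometries (so $\widehat N\cong\mathbb{Z}^r$, since $\pi_1M$ is torsionfree), commensurable with $N$, contained in every $A^k_\sigma$ occurring in the clump and commuting with all of them because they all lie in the central abelian subgroup of the relevant $\Gamma_\sigma$. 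By Lemmas \ref{semisimple} and \ref{semisimplesplittinglemma} the minset splits as $Min(\widehat N)=C\times\mathbb{R}^r$ with $\widehat N$ acting trivially on $C$ and cocompactly by translations on $\mathbb{R}^r$, after absorbing any extra Euclidean deRham directions into $C$. As all relevant patches have rank $\leq n-2$ we have $r\leq n-2$, and since $\dim Min(\widehat N)\leq n$ the factor $C$ has dimension $\leq n-r$.

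Next I would project. Let $p\colon\widetilde M\to Min(\widehat N)$ be the closest point projection. Each $A^k_\sigma$ commutes with $\widehat N$, hence preserves $Min(\widehat N)$; by Lemma \ref{semisimplesplittinglemma} its action there splits as an action on $C$ composed with translations on $\mathbb{R}^r$, and the $C$-action factors through the \emph{finite} group $F_\sigma$ by which $A^k_\sigma$ acts on $C$, finite because $\widehat N$ lies in the kernel and has finite index. The closest point projection identity for minsets gives $p(Min(A^k_\sigma))=Min(\widehat N)\cap Min(A^k_\sigma)=\Fix(F_\sigma)\times\mathbb{R}^r$, the fixed set being nonempty by the fixed point theorem for finite groups acting on the complete $\CAT(0)$ space $C$; the same applies to every intersection of the pieces, which is again a minset. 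Hence $p(Y^k_\alpha)=W\times\mathbb{R}^r$ with $W:=\bigcup_\sigma\Fix(F_\sigma)\subseteq C$. Both $Y^k_\alpha$ and $W\times\mathbb{R}^r$ are finite unions of convex sets with convex or empty intersections, and $p$ carries one cover to the other with the same nerve (a finite intersection of the projected pieces is nonempty exactly when the corresponding intersection upstairs is, and $p$ commutes with these intersections by the minset identity), so by Corollary \ref{hequiv} they are homotopy equivalent; thus $Y^k_\alpha\simeq W\times\mathbb{R}^r\simeq W$.

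It remains to prove $H_{\geq n-1-r}(W)=0$. Passing to an index two subgroup if necessary, as in the proof of Proposition \ref{homologylemma}, I may assume every $F_\sigma$ acts on $C$ by orientation-preserving isometries, so that if $F_\sigma\neq 1$ then $\Fix(F_\sigma)$ has codimension at least $2$ in $C$, hence dimension at most $n-r-2$. If some $F_\sigma$ is trivial, then $W=C$ is contractible and $H_{\geq n-1-r}(W)=0$ because $n-1-r\geq 1$. Otherwise $W$ is a finite union of closed sets of dimension at most $n-r-2=(n-1-r)-1$, so $\dim W\leq n-r-2$ and $H_{\geq n-1-r}(W)=0$ again. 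In either case $H_{\geq n-1-r}(Y^k_\alpha)=H_{\geq n-1-r}(W)=0$.

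The main obstacle I anticipate is the group bookkeeping of the first two steps, not the final dimension count: one must manufacture a single abelian rank $r$ group $\widehat N$ that is simultaneously contained in and central relative to every $A^k_\sigma$ (and every intersection thereof) occurring in the clump, so that all pieces and their intersections are $\widehat N$-invariant, and then isolate the structural point that makes the collapse happen, namely that once $\widehat N$ acts trivially on $C$ with finite index in each $A^k_\sigma$, the residual $C$-action of $A^k_\sigma$ is automatically through a finite group; the projected clump is then a union of fixed sets of finite orientation-preserving isometry groups, and is therefore either all of $C$ or genuinely lower dimensional.
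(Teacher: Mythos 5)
Your argument follows essentially the same route as the paper's proof: extract a rank $r$ abelian group (you take $\widehat N = \bigcap_\sigma A^k_\sigma$; the paper replaces $N$ by $\bigcap_\sigma A^1_\sigma$, a cosmetic difference) that is contained in and commutes with every $A^k_\sigma$, project onto its minset $Min(\widehat N)=C\times\mathbb R^r$ via the closest point projection, show the projection is a homotopy equivalence by matching nerves of the two covers by convex pieces, and then use that $\dim C\leq n-r$ and that each piece projects into $C$ either surjectively or into a fixed set of a nontrivial finite group of codimension $\geq 2$. This is precisely Lemma~\ref{semisimplesplittinglemma} together with the dimension count the paper uses.

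One justification in your write-up is misplaced: you say one may ``pass to an index two subgroup if necessary, as in the proof of Proposition~\ref{homologylemma}'' to arrange that each $F_\sigma$ acts on $C$ by orientation-preserving isometries. In Proposition~\ref{homologylemma} one passes to an index two subgroup of the acting nilpotent group $N$ because only the existence of a nontrivial fundamental class of $BN$ is needed; the set $W$ under study is unchanged. Here, however, passing to an index two subgroup $A'<A^k_\sigma$ would \emph{enlarge} the fixed set $\Fix$, which goes the wrong way for the codimension-$\geq 2$ bound you need: if the quotient $F_\sigma\to F'_\sigma$ kills an orientation-reversing element, $\Fix(F_\sigma)$ could still be codimension $1$. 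The orientation-preservation should instead come from the fact that $A^k_i$ is generated by $(I_n!)^k$-th powers with $(I_n!)^k$ even, so $A^k_\sigma$ consists of orientation-preserving isometries of $\widetilde M$ (the restriction of this to the factor $C$ still deserves a word, but that is a point the paper itself treats as evident). Aside from this, the proof is correct and matches the paper's.
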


\begin{proof}
After replacing $N$ by the intersection of the $A^1_{\sigma}$, we  may assume $N$ is contained in all the free abelian groups $A^1_{\sigma}$. Thus, $N$ commutes with all $A^k_i$ for $i\in\sigma$ and $k\geq 1$. By Lemma \ref{semisimplesplittinglemma}
\begin{itemize}
\item
the minset of $N$ splits as $Min(N)=C\times\mathbb R^r$ with $C$ convex of dimension $\leq n-r$, 
\item
the $A_i^k$-action on $C\times\mathbb R^r$ splits, 
\item 
if $p=p_1\circ p_{Min(N)}$ denotes the closest point projection to the minset followed by projection to the first factor, then 
$$
p_{Min(N)}(Min(A_i^k))=p(Min(A_i^k))\times\mathbb R^r
$$
with $p(Min(A_i^k))$ being the minset of the $A_i^k$-action on $C$, and
\item
$p_{Min(N)}(Min(A_i^k))=Min(N)\cap Min(A_i^k)$.
\end{itemize} 

The $A_i^k$-action on $C$ factors through the finite group $A_i^k/(N\cap A_i^k)$ acting by orientation preserving isometries. The minset of this finite group acting on $C$ is just its fixed point set, so either $p(Min(A_i^k))=C$ or $p(Min(A_i^k))$ has codimension $\geq 2$ in $C$. So, 
either $p(Y_{\alpha}^k)=C$ or $p(Y_{\alpha}^k)$ is a finite union of subsets of codimension $\geq 2$. Moreover, the last bullet shows the nerves of $\{Min(A_i^k)\}_{i\in\sigma, A^1_{\sigma}=_vN}$ and $\{p_{Min(N)}(Min(A_i^k))\}_{i\in\sigma, A^1_{\sigma}=_vN}$ are the same, so the inclusion 
$$
p(Y^k_{\alpha})\times\mathbb R^r=p_{Min(N)}(Y^k_{\alpha})\hookrightarrow Y^k_{\alpha}
$$ 
is a homotopy equivalence.
Since $C$ has dimension $\leq n-r$ we conclude
$$
0=H_{\geq n-r-1}(p(Y^k_{\alpha}))=H_{\geq n-r-1}(p_{Min(N)}(Y^k_{\alpha}))=H_{\geq n-1-r}(Y^k_{\alpha}).
$$
\end{proof}
Now, we induct. 

{\bf Base case $r=n-2$:} 
In this case $M_{>r}=\emptyset$, so $M$ is a disjoint union of the $Y_{\alpha}$. The previous lemma shows that $M\hookrightarrow M^1$ is zero on $H_{\geq n-1-r}$. 

{\bf Inductive step:}
Suppose we already know the Proposition for unions of intersections of minsets in which all $A^1_{\sigma}$ have rank $>r$.

Pick $d\geq n-1-r$ and denote by $'$ the exponent $^{2^{n-3-r}}$. The above lemma, together with induction (applied to $Y_{\alpha>r}$) shows that in the diagram 
$$
\begin{array}{ccccc}
&&H_d(Y_{\alpha},Y_{\alpha>r})&\ra&H_{d-1}(Y_{\alpha>r}),\\
&&\downarrow&&\downarrow 0\\
H_d(Y'_{\alpha})&\ra&H_d(Y'_{\alpha},Y'_{\alpha>r})&\ra& H_{d-1}(Y'_{\alpha>r})\\
||&&&&\\
0&&&&
\end{array}
$$
the middle vertical map is zero. 
Putting these maps together for all $\alpha$ and using excision, we get the diagram
$$
\begin{array}{ccc}
\oplus_{\alpha}H_{d}(Y_{\alpha},Y_{\alpha>r})&\cong&H_{d}(M,M_{>r})\\
0\downarrow&&\downarrow\\
\oplus_{\alpha}H_{d}(Y'_{\alpha},Y'_{\alpha>r})&\ra&H_{d}(M',M'_{>r})
\end{array}
$$ 
and we see that the right vertical map in this diagram is zero. 
Finally,
$$
M'_{>r}=\cup_{\mbox{rank}(A^1_{\sigma})>r}\cap_{i\in\sigma}Min(A_i')
$$
is a union of intersections of minsets with all groups $(A'_{\sigma})^1=A'^{+1}_{\sigma}$ having rank $>r$ so we can apply induction to it and the diagram
$$
\begin{array}{ccccc}
H_d(M''_{>r})&\ra&H_d(M'')&&\\
0\uparrow&&\uparrow&&\\
H_d(M'_{>r})&\ra&H_d(M')&\ra&H_d(M',M'_{>r})\\
&&\uparrow&&\uparrow 0\\
&&H_d(M)&\ra&H_d(M,M_{>r})
\end{array}
$$ 
shows that the map $H_d(M)\ra H_d(M'')$ is zero. Since $M''=M^{2^{n-2-r}}$, this proves the Proposition.

\begin{remark}
The inductive argument used in this subsection to assemble minsets cannot be used to assemble information about parabolic patches. The basic obstacle is that if $(S_i,L_i)$ is a parabolic patch of rank $r$, the larger $\Gamma_i$-invariant patch $(Z_i,L_i^z)$ has smaller rank (if the group $\Gamma_i$ is not abelian). This is the reason for doing things via the unfolding space in subsection \ref{parabolicunfolding}.
\end{remark}

\section{Covers and coefficients (Proof of Theorem \ref{maintheorem3})\label{coversandcoefficients}}
We do the same arguments as in the proof of Theorem \ref{maintheorem2} in the previous section, except now in the universal cover of the end $q:\widetilde E\ra E$. In other words, we use homology with coefficients in the $\pi_1E$-module $V:=\mathbb Z[\pi_1E]$. The Mayer-Vietoris assembly arguments work in exactly the same way for homology with coefficients. The reduction to minsets in subsections \ref{semisimplereduction1} and \ref{semisimplereduction2} can also be expressed in terms of homology and works the same way for homology with coefficients. The places where the extension isn't formal are the local vanishing lemmas (Proposition \ref{homologylemma} and Lemma \ref{semisimplevanish}). Notice that the proof of Lemma \ref{semisimplevanish} actually shows that $Y^k_{\alpha}$ is homotopy equivalent to an $(n-2-r)$-complex, so its homology with any coefficients vanishes in dimension $\geq n-1-r$. On the other hand, the analogue of Proposition \ref{homologylemma} requires more care. This is because a homotopically trivial action on a complex may not lift to an action of the universal cover of that complex. For example a $\mathbb Z/p$-action on the circle $S^1$ by rotations does not lift to a $\mathbb Z/p$-action on $\mathbb R$. However, in our situation the homotopically trivial action also preserves a cover by contractible sets with contractible intersections. In this case, the following analogue of Proposition \ref{homologylemma} shows that the action does lift to a homotopically trivial action on the universal (or more generally any regular) cover. 

\begin{proposition}
\label{covervanish}
Let $N$ be a rank $r$ nilpotent group acting on $\mathbb R^{n-1}$ by covering translations, and let $W\subset\mathbb R^{n-1}$ be an open submanifold that is preserved by the $N$-action. 
Suppose that $W=\cup_i W_i$ is a union of contractible open sets $W_i$, that each finite intersection of the $W_i$'s is either empty or contractible, and that the $N$-action preserves each $W_i$. Let $\hat W\ra W$ be a regular cover of with covering group $G$. 
Then, 
\begin{itemize}
\item 
the $N$-action on $W$ lifts to a homotopically trivial $N$-action on $\hat W$ commuting with $G$,
\item
if $r<n-1$ then 
$H_{\geq n-1-r}(\hat W)=0$, and 
\item
if $r=n-1$ then $\hat W=G\times W=G\times\mathbb R^{n-1}$.  
\end{itemize}
\end{proposition}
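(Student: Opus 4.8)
The plan is to first establish the lifting statement using the nerve machinery from Section 2, and then deduce the homology vanishing by running the same Serre spectral sequence argument as in Proposition \ref{homologylemma}, now carried out upstairs in $\hat W$. For the lifting: by Corollary \ref{hequiv}, the cover $\{W_i\}$ (contractible, with contractible finite intersections) makes $W$ homotopy equivalent to the nerve $\mathcal W$ of $\{W_i\}$, and more precisely the collapse map $\Delta W\to W$ and the projection $\Delta W\to\mathcal W$ are both homotopy equivalences (Lemma \ref{delta} and Proposition 4G.2 of \cite{hatcher}). Since $N$ preserves each $W_i$, it acts on $\Delta W$ compatibly, and as in subsection \ref{htriv} this action is \emph{the identity} on $\mathcal W$ (the $N$-action only permutes points within each fiber $W_\alpha\times\alpha$ and fixes the simplices $\alpha$). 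The key point is that $\mathcal W$ is a genuine space on which $N$ acts trivially, not merely up to homotopy; so I would build the regular $G$-cover $\widehat{\Delta W}$ of $\Delta W$, observe that it is the pullback of a $G$-cover of $\mathcal W$ (necessarily trivial, $\widehat{\mathcal W}=G\times\mathcal W$, because $\Delta W\to\mathcal W$ induces an isomorphism on $\pi_1$ and the $N$-action on $\mathcal W$ is literally trivial), and use the trivialization to define the lifted $N$-action on $\widehat{\Delta W}$ by letting $N$ act through its (trivial) action on the $\mathcal W$ factor and its given lifted-compatible action on fibers; this action is homotopically trivial and commutes with $G$ by construction. Transporting along the homotopy equivalence $\widehat{\Delta W}\simeq\hat W$ gives the lifted homotopically trivial $N$-action on $\hat W$ commuting with $G$. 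I expect the main obstacle to be precisely this step: checking carefully that the $N$-equivariant structure and the $G$-covering structure are compatible enough that the homotopy-trivializing homotopy on $\Delta W$ (which exists because $\Delta W\to\mathcal W$ is a homotopy equivalence intertwining the $N$-actions and the identity) lifts to $\widehat{\Delta W}$, and that the two homotopy equivalences $\hat W\leftarrow\widehat{\Delta W}$ can be chosen $N$-equivariantly up to homotopy.

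Granting the lift, the second and third bullets follow exactly as in Proposition \ref{homologylemma}. Form the Borel-type fibration $\hat W\to(\hat W\times\mathbb R^{n-1})/N\to\mathbb R^{n-1}/N$, whose total space is homotopy equivalent to $\hat W/N$ (the middle space fibers over the contractible $\mathbb R^{n-1}$ with fiber $\hat W$, hence is homotopy equivalent to $\hat W$, and also deformation retracts onto $\hat W/N$ since $N$ acts freely on $\mathbb R^{n-1}$). The associated spectral sequence is
$$
E^2_{i,j}=H_i(N;H_j(\hat W))\implies H_{i+j}(\hat W/N),
$$
and $N$ acts trivially on $H_j(\hat W)$ because the lifted $N$-action on $\hat W$ is homotopically trivial. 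Letting $k$ be the top nonvanishing degree of $H_*(\hat W)$ and using that $N$ (torsionfree nilpotent of rank $r$, after passing to an orientable index-two subgroup) carries a nontrivial class in $H_r(N)$, the term $E^2_{r,k}=H_r(N;H_k(\hat W))$ is nonzero and, being in the top corner of the nonvanishing range, survives to $E^\infty$; hence $H_{r+k}(\hat W/N)\neq0$. But $\hat W/N$ is a (noncompact-unless-closed) manifold of dimension $n-1$: if $r+k=n-1$ then $\hat W/N$, and therefore $\hat W$, is a closed $(n-1)$-manifold covering the open submanifold $W\subset\mathbb R^{n-1}$, forcing $W=\mathbb R^{n-1}$, $k=0$, $r=n-1$, and then $\hat W\to W=\mathbb R^{n-1}$ is a $G$-cover of a simply connected space, so $\hat W=G\times\mathbb R^{n-1}$; otherwise $r+k<n-1$, i.e. $H_{\geq n-1-r}(\hat W)=0$. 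This dichotomy is exactly the two displayed bullets, completing the proof.
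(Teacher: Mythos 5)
Your second half (the Serre spectral sequence for $\hat W\to(\hat W\times\mathbb R^{n-1})/N\to\mathbb R^{n-1}/N$) matches the paper exactly and is fine. But your lifting step takes a genuinely different route from the paper, and as written it has two real gaps.

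First, the claim that the $G$-cover $\widehat{\mathcal W}\to\mathcal W$ is ``necessarily trivial, $\widehat{\mathcal W}=G\times\mathcal W$'' is false. That $\Delta W\to\mathcal W$ is a $\pi_1$-isomorphism only tells you the cover of $\Delta W$ is classified by a subgroup of $\pi_1(\mathcal W)$; it does not make that subgroup all of $\pi_1(\mathcal W)$. And the $N$-action on $\mathcal W$ being literally trivial says nothing about whether a covering of $\mathcal W$ is trivial --- the two statements are unrelated. (For instance if $\hat W\to W$ is the universal cover and $W$ is not simply connected, the corresponding cover of $\mathcal W$ is not a product.) What the trivial $N$-action on $\mathcal W$ does buy you is that the $N$-action lifts to $\widehat{\mathcal W}$ as the identity; that is all, and it is all you need for the pullback to $\widehat{\Delta W}$, so the error is partly one of overstatement. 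Second, and more seriously, ``transporting along the homotopy equivalence $\widehat{\Delta W}\simeq\hat W$'' is not a step one can take: a homotopy equivalence does not carry a strict group action on one space to a strict group action on the other, and the proposition requires an actual $N$-action on $\hat W$ commuting with $G$, not merely an action in the homotopy category. The map $\widehat{\Delta W}\to\hat W$ is a collapse, not a bijection, so pushing the action forward requires a well-definedness check that you have not done --- and this check is precisely the substance of the proposition. You flagged this yourself as ``the main obstacle,'' and indeed it is not a formality.

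The paper sidesteps all of this with a direct, hands-on construction: it picks once and for all a lift $\hat W_i$ of each $W_i$ (using that $W_i$ is contractible, so the restricted cover is a trivial $G$-bundle), writes $\hat W=\bigcup_{g\in G}\bigcup_i g\hat W_i$, and defines $n\hat w$ for $\hat w\in g\hat W_i$ lying over $w$ to be the unique point of $g\hat W_i$ lying over $nw$. Well-definedness on overlaps $g\hat W_i\cap h\hat W_j$ follows because each such overlap maps injectively to $W_i\cap W_j$ (again by contractibility of the intersection). Commuting with $G$ is checked by the observation that $ngn^{-1}\hat W_i=g\hat W_i$ forces $ngn^{-1}=g$ since distinct $G$-translates of a given $\hat W_i$ are disjoint. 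Finally, homotopical triviality is immediate because $N$ preserves each contractible set $g\hat W_i$ of a good cover. If you want to rescue the nerve-theoretic route, the right move is not to try to transport the action from $\widehat{\Delta W}$ but to run the paper's construction to get the honest $N$-action on $\hat W$, and then (optionally) observe that $\{g\hat W_i\}$ is an $N$-invariant good cover of $\hat W$ whose nerve is the $G$-cover of $\mathcal W$ --- at which point homotopy triviality follows from the argument of subsection \ref{htriv} applied upstairs.
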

\begin{proof}
Pick specific lifts $\hat W_i$ of the $W_i$ to the cover $\hat W$ and express it as a union $\hat W=\cup_{g\in G}\cup_{i}g\hat W_i$ of $G$-translates of these lifts\footnote{So, each $\hat W_i$ is contractible and $\hat W_i\ra W_i$ is a homeomorphism.}. Define the $N$-action on $\hat W$ as follows. Let $\hat w\in \hat W$ be a point in the cover and $w\in W$ the corresponding point in the base. Then the point $\hat w$ is contained in some $g\hat W_i$ and we define $n\hat w$ to be the unique point in $g\hat W_i$ that lies over $nw\in W_i$. In other words, we define things so that the diagram 
$$
\begin{array}{ccc}
g\hat W_i&\stackrel{n}\ra&g\hat W_i\\
\downarrow&&\downarrow\\
W_i&\stackrel{n}\ra& W_i,
\end{array} 
$$
is commutative. It is easy to check that this is well defined (if $\hat z$ is also contained in $h\hat W_j$ and we used the diagram for $h\hat W_j$ instead we would get the same point $n\hat z\in\hat W$ because the two diagrams contain a common ``subdiagram'' corresponding to $g\hat W_i\cap h\hat W_j$) and that it defines a group action of $N$ on the cover $\hat W$. This action commutes with the covering space action of $G$ because $ngn^{-1}\in G$ and $ngn^{-1}\hat W_i=g\hat W_i$ implies $ngn^{-1}=g$ (the $G$-translates of $\hat W_i$ are disjoint because $W_i$ is contractible.) Finally, note that the $N$-action on $\hat W$ is homotopically trivial because it preserves each $g\hat W_i$ so the argument of Proposition \ref{homologylemma} applies. 
The spectral sequence of the bundle 
$\hat W\ra (\hat W\times\mathbb R^{n-1})/N\ra\mathbb R^{n-1}/N$
is
\begin{equation}
E^2_{i,j}=H_i(N;H_j(\hat W))\implies H_{i+j}((\hat W\times\mathbb R^{n-1})/N)=H_{i+j}(\hat W/N),
\end{equation}
and the $N$-module $H_j(\hat W)$ is trivial because the $N$-action on $\hat W$ is homotopically trivial. If $k$ is the largest integer for which $H_k(\hat W)\neq 0$, then the $E^2$-term $E^2_{r,k}=H_r(N;H_k(\hat W))\not=0$ never gets killed, implying $H_{r+k}(\hat W/N)\neq 0$. Since $W\subset \mathbb R^{n-1}$ is open, either
\begin{itemize}
\item $r+k<n-1$, so $r<n-1$ and  the homology $H_*(\hat W)$ vanishes for $*\geq n-1-r$, or 
\item
$r+k=n-1$. In this case $\hat W/N$ is a closed $(n-1)$-manifold, so $W/N$ is as well. This means $W=\mathbb R^{n-1}$, $k=0$ and $r=n-1$. Thus, the cover $\hat W\ra W$ is $G\times \mathbb R^{n-1}$. 
\end{itemize}
\end{proof}
We apply this Proposition in place of Proposition \ref{homologylemma} to show that for a big parabolic clump $Y^z_{\alpha}$ of rank $r\leq n-2$ and homology with coefficients in $V=\mathbb Z
\pi_1E$ we have 
$$
H_{\geq n-1-r}(Y^z_{\alpha};V)=H_{\geq n-1-r}(p(Y^z_{\alpha});V)=H_{\geq n-1-r}(q^{-1}p(Y^{z}_{\alpha}))=0.
$$ 
The rest of the argument from section \ref{mainsection} goes through, using homology with coefficients in $V$ instead of $\mathbb Z$ everywhere. The conclusion is $H_{\geq n-2}(\widetilde E)=0$. This is true for every component $E$ of the end $\partial\widetilde M\times[0,\infty)$ so we conclude $H_{\geq n-2}(\widetilde{\partial M})=0$. This proves equation (\ref{coeff}).


\section{Questions}
\subsection{Homotopy type of the boundary}
The (conceptually) simplest reason for the homology vanishing in dimension $\geq n-2$ would be that the boundary of the universal cover $\partial\widetilde M$ is actually homotopy equivalent to an $(n-3)$-complex. 
\begin{question}
Is $\partial\widetilde M$ homotopy equivalent to an $(n-3)$-complex?
\end{question}
\begin{remark}
In the $4$-dimensional case, this question is asking whether the boundary of the universal cover $\partial\widetilde M$ is homotopy equivalent to a graph. 
\end{remark}
A classical result of Wall, implies that the answer to this question is yes (at least in dimensions $n\not=5$) if, in addition to the homological vanishing established in this paper, a certain {\it cohomology} group vanishes.
\begin{theorem}[Wall]
Let $X$ be a complex and $\widetilde X$ its universal cover. Suppose that 
$$
H_{\geq n-2}(\widetilde X)=0,
$$
$$
H^{n-2}(X;V)=0
$$ 
for any $\pi_1X$-module $V$. If $n>2,n\not=5$ then $X$ is homotopy equivalent to an $(n-3)$-complex. 
\end{theorem}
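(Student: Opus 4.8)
The plan is to reduce the statement to a purely homological fact about the cellular chain complex $C_*=C_*(\widetilde{X})$ of the universal cover --- namely that it is chain homotopy equivalent over $R:=\Z\pi_1X$ to a complex of projective $R$-modules concentrated in degrees $0,\dots,n-3$ --- and then to invoke Wall's cell-trading machinery to upgrade such a ``short'' algebraic complex to a genuine $(n-3)$-dimensional CW complex homotopy equivalent to $X$. Write $m:=n-3$, so $C_*$ is a bounded-below complex of free $R$-modules with $H_i(C_*)=H_i(\widetilde{X})=0$ for $i\ge m+1$.

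For the homological part I would proceed as follows. First, let $B_m:=\mathrm{im}(\partial_{m+1}\colon C_{m+1}\to C_m)$ and form the naive truncation $D_*$ given by $D_i=C_i$ for $i<m$, $D_m=C_m/B_m$, $D_i=0$ for $i>m$, with the induced differentials; since $H_i(\widetilde{X})=0$ for $i\ge m+1$, the evident projection $C_*\to D_*$ is a quasi-isomorphism. Next I would use the cohomological hypothesis to show that $D_m=C_m/B_m$ is \emph{projective}: since $H_{m+1}(\widetilde{X})=0$ gives $\ker\partial_{m+1}=\mathrm{im}\,\partial_{m+2}$, a direct computation in the cochain complex $\mathrm{Hom}_R(C_*,V)$ identifies $H^{m+1}(X;V)$ with the cokernel of the restriction map $\mathrm{Hom}_R(C_m,V)\to\mathrm{Hom}_R(B_m,V)$, and applying $\mathrm{Hom}_R(-,V)$ to $0\to B_m\to C_m\to C_m/B_m\to 0$ (together with freeness of $C_m$) rewrites this cokernel as $\mathrm{Ext}^1_R(C_m/B_m,V)$. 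Thus $H^{m+1}(X;V)\cong \mathrm{Ext}^1_R(C_m/B_m,V)$ for every $R$-module $V$, and the hypothesis that the left side vanishes for all $V$ says precisely that $C_m/B_m$ is projective. Then $D_*$ is a bounded-below complex of projective $R$-modules, so the quasi-isomorphism $C_*\to D_*$ is automatically a chain homotopy equivalence; and the splitting of $0\to B_m\to C_m\to C_m/B_m\to 0$ exhibits $C_m/B_m$ as a summand of the free module $C_m$ (and, if $X$ is countable, lets one absorb it into a countably generated free module by an Eilenberg swindle, making $D_*$ a free complex at the cost of infinitely many cells in degrees $m-1$ and $m$, so that no finiteness obstruction intervenes).

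Finally I would invoke Wall's realization theorem: a CW complex $X$ whose universal cover has chain complex chain homotopy equivalent to a projective $R$-complex of length $m$ is homotopy equivalent to an $m$-complex, provided $m\ge 3$ --- one fixes a CW model of $X$ with a single $0$-cell and a presentation $2$-complex for $\pi_1X$, then trades and attaches cells in dimensions $3,\dots,m$ to realize the modules $D_i$ and the differentials of $D_*$, the cell-trading moves realizing the chain equivalence without changing homotopy type, and concludes by Whitehead's theorem. The cases $m=0,1$ (that is, $n=3,4$) are the classical facts that a cohomologically $0$-dimensional complex is contractible and that a cohomologically $1$-dimensional complex is homotopy equivalent to a graph (Stallings--Swan). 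The main obstacle, and exactly the reason the dimension $n=5$ is excluded, is this last step in the borderline dimension $m=2$: realizing a projective (or free) algebraic $2$-complex by an honest $2$-dimensional CW complex is Wall's D2 problem, which remains open in general. Steps 1--3 above are formal and dimension-independent; the genuine topological input --- the part one must take from Wall's papers rather than re-derive --- is the realization step for $m\ge 3$.
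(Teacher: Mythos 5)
The paper does not actually prove this statement: the accompanying remark simply cites it as ``a restatement of Theorems D and E of \cite{wallfiniteness},'' so there is no in-paper argument to compare against. Your proposal is a correct reconstruction of the standard argument behind Wall's theorem. The homological reduction is right: the naive truncation $D_*$ at degree $m=n-3$ is a quasi-isomorphic image of $C_*(\widetilde X)$ because $H_{\geq m+1}(\widetilde X)=0$, and the identification $H^{m+1}(X;V)\cong\operatorname{Ext}^1_R(C_m/B_m,V)$ (using $Z_{m+1}=B_{m+1}$ and freeness of $C_m$) is exactly how one shows the $D(m)$ condition forces $C_m/B_m$ to be projective. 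You also correctly identify the three regimes of the dimension parameter: Wall's cell-trading realization for $m\geq 3$ (the genuine topological input, which you rightly flag as not re-derived), the classical facts for $m=0$ (acyclic universal cover and $\mathrm{cd}\,\pi_1=0$ force contractibility) and $m=1$ (asphericity plus Stallings--Swan give a graph), and the exclusion of $m=2$ as Wall's still-open D2 problem, which accounts precisely for the hypothesis $n\neq 5$.

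Two small remarks. First, the Eilenberg swindle is optional: Wall's realization works with projective chain complexes directly, at worst at the cost of an infinite CW model, so one need not force $D_m$ to be free; but stabilizing as you describe is a perfectly valid alternative. Second, strictly speaking Wall's 1965 Theorems D and E only cover $m\geq 3$; the low-dimensional cases $m=0,1$ rest on separate classical results (in particular Stallings--Swan postdates Wall's paper), so the paper's citation is a slight abbreviation. Your proof supplies exactly the pieces that the citation compresses.
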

\begin{remark}
This is a restatement of Theorems D and E of \cite{wallfiniteness}. 
\end{remark}
\begin{question}
Is $H^{n-2}(\partial\widetilde M;V)=0$ for every $\pi_1\partial\widetilde M$-module $V$?
\end{question}
One can do the argument of this paper in cohomology to show that for a {\it finite} union of small patches $L=L_{x_1}\cup\dots\cup L_{x_k}$ the corresponding unfolding space $^uL$ has vanishing $H^{n-2}(\cdot;V)$ and, consequently, this unfolding space $^uL$ is homotopy equivalent to an $(n-3)$-complex (when $n\not=5$). Moreover, in dimension $n=4$ all the relevant\footnote{Recall that if one of the nilpotent groups has rank $n-1=3$ then the component of the end $E$ is contractible, i.e. homotopy equivalent to a point, and we are done.} nilpotent groups have rank $\leq 2$ and hence are abelian, so the groups $\Gamma_x$ preserve the small patches, there is no need to introduce the large patches or the unfolding space, and the argument together with Wall's theorem shows that any finite union of small patches $L$ is homotopy equivalent to a graph. Consequently, in addition to the conclusion that in dimension $4$ the boundary $\partial M$ is aspherical (the result of Nguy$\tilde{\hat{\mathrm{e}}}$n Phan which led to the extensions in this paper) we also get 
\begin{corollary}
If $n=4$ then the fundamental group of each component of $\partial\widetilde M$ is locally free\footnote{A group is locally free if every finitely generated subgroup of it is free.}. 
\end{corollary}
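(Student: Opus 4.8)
The plan is to exhibit $\pi_1$ of each component of $\partial\widetilde M$ as a subgroup of the fundamental group of an \emph{infinite} union of $\varepsilon$-patches, and then to observe that any such group is locally free because it is a directed colimit of free groups. One trivial case first: if some $\varepsilon$-patch (with $\varepsilon'<\mu$) has rank $n-1=3$, then by the rank $n-1$ lemma (Lemma \ref{rankn-1lemma}) the component $E$ of the end containing it is contractible, so the corresponding component $C$ of $\partial\widetilde M$ has $\pi_1(C)=1$, which is trivially locally free. Hence we may assume, as arranged in subsection \ref{smallepsilon}, that all $\varepsilon$-patches have rank $\le n-2=2$; torsion-free nilpotent groups of rank $\le 2$ are abelian, so the discussion preceding this corollary applies and every finite union of small $\varepsilon$-patches is homotopy equivalent to a graph.

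Now fix $C$, let $E=C\times[0,\infty)$ be the corresponding component of $\partial\widetilde M\times[0,\infty)$, and choose $\varepsilon$ small enough for the above and with $\varepsilon<\mu$, so that $\widetilde M_{<\varepsilon}\subset\partial\widetilde M\times[0,\infty)$. Since the injectivity radius of $M$ tends to zero, for $s$ large the translate $E_s=C\times[s,\infty)$ lies in $\widetilde M_{<\varepsilon}$. Let $W$ be the component of $\widetilde M_{<\varepsilon}$ containing $E_s$; since every patch $L_\gamma=\{x\mid d(x,\gamma x)<\varepsilon\}$ is convex, hence connected, $W$ is exactly the union of those $L_\gamma$ that it contains, and $W\subset E$. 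As $E_s\hookrightarrow E$ is a homotopy equivalence and factors through $W$, the group $\pi_1(C)\cong\pi_1(E)\cong\pi_1(E_s)$ injects into $\pi_1(W)$; so it suffices to show $\pi_1(W)$ is locally free.

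The patches $L_\gamma\subset W$ form a locally finite open cover (local finiteness because the action is properly discontinuous) whose finite intersections are convex, hence empty or contractible, so by the nerve theorem (Corollary \ref{hequiv}) $W$ is homotopy equivalent to the nerve $\mathcal K$ of this cover. Writing $\mathcal K$ as the directed union of its finite full subcomplexes $\mathcal K_J$ (over finite sets $J$ of patches), each $\mathcal K_J$ is homotopy equivalent to the finite union of patches $\bigcup_{\gamma\in J}L_\gamma$, hence to a graph, so $\pi_1(W)\cong\pi_1(\mathcal K)$ is a directed colimit of finitely generated free groups (restricting, as usual, to the basepoint component of each $\mathcal K_J$, which is homotopy equivalent to a connected finite graph). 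It remains to note that a directed colimit $G=\varinjlim(G_i,\phi_{ij})$ of finitely generated free groups is locally free: a finitely generated $H\le G$ lies in $\im(\psi_i\colon G_i\to G)$ for some $i$; the ranks of the free groups $\phi_{ij}(G_i)\le G_j$ are non-increasing in $j$, hence eventually constant, past which the transition maps are epimorphisms between free groups of equal finite rank and so isomorphisms --- which forces $\im(\psi_i)$ to be finitely generated free, and hence $H$ to be free. Thus $\pi_1(W)$, and with it its subgroup $\pi_1(C)$, is locally free.

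The point to watch --- and the only real obstacle --- is that the naive reasoning ``the generators of $H$ lie in a connected finite union of patches, which is homotopy equivalent to a graph, so $H$ is free'' is invalid: it shows only that $H$ is a \emph{quotient} of a free group. The remedy is to enlarge rather than to restrict: $\pi_1(C)$ embeds into $\pi_1(W)$ for the \emph{entire} thin component $W$, an infinite union of patches, and then the colimit lemma of the previous paragraph converts ``every finite piece is a graph'' into ``the whole thing has locally free $\pi_1$''. One should also be mildly careful about basepoints and connectedness, since a finite union of patches (and hence its nerve) need not be connected; one simply works with the component containing the basepoint throughout.
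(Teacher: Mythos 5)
Your proof is correct and supplies exactly the argument that the paper compresses into the single word ``Consequently.'' The three ingredients you add --- embedding $\pi_1(C)$ into $\pi_1$ of the \emph{entire} thin component $W$ via the homotopy equivalence $E_s\hookrightarrow E$ factoring through $W$, identifying $\pi_1(W)$ (via the nerve theorem) as a directed colimit of the fundamental groups of finite subcomplexes each homotopy equivalent to a graph, and the rank-stabilization plus Hopficity argument showing that a directed colimit of finitely generated free groups is locally free --- are all sound, and you are right to flag that the naive step (``the generators lie in a finite union, which is a graph'') only exhibits a finitely generated subgroup as a \emph{quotient} of a free group. The one small point of style: the case split on the existence of a rank-$3$ patch should be read per component (a rank-$3$ patch in a different component $E'$ tells you nothing about $E$), but this matches the paper's own convention in subsection~\ref{smallepsilon} and causes no actual trouble.
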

In order to show that $H^{n-2}(\partial\widetilde M;V)=0$, one would need to do the argument of this paper in cohomology also for {\it infinite} unions of small patches. 
Let $\Lambda\subset\pi_1M$ be the subgroup of the fundamental group preserving a component $E$ of the end. For large enough $t$, we can cover the cross section $\partial E_t$ by a finite union of $\Lambda$-orbits of patches $\cup_{i=1}^k\Lambda L_{x_i}$ in the end, so that $\partial E_t\subset\cup_{i=1}^k\Lambda L_{x_i}\subset E$. 
One problem with proving cohomology vanishing for such infinite unions is that there may not be minimal nilpotent groups. Baumslag-Solitar type relations (in the sense of \cite{shalenbs}) seem to be the enemy here, so it might be easier to prove cohomology vanishing with some sort of {\it(noBS)-condition} on the fundamental group. The simplest one is 
\begin{itemize}
\item
For any pair of nontrivial elements $x,y\in\pi_1M$,
if $xy^sx^{-1}=y^t$ then $s=\pm t$.
\end{itemize}
Conversely, in the presence of such relations it may well be that cohomology does not vanish. 
\subsection{Homological collapse without finiteness restrictions}
The finiteness conditions on $M$ (the topological tameness condition and the Riemannian finite volume condition) only appear in the proof through the rank $(n-1)$ Lemma. Thus, one is led to wonder whether there is a result about homological collapse in the thin part of the universal cover that is valid without any such finiteness restrictions. First, we note that our arguments give such a result if we {\it assume} that all the relevant nilpotent groups have rank $\leq n-2$. We state it now.

Let $M$ be a complete $n$-manifold of bounded nonpositive curvature $-1\leq K\leq 0$, $\Gamma$ its fundamental group and $\widetilde M$ its universal cover. Let $\Gamma_x(\varepsilon):=\left<\gamma\in\Gamma\mid d(x,\gamma x)<\varepsilon\right>$ be the group generated by isometries that move the point $x$ by less than $\varepsilon$. The Margulis lemma says there is a constant $\lambda_n>0$, depending only on the dimension $n$, so that $\Gamma_x(\varepsilon)$ is almost nilpotent whenever $\varepsilon\leq\lambda_n$. 
For any such $\varepsilon$ we can filter the universal cover by ranks of these groups, with $\widetilde M_{<\varepsilon}^{(r)}:=\{x\in\widetilde M\mid \mbox{ rank }\Gamma_x(\varepsilon)\geq r\}$ being the set of points where the group generated by $\varepsilon$-small isometries has rank at least $r$. Before, we've called set $\widetilde M^{(1)}_{<\varepsilon}$ the ($\Gamma,\varepsilon$)-thin part (or simply the $\varepsilon$-thin part) of the universal cover and denoted it by $\widetilde M_{<\varepsilon}$.

\begin{theorem}
\label{collapsetheorem}
Suppose that
\begin{itemize}
\item[($\star$):]\footnote{Another way to state this condition is $\widetilde M^{(n-1)}_{<\lambda_n}=\emptyset$.} the almost nilpotent groups $\Gamma_x(\lambda_n)$ have rank $\leq n-2$ for all $x\in\widetilde M$.   
\end{itemize}
Then, there is a constant $C_n$ with the following property: 
\begin{itemize}
\item
if $\varepsilon<\lambda_n/C_n$ and $d\geq n-1-r$, then any $d$-cycle  in $\widetilde M^{(r)}_{<\varepsilon}$ bounds in $\widetilde M_{<C_n\varepsilon}$. 
\end{itemize}
Moreover, we can find such a $C_n$ depending only on the dimension $n$.
\end{theorem}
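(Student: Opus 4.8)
The plan is to run the argument of Section~\ref{mainsection} essentially verbatim. The key observation is that hypothesis~($\star$) is precisely what makes the rank $n-1$ Lemma~\ref{rankn-1lemma} --- the only place in Section~\ref{mainsection} where topological tameness and finite volume enter --- vacuous, so it is never invoked; and that the role played in Section~\ref{mainsection} by ``lying in the thin part $\widetilde M_{<\mu}$ of a tame end'' is taken over by the scale bounds alone, since a sublevel set (or a minset) of isometries that move points by less than $\delta$ automatically lies in $\widetilde M_{<\delta}$. To fix the constant, let $C_n$ be the product of the finitely many scale-enlargement factors occurring in Section~\ref{mainsection}; each of these is a power of $I_n!$ times a power of $3^n$ (coming from passing to finite index nilpotent subgroups and from small central elements), so $C_n$ depends only on $n$ --- one may take, for instance, the value $(I_n!)^{2^{n-3}+2}3^n$ of subsection~\ref{smallepsilon}, which already works when $r=1$, with a smaller one sufficing for larger $r$. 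Since $\varepsilon<\lambda_n/C_n$, every patch, sublevel set and minset produced along the way sits at a scale $<\lambda_n$, so the Margulis Lemma~\ref{lll} applies to it, and it lies inside $\widetilde M_{<C_n\varepsilon}$.

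Now let $z$ be a $d$-cycle in $\widetilde M^{(r)}_{<\varepsilon}$ with $d\geq n-1-r$. First I would cover $z$ by $\varepsilon$-patches of rank $\geq r$: the deck action of $\pi_1M$ on $\widetilde M$ is properly discontinuous, so for each $x\in\widetilde M^{(r)}_{<\varepsilon}$ the set $S_x:=\{\gamma\in\pi_1M\mid d(x,\gamma x)<\varepsilon\}$ is finite, and $(S_x,L_x)$ is an $\varepsilon$-patch containing $x$ whose rank equals that of $\Gamma_x(\varepsilon)$, hence is $\geq r$ by the definition of $\widetilde M^{(r)}_{<\varepsilon}$. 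Since $\widetilde M^{(r)}_{<\varepsilon}$ is open and $z$ has compact image, finitely many such patches cover $z$. By ($\star$), $\Gamma_x(\lambda_n)$ has rank $\leq n-2$; since $\Gamma_x(\varepsilon)<\Gamma_x(\lambda_n)$ and the rank (Hirsch length) does not increase on passing to a subgroup, each of our patches has rank in $[r,n-2]$. In particular no rank $n-1$ patch ever arises, so Lemma~\ref{rankn-1lemma} is never invoked. Organize these patches into small clumps $\{Y_\alpha\}$ and big clumps $\{Y^z_\alpha\}$ as in subsections~\ref{small} and~\ref{big}.

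The two assembly steps now go through unchanged. Splitting $z$ along the overlap of the semisimple region $S=\bigcup_{L_{\rho}\mbox{ semisimple}}L_{\rho}$ and the parabolic region $P=\bigcup_{L_{\tau}\mbox{ parabolic}}L_{\tau}$ as in subsection~\ref{semisimplereduction1}, one writes $z=z_s+z_p$, where $z_p$ is carried by a union of parabolic $\varepsilon'$-patches of rank $\geq r$ (the overlaps $S\cap P$ being parabolic of rank $\geq r+1$); the unfolding-space construction of subsection~\ref{parabolicunfolding}, fed by the local vanishing Lemma~\ref{lvl} and the growing ranks Corollary~\ref{growingrank}, then shows $z_p$ bounds in a union of larger $\varepsilon''$-patches, which lies in $\widetilde M_{<C_n\varepsilon}$ because $\varepsilon''\leq C_n\varepsilon<\lambda_n$. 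For $z_s$, subsections~\ref{semisimplereduction1} and~\ref{semisimplereduction2} engulf it in $S^z$ and show it is homologous, inside $\widetilde M_{<C_n\varepsilon}$, to a cycle $c_m$ lying in a union of minsets $\bigcup M^1_i$ of rank $\geq r$; Proposition~\ref{almostabelianarrangements}, whose scale hypothesis $(I_n!)^{2^{n-2-r}}\varepsilon'<\mu$ now reads $(I_n!)^{2^{n-2-r}}\varepsilon'\leq C_n\varepsilon$ and holds by the choice of $C_n$, then shows $c_m$ bounds in $\bigcup M^{2^{n-2-r}}_i\subset\widetilde M_{<C_n\varepsilon}$. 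Combining, $z$ bounds in $\widetilde M_{<C_n\varepsilon}$.

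I expect the substance of the proof to be bookkeeping rather than a new idea. The point that needs care is verifying that ($\star$) is a faithful substitute for the rank $n-1$ Lemma --- this is exactly where one uses that the rank of a (virtually) nilpotent group does not increase on passing to a subgroup --- and, along with it, re-checking that every intermediate object created by the parabolic unfolding step and by the inductive minset-assembly step genuinely sits at a scale $<\lambda_n$, hence inside $\widetilde M_{<C_n\varepsilon}$, so that only the curvature bound $-1\leq K\leq 0$ (and not finiteness of volume) is used. Once this is done, $C_n$ is manifestly a function of $n$ alone, being built from $I_n$, $\lambda_n$ and $3^n$.
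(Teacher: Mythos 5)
Your proposal is correct and is exactly what the paper intends: the paper offers no separate proof of Theorem~\ref{collapsetheorem} but only the remark that the finiteness hypotheses enter only through the rank $n-1$ Lemma, so that under $(\star)$ ``our arguments give such a result.'' Your re-run of Section~\ref{mainsection} --- with $(\star)$ rendering Lemma~\ref{rankn-1lemma} vacuous, the Margulis scale $\lambda_n$ replacing $\mu$ as the ceiling on enlarged patch scales, and $C_n$ read off as the product of the finitely many $I_n!$- and $3^n$-enlargement factors --- is precisely that argument.
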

In particular, this says that 
if $n>2$ then any $(n-2)$-cycle in the $\varepsilon$-thin part of the universal cover bounds in the $C_n\varepsilon$-thin part. One 
is led to ask whether this result can be true if we allow some
rank $(n-1)$ nilpotent groups. This appears to be unknown even in the $n=3$ case. 
\begin{question}
Suppose $\widetilde M$ is a complete, simply connected, Riemannian $3$-manifold of bounded nonpositive curvature $-1\leq K\leq 0$. Let $\Gamma$ be a torsionfree, discrete group of isometries of $\widetilde M$. 
Is there a constant $C_3$ such that, for any $\varepsilon<\lambda_3/C_3$ a loop in the $(\Gamma,\varepsilon)$-thin part bounds in the $(\Gamma,C_3\varepsilon)$-thin part? If yes, can one pick a single constant $C_3$ that works for all such $3$-manifolds?
\end{question}

\subsection{Dimension five} In dimension $\geq 6$ it is easy to give examples where the boundaries are not aspherical. For instance, the cartesian product of three punctured tori is homeomorphic to a finite volume, complete, $6$-dimensional Riemannian manifold of bounded nonpositive curvature whose boundary is not aspherical. However, in dimension five there is still a chance that the boundary is aspherical. 
\begin{question}
Suppose that $M$ is a $5$-dimensional, complete, finite volume Riemannian manifold of bounded nonpositive curvature. If $M$ is tame, is $\partial M$ aspherical?
\end{question} 
\subsection{Lower bounds on homology}
In all known examples of complete, finite volume Riemannian manifolds of bounded nonpositive curvature, the reduced homology of $\partial\widetilde M$ is concentrated in a single dimension, and consequently the fundamental group $\pi_1M$ is a duality group. It would be interesting to either give examples where this is not the case (the manifold studied by Buyalo in \cite{buyalo} would be an example if one could show that the fundamental group of the boundary does not surject onto the fundamental group of the interior) or to find some mechanism that leads to vanishing of homology of $\partial\widetilde M$ in low dimensions and gives some concentration. 
In the arguments of this paper, three different scales are relevant.
\begin{itemize}
\item
(Patch scale): The patches are sublevel sets so they are convex. 
\item
(Clump scale): The largest scale on which one deals with a single nilpotent group.
\item
(Finite scale): This is where one has a finite union of clumps. 
\end{itemize}
If one finds constraints on the topology of a maximal clump beyond the local vanishing lemma, one can try to assemble them into a statement about 
the end via a Mayer-Vietoris argument.
\subsection{Abelian arrangements of minsets} A related question is to understand what conditions on an abelian arrangement of minsets give lower bounds on homology. 

\begin{question}
Suppose $\{A_i\}$ is a collection of abelian groups of semisimple isometries on $\widetilde M$, and that $A_i$ commutes with $A_j$ whenever $Min(A_i)$ intersects $Min(A_j)$. When is the union $\cup_i Min(A_i)$ homotopy equivalent to a wedge of spheres $\vee S^k$ concentrated in a single dimension? 
\end{question}
\subsection{Relation to other types of collapse}
In spirit, the homological collapse of this paper has many common themes with the metric collapse of $F$-structures \cite{cheegergromovcollapse1,cheegergromovcollapse2} and the Cheeger-Fukaya-Gromov theory of nilpotent structures \cite{cheegerfukayagromov}. It might be interesting to figure out if there is some direct relation, and whether the metric collapse theories give additional topological information about the end in the presence of nonpositive curvature. 
\subsection{Can the thick part be homotoped into the end?}
It would be very interesting if there is some topological obstruction arising from bounded nonpositive curvature that prevents the thick part from being homotoped into the end. For instance, one might hope that the patches on the end can be assembled into some sort of structure that cannot exist on the thick part for topological reasons. 
\begin{question}
Let $M$ be a complete, finite volume manifold of bounded nonpositive curvature. Suppose $M$ is tame. Is there a homotopy $h_t:M\ra M$ with $h_0=id_M$ and $h_1(M)\subset\partial M$?\footnote{Equivalently, does the inclusion $\partial M\hookrightarrow M$ have a homotopy section?}
\end{question}
\begin{remark}
For a closed higher genus surface $\Sigma$ the product $\Sigma\times\mathbb R$ has a complete, finite volume, negatively curved Riemannian metric \cite{nguyenphanfinite}, so the two-sided curvature bound is essential here. 
\end{remark}
\subsection{Infinite topological type}
One can ask for a version of Corollary \ref{indecomposable} if we drop tameness but retain the finite volume assumption.
\begin{question}
Let $n\geq 3$. Suppose $\pi$ is the fundamental group of a complete, finite volume Riemannian $n$-manifold of bounded nonpositive curvature. Is $\pi$ freely indecomposable?
\end{question}


\bibliographystyle{amsplain}
\bibliography{ends}

\end{document}